\numberwithin{equation}{section}
\newtheorem{thm}{Theorem}
\numberwithin{thm}{section}
\newtheorem{prop}[thm]{Proposition}
\newtheorem{lemma}[thm]{Lemma}
\newtheorem{corollary}[thm]{Corollary}
\newtheorem{cor}[thm]{Corollary}
\newtheorem{example}[thm]{Example}
\newtheorem{remark}[thm]{Remark}
\newtheorem{definition}[thm]{Definition}
\newcounter{FNC}[page]
\def\fauxfootnote#1{{\addtocounter{FNC}{2}$^\fnsymbol{FNC}$%
     \let\thefootnote\relax\footnotetext{$^\fnsymbol{FNC}$#1}}}
\newcommand{\Q}{\mathbb{Q}}
\newcommand{\R}{\mathbb{R}}
\newcommand{\Z}{\mathbb{Z}}
\renewcommand{\P}{\mathbb{P}}
\newcommand{\A}{{\mathcal A}}
\newcommand{\rk}{{\rm rk}}
\DeclareMathOperator{\degree}{deg}
\DeclareMathOperator{\sign}{sign}
\DeclareMathOperator{\signvar}{signvar}
\DeclareMathOperator{\ch}{{\rm chull}}
\DeclareMathOperator{\I}{I}
\DeclareMathOperator{\supp}{supp}
\title[Sign conditions for at least one positive solution]{Sign conditions for the existence 
of at least one positive solution of a sparse polynomial system} 
\author{Fr\'ed\'eric Bihan}
\address{Laboratoire de Math\'ematiques\\
         Universit\'e Savoie Mont Blanc\\
         73376 Le Bourget-du-Lac Cedex\\
         France}
\email{Frederic.Bihan@univ-savoie.fr}
\urladdr{http://www.lama.univ-savoie.fr/~bihan/}
\author{Alicia Dickenstein}
\address{Dto.\ de Matem\'atica, FCEN, Universidad de Buenos Aires, and IMAS (UBA-CONICET), Ciudad Universitaria, Pab.\ I, 
C1428EGA Buenos Aires, Argentina}
\email{alidick@dm.uba.ar}
\urladdr{http://mate.dm.uba.ar/~alidick}
\author{Magal\'{\i} Giaroli}
\address{Dto.\ de Matem\'atica, FCEN, Universidad de Buenos Aires, and IMAS (UBA-CONICET), Ciudad Universitaria, Pab.\ I, 
C1428EGA Buenos Aires, Argentina}
\email{mgiaroli@dm.uba.ar}
\thanks{FB is partially supported by the grant ANR-18-CE40-0009 of Agence Nationale de Recherche, France. AD and MG are partially supported by UBACYT 20020170100048BA, 
CONICET PIP 11220150100473, and ANPCyT PICT 2016-0398, Argentina.}
\begin{document}

\begin{abstract}
We give sign conditions on the support and coefficients of a sparse system of $d$ 
generalized polynomials in $d$ variables that guarantee the existence of at least one positive real root, 
based on degree theory and Gale duality. In the case of integer
exponents, we relate our sufficient conditions to
 algebraic
 conditions that emerged in the study of toric ideals.
  \end{abstract}

  \keywords{Sparse polynomial system, positive solution, degree theory, Gale duality, dominating matrix, real solution}
\maketitle

\section{Introduction}\label{sec:intro}

Deciding whether a real polynomial system has a positive solution is a basic
question, that is decidable via effective elimination of quantifiers~\cite{BPR}. 
There are few results on lower bounds on the number of real or positive roots of polynomial  
systems (see e.g.~\cite{BSS,soprunova,SottileBook,wang}). 
In this paper, we  consider generalized polynomial systems, that is, polynomials with real exponents, for
which the positive solutions are well defined. We give sign conditions on the support and on 
the coefficients of a sparse system of $d$  generalized polynomials in $d$ variables that guarantee the existence 
of at least one positive real root, based on degree theory and Gale duality.

We fix an exponent set $\A=\{a_1,\ldots,a_{n}\} \subset \R^d$ of cardinality
$n$ and for any given real matrix $C =(c_{ij}) \in \R^{d \times n }$ we consider the associated sparse generalized
 multivariate polynomial system in $d$ variables $x=(x_1, \dots, x_d)$ with support $\A$:
\begin{equation}\label{E:system}
f_i(x)=\sum_{j=1}^{n} c_{ij}x^{a_j} = 0 \, , \quad i=1,\ldots,d.
\end{equation}
We are interested by the existence of the positive solutions 
of~\eqref{E:system} in  $\R_{>0}^d$. 
Denoting by $n_\A(C)$ the (possibly infinite) number of positive real solutions  
of the system~\eqref{E:system}, our main goal is to give sufficient conditions on the exponent set $\A$ and the
coefficient matrix $C$  that ensure 
$n_{\mathcal{A}}(C)>0$.

 When $\A \subset \Z^d$ we  consider the existence of solutions in
the real torus $(\R^*)^d$ of points in $\R^d$ with nonzero coordinates, 
and we relate our conditions to well-studied algebraic properties of lattice ideals associated with the configuration
$\A$.

In applications, for example, in the context of chemical reaction networks, lower bounds of 
positive roots of polynomial systems guarantee the existence of (stoichiometrically compatible) 
positive steady states. In~\cite{MFRCSD}, sign conditions are used to decide if 
a family of polynomial systems associated with a given reaction network cannot admit 
more than one positive solution for any choice of the parameters and, in this case, 
conditions for the existence of one positive solution
are given as a corollary of a result from~\cite{MR}, based on degree theory.
Our point of view of searching for conditions on the exponent and the coefficient matrices of the system 
comes from this paper. As we do not assume
injectivity (at most one root), we cannot use tools from these papers or the
more recent article~\cite{MHR}, as Hadamard's theorem.

  In~\cite{CFMW} the authors use degree theory in the study of chemical reaction
  networks to describe parameters for which there is a single positive solution or for which 
  there are more (this is called \emph{multistationarity}).  We apply some of these techniques
   in a Gale duality setting, more precisely, based on Theorem~\ref{th:degree}, 
which is version of a particular case of Theorem~$2$ in the Supplementary Information of~\cite{CFMW}.

We can use different convex sets to 
apply Theorem~\ref{th:degree}. The first one which comes in mind is the positive orthant, which is not bounded.
Another natural idea is to consider the Newton polytope of the polynomials  in the system, or some dilates of it. 
This is reasonable since it is completely determined by the \emph{monomials} appearing in the system. 
In this paper, we use another convex polytope which seems natural since it is determined by \emph{the coefficients}
of the system. We define this polytope (called $\Delta_P$)  in \S~\ref{ssec:deltap}
using the Gale duality trick for polynomial systems that was
studied by Bihan and Sottile in \cite{BS07}, see also \cite{BS08}. We can think of $\Delta_P$ as 
a ``shadow" of the positive orthant via Gale duality, which has the advantage that it can be chosen to be bounded.

In the case of a generalized polynomial $f$ in one variable, 
 the existence of at least one positive real root based on degree theory is just
 the condition that the coefficients of the smallest exponent and the biggest
 exponent in $f$ have opposite signs. In case both coefficients have the same
 sign, this application is silent and other methods, in general ad hoc, are required
 to ensure a positive root. 
 In our case, this is similar. We explicitly show in Example~\ref{ex:circuit} that when we restrict our results to the case
when $\A \subset \R^n$ is a circuit, our sufficient conditions to have
 $n_{\mathcal A}(C)>0$  are indeed equivalent to the fact that the two extremal coefficients in the Descartes type sign variation bound
for $n_\A(C)$  in~\cite{BD} have opposite signs.
 In the general case, we get interesting and more involved sufficient
 sign conditions relating the associated exponent matrix $A$ in~\eqref{eq:overlineA} and the coefficient matrix $C$, 
 but these conditions are not necessary and other tools are needed
 when they do not hold. 

In Section~\ref{sec:back} we recall the notion of Gale duality and the basic 
duality of solutions (see Theorem~\ref{Galesystems}), and we introduce useful notation 
as well as the necessary condition~\eqref{eq:nonempty} of the existence of a positive vector
in ${\rm ker}(C)$ for $n_A(C)$ being positive, which only depends on the signs of the maximal minors of $C$ 
(that is, on its associated {\it oriented matroid}). 

In Section~\ref{sec:main}, we recall the basic concepts of degree theory and we present our main result
Theorem~\ref{basicmainresult}, which gives 
conditions on the Gale duality side to guarantee the existence of positive solutions. 
This result has some technical hypotheses that relate the lattice of faces of $\Delta_P$ plus signs of entries in a Gale dual matrix 
of the coefficient matrix $C$ of the system,  with sign conditions on a Gale dual of the matrix $A$ of exponents. 
We present Example~\ref{example:3positivesolutions} 
to clarify our definitions and to show that the sufficient conditions in Theorem~\ref{basicmainresult}
could imply the existence of more than just one positive solution.

In the following sections we give sufficient conditions on the support and the matrix of coefficients 
that ensure that Theorem~\ref{basicmainresult} can be applied. In Section~\ref{sec:alg}, we  consider 
the notion of mixed dominating matrices from~\cite{FS} (see Definition~\ref{def:dominating})
 to get Theorem~\ref{th:BdominatingCsamesign}. 
Under the hypotheses of this result, we show that $n_A(C)>0$ whenever the vectors of signs of a basis of 
 ${\rm ker}(A)$, can be realized as vectors of signs of elements in ${\rm ker}(C)$, which is
also a condition only depending on the associated oriented matroid of $C$.  
In Section~\ref{sec:geom}, we give geometric conditions  on $\A$ and $C$
 that guarantee  that the hypotheses of Theorem~\ref{basicmainresult} are satisfied. 
 We introduce for this the notion of $I$-compatibility in Definition~ \ref{def:compatible},
which is clarified in Example~\ref{ex:Icompatible} and Figure~\ref{fig:Icompatible}.
 Based on results from~\cite{FS2}, we get Theorem~\ref{th:Icompatible} which guarantees
 $n_A(C)>0$ in terms of the relative combinatorial positions of the configuration of exponents and the
 configuration of the points given by the columns of $C$.

 In the last Section~\ref{sec:integer}, we concentrate our study on 
 integer configurations $\A$. We relate the dominance conditions to algebraic
 conditions that emerged in the study of toric ideals, and we naturally extend
 in this case our approach to ensure the existence of solutions in the real torus $(\R^*)^d$.
 Our last result is Theorem~\ref{thm:positive impliesreal}, which shows which sign conditions can be
 attained by the corresponding Gale dual system~\eqref{homogeneous Gale system}.

\section{Gale duality for positive solutions of polynomial systems}\label{sec:back}

We first present basic definitions and results on Gale duality. 
Given a matrix $M\in {\R^{r\times s}}$ of maximal rank $r$, a \emph{Gale dual matrix} of $M$
 is any matrix $N \in \R^{s \times (s-r)}$ of
maximal rank whose columns vectors are a basis of the kernel of $M$.
Clearly a Gale dual matrix is not unique as it corresponds to a choice of a basis: it is unique up to right multiplication
by an invertible $(s-r)\times (s-r)$-matrix.  
We will also say that the  $s$ row vectors of $N$ define a Gale dual configuration in $\R^{s-r
}$ to the configuration in $\R^r$ defined by the $s$ column vectors of $M$.
We will introduce a Gale dual system~\eqref{homogeneous Gale system}
and polyhedra $\Delta_P$ \eqref{eq:deltaP}, depending on the choice of a Gale dual matrix to the coefficient
matrix $C$. We will then recall Theorem~\ref{Galesystems}, which gives a fundamental link
between the positive real roots of system \eqref{E:system} and the solutions in $\Delta_P$ of the 
Gale dual system~\eqref{homogeneous Gale system}.

\subsection{Matrices and their Gale duals}\label{ssec:matrices}
Let $\A=\{a_1,\ldots,a_{n}\} $ be a finite subset of $\R^d$ of cardinality
$n$ and $C =(c_{ij}) \in \R^{d \times n }$. As we mentioned in the introduction,  
we are interested in the solvability of the associated sparse generalized
 multivariate polynomial system~\eqref{E:system} in $d$ variables 
 $x=(x_1, \dots, x_d)$ with support $\A$ and coefficient matrix $C$.

Note that if we multiply each equation of system~\eqref{E:system} by a  
monomial (i.e, we translate the configuration $\mathcal{A}$), the number of 
positive real solutions does not change, and then $n_\A(C)$ is an 
affine invariant of the point configuration $\mathcal{A}$. It 
is then natural to consider the matrix $A\in\R^{(d+1)\times n}$ 
with columns $(1,a_1),(1,a_2),\ldots,(1,a_{n})\in\R^{d+1}:$ 
\begin{equation}\label{eq:overlineA} A=\begin{pmatrix}
1 & \dots & 1\\
a_1 & \dots & a_{n}
\end{pmatrix}.
\end{equation}
We will refer to the matrix $A$ as the corresponding matrix of the point configuration $\mathcal{A}$. 

 We will always assume that $C$ is of maximal rank $d$ and $A$ is of maximal rank $d+1$.
 Then, we need to have  $n \ge d+1$. If equality holds,  it is easy to see that 
 system~\eqref{E:system} has a positive
 solution if and only if the necessary condition~\eqref{eq:nonempty} holds.
 So we will suppose that $n\ge d+2$.
 
We denote by $k=n-d-1$ the codimension of $A$ (and of $\A$).  Note that the codimension of $C$ equals
$k+1$. Let $B=(b_{ij})\in\R^{n\times k}$ be a matrix which is Gale dual to $A$, and let 
$D=(d_{ij})\in\R^{n \times (k+1)}$ be any a matrix which is Gale dual to $C$. 
We will number the \emph{columns} of $B$ from 1 to $k$ and the \emph{columns} of $D$ from $0$ to $k$ and
denote by $P_1, \dots, P_{n} \in \R^{k+1}$ the \emph{row} vectors of $D$, that is, the Gale dual configuration to
the columns of $C$.

\subsection{A necessary condition}\label{ssec:neccond}
There is a basic necessary condition for $n_\A(C)$ to be positive.
Denote by $C_1, \dots, C_{n} \in \R^d$  the \emph{column} vectors of the coefficient matrix $C$ and call
\begin{equation}
{\mathcal C}^\circ = \R_{>0} C_1 + \dots + \R_{>0} C_{n},
\end{equation}
the positive cone generated by them.
Given a solution $x \in \R_{>0}^d$ of system \eqref{E:system},  
the vector $(x^{a_1}, \dots, x^{a_{n}})$  is positive and so
the origin ${\bf 0} \in \R^d$ belongs to $\mathcal C^\circ$.
Then, necessarily
\begin{equation}\label{eq:nonempty}
{\bf 0} \in {\mathcal C}^\circ.
\end{equation}
 It is a well-known result that  Condition~\eqref{eq:nonempty} holds if and only if the vectors
 $P_1, \dots, P_{n}$ lie in an open halfspace through the origin. 
 Indeed, any vector in the kernel of $C$ is of the form $(\langle P_1, u \rangle, \dots, \langle P_n, u\rangle)$,
  and so there exists a positive vector in the kernel if and only there exists a vector $u\in \R^{k+1}$
   such that $\langle P_i, u\rangle >0$ for any $i$.
 Note that Condition~\eqref{eq:nonempty}, 
 together with the hypothesis that $C$ is of maximal rank $d$, is equivalent to ${\mathcal C}^\circ 
=\R^d$.

\subsection{Defining cones and polytopes in Gale dual space}\label{ssec:deltap}
We also define other cones that we will use. Denote by
\begin{equation}\label{eq:poscone}
{\mathcal C}_P=\R_{>0}P_1+\cdots+\R_{>0}P_{n},
\end{equation}
the positive cone generated by the rows of a Gale dual matrix $D$ and let 
\begin{equation}\label{eq:Cnu}
{\mathcal C}_P^{\nu}=\{y \in \R^{k+1} \, : \, \langle P_i,y \rangle >0, i=1,\ldots, n\},
\end{equation}
 be its dual open cone. Note that if $C$ has maximal rank $d$ and Condition~\eqref{eq:nonempty} holds, the cone
${\mathcal C}_P$ is strictly convex. Therefore, its dual open cone ${\mathcal C}_P^{\nu}$ 
 is a nonempty full dimensional open convex cone.  We will also consider the closed cone 
\begin{equation}\label{eq:overlineCP}
\overline{{\mathcal C}}_P=\R_{ \geq 0}P_1+\cdots+\R_{\geq 0}P_{n}.
\end{equation}
 
The following Lemma is straightforward.
\begin{lemma}\label{lemma: can be made bounded}
Assume that $C$ has maximal rank $d$ and that ${\bf 0}\in\mathcal C^\circ$. Then for any nonzero
$u \in \overline{{\mathcal C}}_P$ and any $c\in\R_{>0}$, the polytope
${\mathcal C}_P^{\nu} \cap \{y \in \R^{k+1}\, : \, \langle u,y \rangle =c\}$ has dimension $k$. 
Moreover, this polytope is bounded if and only $u \in {\mathcal C}_P$.
\end{lemma}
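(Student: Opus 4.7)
The plan is to decouple the two claims and handle each separately. For the dimension, I would observe that $\mathcal{C}_P^{\nu}$ is open in $\R^{k+1}$ (a finite intersection of open halfspaces), so any affine hyperplane $H=\{y:\langle u,y\rangle=c\}$ that meets $\mathcal{C}_P^{\nu}$ meets it in a relatively open, hence $k$-dimensional, subset of $H$. To see that $\mathcal{C}_P^{\nu}\cap H$ is nonempty, I write $u=\sum_{i=1}^n \lambda_i P_i$ with $\lambda_i\geq 0$ and not all zero (possible since $u\in\overline{\mathcal{C}}_P\setminus\{0\}$) and pick any $y_0\in\mathcal{C}_P^{\nu}$ (nonempty by the discussion preceding the lemma). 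Then $\langle u,y_0\rangle=\sum_i \lambda_i\langle P_i,y_0\rangle>0$, and rescaling $y_0$ by $c/\langle u,y_0\rangle$ produces a point in $\mathcal{C}_P^{\nu}\cap H$.

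For the boundedness equivalence I would pass to the closed polyhedron $Q=\{y:\langle P_i,y\rangle\geq 0 \text{ for all } i,\ \langle u,y\rangle=c\}$, which (using the same rescaling argument to approximate boundary points) is the closure of $\mathcal{C}_P^{\nu}\cap H$; boundedness of the open polytope is equivalent to boundedness of $Q$, equivalent to triviality of the recession cone $Q_\infty=\{y:\langle P_i,y\rangle\geq 0,\ \langle u,y\rangle=0\}$. For the direction $u\in\mathcal{C}_P\Rightarrow$ bounded, choose coefficients $\lambda_i>0$ with $u=\sum\lambda_i P_i$. Then for $y\in Q_\infty$, the identity $0=\langle u,y\rangle=\sum\lambda_i\langle P_i,y\rangle$ is a sum of nonnegative terms with strictly positive weights, forcing $\langle P_i,y\rangle=0$ for every $i$; since the rows $P_1,\ldots,P_n$ of $D$ span $\R^{k+1}$ (the Gale dual matrix $D$ has full column rank $k+1$), this yields $y=0$.

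The converse direction is the only place I expect a subtlety. Under the standing hypotheses, $\overline{\mathcal{C}}_P$ is a full-dimensional closed convex pointed cone, and $\mathcal{C}_P$ coincides with its topological interior. Hence $u\in\overline{\mathcal{C}}_P\setminus\mathcal{C}_P$ is a boundary point of $\overline{\mathcal{C}}_P$, and the supporting hyperplane theorem produces a nonzero linear form that vanishes at $u$ and is nonnegative on $\overline{\mathcal{C}}_P$; that is, a nonzero $y\in\overline{\mathcal{C}_P^{\nu}}$ with $\langle u,y\rangle=0$, which is a nontrivial recession direction of $Q$, so $Q$ is unbounded. The step to state cleanly is the identification of $\mathcal{C}_P$ with the interior of $\overline{\mathcal{C}}_P$ and the dual characterization of interior points via the polar cone $\overline{\mathcal{C}_P^{\nu}}$; everything else is routine.
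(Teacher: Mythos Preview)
Your argument is correct. The paper itself does not give a proof of this lemma; it simply declares it ``straightforward'' and moves on. Your write-up supplies exactly the details one would expect: nonemptiness of $\mathcal{C}_P^{\nu}\cap H$ via rescaling a point of $\mathcal{C}_P^{\nu}$, the dimension claim from openness of $\mathcal{C}_P^{\nu}$, and the boundedness equivalence via the recession cone of the closed polyhedron together with the identification $\mathcal{C}_P=\operatorname{int}(\overline{\mathcal{C}}_P)$ (valid here because the $P_i$ span $\R^{k+1}$, so the linear map $\R_{>0}^n\to\R^{k+1}$ is open).

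One minor comment: the parenthetical ``using the same rescaling argument to approximate boundary points'' is not quite the right justification for $Q=\overline{\mathcal{C}_P^{\nu}\cap H}$, since rescaling moves you off the hyperplane $\langle u,y\rangle=c$. What actually works is convexity: for $p\in\mathcal{C}_P^{\nu}\cap H$ and $q\in Q$, the segment $(1-t)q+tp$ lies in $\mathcal{C}_P^{\nu}\cap H$ for $t\in(0,1]$. Alternatively, you do not even need the closure identification: once you have a nontrivial recession direction $y\in Q_\infty$, the ray $p+ty$ (with $p\in\mathcal{C}_P^{\nu}\cap H$) already lies in $\mathcal{C}_P^{\nu}\cap H$, giving unboundedness directly. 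Either way the conclusion stands.
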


Define
\begin{equation}\label{eq:deltaP}\Delta_P={\mathcal C}_P^{\nu} \cap \{y \in \R^{k+1}\, : \, y_0=1\}.
\end{equation}

\begin{cor}\label{bounded}
Assume that $C$ has maximal rank, ${\bf 0}\in\mathcal C^\circ$ and let $D$ be a Gale dual matrix of $C$. Then
$(1,0,\ldots,0)\in {\mathcal C}_{P}$  if and only if $\Delta_P$ has dimension $k$ and is bounded.
\end{cor}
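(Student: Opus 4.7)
The plan is to apply Lemma~\ref{lemma: can be made bounded} with $u=(1,0,\ldots,0)$ and $c=1$, so that the polytope appearing in that lemma is exactly $\Delta_P$. The ``if'' direction then comes for free: if $(1,0,\ldots,0)\in\mathcal C_P$, then it is a nonzero element of $\overline{\mathcal C}_P$, so the lemma yields both $\dim\Delta_P=k$ and boundedness.

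For the converse I would use convex duality. Since $D$ has rank $k+1$, its rows $P_1,\ldots,P_n$ span $\R^{k+1}$, so $\overline{\mathcal C}_P$ is full-dimensional with topological interior equal to $\mathcal C_P$, and by the bipolar theorem the polar dual of $\overline{\mathcal C}_P$ coincides with the closure of $\mathcal C_P^\nu$. Consequently, the membership $(1,0,\ldots,0)\in\mathcal C_P$ is equivalent to the assertion that $y_0>0$ for every nonzero $y$ in the closure of $\mathcal C_P^\nu$, and the task reduces to deriving this assertion from the hypotheses that $\Delta_P$ has dimension $k$ and is bounded.

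To do so, I would argue by contradiction: pick $y^*\in\Delta_P$, which is nonempty because $\dim\Delta_P=k$, and suppose there is a nonzero $y$ in the closure of $\mathcal C_P^\nu$ with $y_0\le 0$. Since $\langle P_i, y^*\rangle>0$ and $\langle P_i, y\rangle\ge 0$ for every $i$, one has $y^*+ty\in\mathcal C_P^\nu$ for every $t\ge 0$. If $y_0=0$, this is a ray contained in $\Delta_P$; if $y_0<0$, rescaling by $1/(1+ty_0)$ for $t\in[0,-1/y_0)$ keeps the points in $\Delta_P$ and drives their norms to infinity as $t\to -1/y_0$. Either possibility contradicts the assumed boundedness. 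The main technical nuisance will be the case $y_0<0$: to guarantee the rescaled points really escape to infinity one must observe that $y^*$ cannot be a negative scalar multiple of $y$, which holds because $\langle P_i,y^*\rangle>0$ for every $i$ while any negative multiple of $y$ would yield $\langle P_i,\cdot\rangle\le 0$.
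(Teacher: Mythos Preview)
Your proposal is correct and matches the paper's intent: the corollary is stated in the paper without proof, as an immediate consequence of Lemma~\ref{lemma: can be made bounded} with $u=(1,0,\ldots,0)$ and $c=1$, which is exactly your forward direction. For the converse you supply a short convex-duality argument; this is appropriate, since the lemma as stated applies only once one already knows $(1,0,\ldots,0)\in\overline{\mathcal C}_P$, and your argument cleanly handles the remaining case $(1,0,\ldots,0)\notin\overline{\mathcal C}_P$ that the paper leaves implicit.
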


We next show that we can always find 
a Gale matrix $D$ such that $\Delta_P$ is nonempty and bounded.

\begin{lemma}\label{lemma:inCp}
	Assume that $C$ has maximal rank.
%
	Then there is a Gale dual matrix $D$ of $C$ such that $(1,0,\ldots,0)\in {\mathcal C}_{P}$.
\end{lemma}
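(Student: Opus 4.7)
The plan is to exploit the freedom in the choice of a Gale dual matrix of $C$. Concretely, if $D_0$ is any Gale dual of $C$ (which exists because $C$ has rank $d$), then every Gale dual of $C$ is of the form $D = D_0 M$ for some invertible $M \in \R^{(k+1) \times (k+1)}$. Writing $P_1^{(0)}, \ldots, P_n^{(0)}$ for the rows of $D_0$ and $P_i = P_i^{(0)} M$ for the rows of $D$, the cone ${\mathcal C}_P$ is the image of ${\mathcal C}_{P^{(0)}}$ under the invertible linear map $x \mapsto xM$. Thus the task reduces to exhibiting a nonzero vector $v \in {\mathcal C}_{P^{(0)}}$ together with an invertible $M$ satisfying $vM = (1, 0, \ldots, 0)$: then $D = D_0 M$ will be a Gale dual of $C$ with $(1, 0, \ldots, 0) = vM \in {\mathcal C}_{P^{(0)}} M = {\mathcal C}_P$.

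To produce such a $v$, I would use that $D_0$ has maximal rank $k+1$ (since $C$ has maximal rank $d$), so the rows $P_1^{(0)}, \ldots, P_n^{(0)}$ span $\R^{k+1}$ as a vector space. Equivalently, the linear map $(\lambda_1, \ldots, \lambda_n) \mapsto \sum_{i=1}^n \lambda_i P_i^{(0)}$ from $\R^n$ to $\R^{k+1}$ is surjective, hence open. The image of the open set $\R_{>0}^n$ is therefore a nonempty open subset of $\R^{k+1}$, which cannot be contained in $\{0\}$. Any nonzero vector in this image provides the desired $v \in {\mathcal C}_{P^{(0)}}$.

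Finally, since $v \ne 0$, I would extend $v$ to a basis $v = v_1, v_2, \ldots, v_{k+1}$ of $\R^{k+1}$ and define $M$ to be the unique invertible matrix with $v_1 M = (1, 0, \ldots, 0)$ and $v_i M = e_i$ for $i = 2, \ldots, k+1$. This completes the construction. I do not foresee any substantial obstacle: the main observation is that the Gale freedom amounts to an arbitrary invertible linear change of coordinates applied to the row configuration, and that the positive cone generated by the rows of any Gale dual matrix of a maximal rank $C$ is automatically full-dimensional, from which a nonzero element is produced by a routine open map argument.
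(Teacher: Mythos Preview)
Your argument is correct and follows essentially the same route as the paper: start with an arbitrary Gale dual, pick a nonzero vector $v$ in the associated positive cone, and right-multiply by an invertible matrix sending $v$ to $(1,0,\ldots,0)$. The only difference is that the paper simply writes ``pick any vector $u\in\mathcal C_P$'' without justifying that $\mathcal C_P$ contains a nonzero element, whereas you supply this via the open-map argument; this extra care is fine but not a genuinely different method.
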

\begin{proof}
	Start with any Gale dual matrix $D$ of $C$ and pick any vector $u \in {\mathcal C}_{P}$.
	Then there is an invertible matrix $R \in \R^{(k+1) \times (k+1)}$ such that $u \cdot R =(1,0,\ldots,0)$, where $u$ is 
written as a row vector.
	Consider the matrix $D'=DR$ and denote by $P_1',\ldots,P_{n}'$ its row vectors.
	Then $D'$ is Gale dual to $C$, and $(1,0,..,0) \in {\mathcal C}_{P'}=\R_{>0}P'_1+\cdots+\R_{>0}P'_{n}$. 
\end{proof}

To any choice of Gale dual matrices $B$ and $D$ of $A$ and $C$ respectively, we associate the following system
with unknowns $y=(y_0,\ldots,y_k)$:
\begin{equation}\label{homogeneous Gale system}
\prod_{i=1}^{n} \langle P_i, y \rangle^{b_{ij}}=1,\; j=1,\ldots,k,
\end{equation}
which is called a \emph{Gale dual system} of \eqref{E:system}. Denote $G_j(y)=\prod_{i=1}^{n} \langle P_i, y \rangle^{b_{ij}}$.
Another choice $D'$ of a Gale dual matrix for $C$ corresponds to another choice $y'$ of linear coordinates for $\R^{k+1}$:
if $D'= D R$ with $R \in \R^{(k+1)\times(k+1)}$ invertible, then setting $y'=R^{-1}(y)$ 
we get $D' y'=D y$, where $y$ and $y'$ as considered as column vectors.
Another choice $B'$ of a Gale dual matrix for $A$ gives an equivalent Gale system $H_1=\dots=H_k=1$, where for each $j$  there
exist exponents $(\mu_1, \dots, \mu_k)$ such that $H_j = G_1^{\mu_1} \dots G_k^{\mu_k}$.

Note that \eqref{homogeneous Gale system} is homogeneous of degree zero since the columns of $B$ sum up to zero.
For any cone ${\mathcal C} \in \R^n$ with apex the origin, its projectivization $\P{\mathcal C}$ is the quotient space ${\mathcal C} / \sim$
under the equivalence relation $\sim$ defined by: for all $y,y' \in {\mathcal C}$, 
we have  $y \sim y'$ if and only if there exists $\alpha>0$
such that $y=\alpha y'$.

We will often use the following observation.

\begin{remark}\label{proj}
If $(1,0,\ldots,0) \in \overline{{\mathcal C}}_P$, then ${\mathcal C}_P^{\nu}$ is contained
 in the open half-space defined by $y_0>0$. Thus, the map
$(y_0,y_1,\ldots,y_k) \mapsto (1,y_1/y_0,\ldots,y_k/y_0)$ induces a bijection between $\P{\mathcal C}_P^{\nu}$ and $\Delta_P$.
\end{remark}

\subsection{The equivalence of solutions}\label{ssec:eqsol}
Here is a slight variation of  Theorem 2.2 in~\cite{BS07}.
\begin{thm}\label{Galesystems}
There is a bijection between the positive solutions of the initial system~\eqref{E:system}
and the solutions of the Gale dual system~\eqref{homogeneous Gale system} in $\P{\mathcal C}_P^{\nu}$,
 which induces a bijection between the positive solutions of
~\eqref{E:system} and the solutions of~\eqref{homogeneous Gale system} in $\Delta_P$ 
when $(1,0,\ldots,0) \in \overline{{\mathcal C}}_P$.
\end{thm}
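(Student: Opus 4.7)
The plan is to construct explicit inverse maps between the two sets of solutions, exploiting the duality identifications $\ker C = \operatorname{image}(D)$ and $\ker A = \operatorname{image}(B)$, together with the correspondence between real exponentials and row spans of $A$.

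First I would build the forward map from positive solutions of~\eqref{E:system} to $\P{\mathcal C}_P^\nu$. Given $x \in \R_{>0}^d$ with $f_i(x)=0$ for all $i$, the vector $v:=(x^{a_1},\ldots,x^{a_n}) \in \R_{>0}^n$ satisfies $Cv=0$. Since the columns of $D$ form a basis of $\ker C$, there is a unique $y \in \R^{k+1}$ with $Dy=v$, i.e.\ $\langle P_j,y\rangle = x^{a_j}$ for every $j$. Positivity of $x^{a_j}$ places $y$ in ${\mathcal C}_P^{\nu}$, and I would send $x$ to the class $[y] \in \P{\mathcal C}_P^{\nu}$. To check that $[y]$ satisfies~\eqref{homogeneous Gale system}, note that the vector $(\log x^{a_j})_j = A^\top (0,\log x_1,\ldots,\log x_d)^\top$ lies in the row span of $A$, hence is orthogonal to each column of $B$; exponentiating the orthogonality relation attached to the $j$-th column of $B$ yields precisely the $j$-th equation of~\eqref{homogeneous Gale system}.

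Next I would construct the inverse. Given a class $[y] \in \P{\mathcal C}_P^{\nu}$ whose representative $y$ solves~\eqref{homogeneous Gale system}, the vector $w:=(\log\langle P_j,y\rangle)_{j=1,\ldots,n}$ is orthogonal to every column of $B$ by the Gale equations, so $w \in (\ker A)^{\perp} = \operatorname{image}(A^\top)$. Since $A$ has maximal rank $d+1$, there are unique $t_0,t_1,\ldots,t_d \in \R$ with $\log\langle P_j,y\rangle = t_0 + \sum_{i=1}^d t_i a_{j,i}$. Setting $x_i := e^{t_i}$ for $i=1,\ldots,d$ and rescaling $y$ by the positive factor $e^{-t_0}$ (a move that preserves the class $[y]$), one obtains $\langle P_j,y\rangle = x^{a_j}$; the identity $CD=0$ then gives $f_i(x) = \sum_j c_{ij}\langle P_j,y\rangle = \langle \sum_j c_{ij} P_j,\,y\rangle = 0$. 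The scaling ambiguity in $y$ only shifts $t_0$, leaving $x$ invariant, so the map $[y]\mapsto x$ is well defined.

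Checking that the two maps are mutually inverse is then immediate from the construction, yielding the bijection with $\P{\mathcal C}_P^{\nu}$. For the second assertion, I would invoke Remark~\ref{proj}: when $(1,0,\ldots,0) \in \overline{{\mathcal C}}_P$, dehomogenizing $y_0 = 1$ identifies $\P{\mathcal C}_P^{\nu}$ with $\Delta_P$, and this transports the solution set of~\eqref{homogeneous Gale system} in $\P{\mathcal C}_P^{\nu}$ onto its solution set in $\Delta_P$. The only point requiring genuine care (rather than routine linear algebra) is the projective bookkeeping in the inverse construction: since \eqref{homogeneous Gale system} is homogeneous of degree zero, one must verify that the recovered $x$ depends only on $[y]$ and not on the chosen representative, which is exactly what the freedom to absorb $e^{-t_0}$ guarantees.
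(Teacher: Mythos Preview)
Your argument is correct and follows the same overall strategy as the paper: both directions use that $\ker C=\operatorname{im} D$ for the forward map and that $(\operatorname{im} B)^\perp=\operatorname{im} A^\top$ for the inverse. The only difference is presentational: the paper builds the inverse via an explicit monomial map $\varphi(y)=((Dy)^{\alpha_1},\ldots,(Dy)^{\alpha_d})$ after choosing $\alpha_j$ with $e_j=\sum_i\alpha_{ij}a_i$, whereas you linearize with logarithms and read off $x_i=e^{t_i}$ from $w=A^\top t$; your version has the advantage of making the independence of $x$ from the representative of $[y]$ (and hence the well-definedness on $\P\mathcal C_P^\nu$) completely explicit through the absorption of $t_0$.
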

\begin{proof}
If $x \in \R_{>0}^d$ is a solution of the system~\eqref{E:system}, then $(x^{a_1},\ldots,x^{a_{n}})$
 belongs to $\ker(C) \cap \R_{>0}^{n}$. Thus, there exists $y\in \R^{k+1}$ (which is unique since $D$ 
 has maximal rank) such that $x^{a_i}=\langle P_i, y \rangle$ for $i=1,\ldots,n$. Then,  $y \in {\mathcal C}_P^{\nu}$ 
 and $y$ is a solution of the Gale dual system~\eqref{homogeneous Gale system}. If furthermore 
 $(1,0,\ldots,0) \in \overline{{\mathcal C}}_P$, then dividing by $y_0$ if necessary, a solution 
 $y \in {\mathcal C}_P^{\nu}$ of \eqref{homogeneous Gale system} gives a solution of the same 
 system in $\Delta_P$ because it is homogeneous of degree zero. We showed in Remark~\ref{proj} 
 that the previous map is bijective by giving explicitly its inverse map.
 
Now, let $y \in {\mathcal C}_P^{\nu}$ be a solution of \eqref{homogeneous Gale system}.
Let $(e_1,\ldots,e_d)$ be the canonical basis of $\R^d$.
Since $A$ has maximal rank, there exists $\alpha_j=(\alpha_{1j},\ldots,\alpha_{nj}) \in\R^{n}$, for $j=1,\ldots,d$, 
such that $e_j=\sum_{i=1}^{n} \alpha_{ij}a_i$. To any column vector $z \in \R^{k+1}$, we associate the 
vector $D \cdot z$ with coordinates $\langle P_i ,z \rangle$, $i=1,\ldots,n$. Consider now the map 
\begin{equation*}
\begin{array}{rl}\varphi\colon\R^{k+1}&\to \R^{d}\\ z &\mapsto
\left((D \cdot z)^{\alpha_1},\ldots, (D \cdot z)^{\alpha_d}\right), \end{array}
\end{equation*}%
where $(D \cdot z)^{\alpha_j}=\prod_{i=1}^{n} \langle P_i,z \rangle^{\alpha_{ij}}$.
Let $x=\varphi(y)$. Then, $ x^{a_i}=\langle P_i,y \rangle$ for $i=1,\ldots,n$, which gives 
$(x^{a_1},\ldots,x^{a_{n}})\in \ker(C)$. Moreover, since $y \in {\mathcal C}_P^{\nu}$, 
we have that $x \in \R_{>0}^d$, and then $x$ is a positive solution of system~\eqref{E:system}.
\end{proof}

\begin{remark}
Theorem 2.2 in \cite{BS07} is a particular case of Theorem \ref{Galesystems} taking a Gale dual matrix 
$D$ with the identity matrix $I_{k+1}$ as a submatrix (in which case the condition that 
$(1,0,\ldots,0) \in \overline{{\mathcal C}}_P$ is trivially satisfied).
\end{remark}

\section{Existence of positive solutions via Gale duality and degree theory}\label{sec:main}

In this section, we present Theorem~$\ref{basicmainresult}$, which gives conditions on the Gale dual matrices $B$ and $D$ that 
guarantee the existence of at least one positive solution of the system~\eqref{E:system}. As we mentioned in the Introduction, 
our results are based on degree theory. We will consider coefficient matrices $C$ of maximal rank $d$ such that  the necessary
condition~\eqref{eq:nonempty} is satisfied.  As before, we will fix a matrix 
$D=(d_{ij})\in\R^{n \times (k+1)}$ which is Gale dual to $C$ and 
denote by $P_1, \dots, P_{n} \in \R^{k+1}$ the row vectors of $D$, which are all nonzero.
We will moreover assume that
the polytope $\Delta_P$ in ~\eqref{eq:deltaP}  is nonempty and bounded.
Our main result is Theorem~\ref{basicmainresult}.  We also point out how the statement
simplifies in case $C$ is uniform, that is, when all maximal minors of $C$ are nonzero and we give
examples that show how this result can be applied.

Given an open set $U\subset \R^k$, a function $h\in \mathcal{C}^0(\overline{U},\R^k)$ and 
$y\in \R^k\setminus h({\partial} U)$, the symbol $\degree(h,U,y)$ denotes the Brouwer degree (which belongs to $\Z$) of $h$ with
respect to $(U, y)$. A main result in degree theory is that if $\degree(h,U,y)\neq 0$, then there 
exists at least one $x\in U$ such that $y = h(x)$. For background and the main properties about 
Brouwer degree, we refer to Section 2 in the Supplementary Information of~\cite{CFMW} and Section 14.2 in \cite{teschl}.

We present the version of the Brouwer's theorem that we will use. This version is a particular case 
of Theorem $2$ in the Supplementary Information of~\cite{CFMW} (here we take $W$ empty), 
and also appears in the proof of Lemma 2 of \cite{monotone}. 
Recall that a vector $v\in \R^k$ \emph{points inwards} $U\subset \R^k$ at a boundary point $x \in \partial U$,
 if for small  $\varepsilon >0$ it holds that
$x + \varepsilon v \in \overline{U}$.

 \begin{thm}[\cite{CFMW,monotone}] \label{th:degree}
Let $h: \R^k\to \R^k$ be a $\mathcal{C}^1$-function. Let $U$ be an open, nonempty, bounded and convex subset 
of $\R^k$ such that
\begin{enumerate}
\item[i)] $h(x)\neq 0$ for any $x\in \partial U$.
\item[ii)] for every  $x\in \partial U$, the vector $h(x)$ points inwards $U$ at $x$.
\end{enumerate}
Then, \[\degree(h,U,0)=(-1)^k.\]
In particular, there exists a point $x$ in $U$ such that $h(x)=0$. Moreover, assuming the zeros are nondegenerate, if there exists 
a zero $x^*\in U$ where the Jacobian at $x^*$ has the same sign as $(-1)^{k+1}$, then there are at least three zeros and always an odd number.
\end{thm}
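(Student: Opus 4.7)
My plan is to prove the degree equality by a homotopy reduction to an affine map whose degree can be computed directly. Since $U$ is open and nonempty, I fix a point $p \in U$ and introduce the straight-line homotopy
\[ H(x,t) = (1-t)\,h(x) + t\,(p - x), \qquad t \in [0,1]. \]
At $t = 1$ the map is the affine isomorphism $x \mapsto p-x$, whose unique zero is $p \in U$ with Jacobian $-\Id$, so $\degree(H(\cdot,1),U,0) = \sign\det(-\Id) = (-1)^k$. By homotopy invariance of the Brouwer degree, it then suffices to verify that $H(x,t) \neq 0$ on $\partial U \times [0,1]$; this would give $\degree(h,U,0) = \degree(H(\cdot,1),U,0) = (-1)^k$ and, since the degree is nonzero, a zero of $h$ in $U$.

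The verification of non-vanishing on $\partial U \times [0,1]$ is the heart of the argument, and is where I will use both hypotheses (i) and (ii) together with convexity. The cases $t=0$ and $t=1$ are immediate from (i) and from the fact that $U$ being open implies $p \notin \partial U$. For $t\in (0,1)$, if $H(x,t) = 0$ with $x\in \partial U$ then $h(x) = \tfrac{t}{1-t}(x-p)$ is a positive scalar multiple of $x-p$. I then invoke a supporting hyperplane of $U$ at $x$: since $U$ is open and convex with $x\in\partial U$, there exists a unit outward normal $\nu$ such that $\langle z - x, \nu\rangle \leq 0$ for every $z\in\overline{U}$. Applying this to $z = x + \varepsilon h(x)$ for small $\varepsilon > 0$, which lies in $\overline{U}$ by (ii), yields $\langle h(x), \nu\rangle \leq 0$, while $p\in U$ gives the strict inequality $\langle p - x,\nu\rangle < 0$, equivalently $\langle x - p, \nu\rangle > 0$. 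Combined with $h(x) = \tfrac{t}{1-t}(x-p)$ this is a contradiction.

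For the final assertion on nondegenerate zeros, I would use the standard formula
\[ \degree(h,U,0) \;=\; \sum_{x^* \in U,\, h(x^*)=0} \sign\det Dh(x^*). \]
Writing $N_+$ for the number of zeros with Jacobian sign $(-1)^k$ and $N_-$ for those with sign $(-1)^{k+1}$, the identity $\degree(h,U,0) = (-1)^k$ becomes $N_+ - N_- = 1$. Hence if at least one $x^*$ has Jacobian sign $(-1)^{k+1}$, then $N_- \geq 1$, so $N_+ \geq 2$, and the total count of zeros $N_+ + N_- = 1 + 2N_-$ is odd and at least $3$.

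The step I expect to require the most care is the passage from the pointwise condition ``$h(x)$ points inwards at $x$,'' stated via $x + \varepsilon h(x) \in \overline{U}$, to the half-space inequality $\langle h(x), \nu\rangle \leq 0$ against the outward normal. This equivalence hinges on the fact that a closed convex set with nonempty interior is the intersection of its supporting half-spaces, so that inward motion from a boundary point is detected by every such normal. Once this is established, the rest is a textbook homotopy-plus-parity argument in degree theory.
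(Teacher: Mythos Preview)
Your argument is correct. The paper itself does not prove this theorem: it is stated with attribution to \cite{CFMW} and \cite{monotone} and used as a black box, so there is no in-paper proof to compare against. What you have written is a clean, self-contained proof via the standard homotopy to the affine map $x\mapsto p-x$, and all the steps check out. In particular, your use of a supporting hyperplane at $x\in\partial U$ to convert the ``points inwards'' condition $x+\varepsilon h(x)\in\overline U$ into the inequality $\langle h(x),\nu\rangle\le 0$, combined with the strict inequality $\langle x-p,\nu\rangle>0$ coming from $p\in U$ being interior, correctly rules out zeros of $H(\cdot,t)$ on $\partial U$ for $t\in(0,1)$. The parity count at the end is also correct: writing $\degree(h,U,0)=(-1)^k(N_+-N_-)$ with $N_\pm$ counting nondegenerate zeros of Jacobian sign $(-1)^k$ and $(-1)^{k+1}$ respectively, one gets $N_+-N_-=1$, hence $N_-\ge 1$ forces $N_++N_-=1+2N_-\ge 3$ and odd.
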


Define the sign of any real number $r$ by
$\sign(r)=+1,-1, 0$ according as $r>0$, $r<0$ or $r=0$ respectively.
The sign of any vector $r=(r_1,\ldots,r_k) \in \R^k$ is then defined by $\sign(r)=(\sign(r_1),\ldots,\sign(r_k))$.

In view of Theorem~\ref{Galesystems}, we look for the solutions of \eqref{homogeneous Gale system} 
in $\Delta_P$. Plugging $y_0=1$ in \eqref{homogeneous Gale system} and clearing the denominators, 
we get a  generalized polynomial system in $\Delta_P$ on variables $y=(y_1,\ldots,y_k)$:
\begin{equation}\label{nothomogeneous Gale system}
g_j(y)=0 , \quad  j=1,\ldots,k,
\quad g_j(y)=\prod_{b_{ij}>0} p_i(y) ^{b_{ij}}-\prod_{b_{ij}<0} p_i(y) ^{-b_{ij}},\end{equation}
where 
 \begin{equation}\label{eq:pi}
 p_i(y)=\langle P_i, (1,y) \rangle.
 \end{equation}
 We denote by $g$ the Gale map:
\begin{equation}\label{eq:G}
g=(g_1,\dots,g_k)\colon \R^k\to\R^k.
\end{equation}

Recall that Condition~\eqref{eq:nonempty} implies that all $P_i$ lie 
in an open halfspace through the origin.
\begin{definition}\label{def:ICbis} Given $C\in\R^{d\times n}$ of maximal rank $d$ satisfying condition~\eqref{eq:nonempty}, 
we denote by $\bar{I}_C \subset\{1,\dots,n\}$ the set of indices
of the vectors $P_{i}$ which belong to the boundary of the cone $\mathcal{C}_P$ and by $I_C \subset \bar{I}_C$ the set of indices
of the vectors $P_i$ which lie on the rays of the cone (that is, on its one-dimensional faces).
\end{definition}

As the facets of ${\mathcal C}_{P}^{\nu}$ are supported on the orthogonal
hyperplanes $P_i^{\bot}$ for  $i \in I_C$, 
 for any $i \in I_C$ the vector $P_i$ is an inward normal vector of 
${\mathcal C}_{P}^{\nu}$ at any point in the relative interior
of the facet supported on $P_i^{\bot}$. It follows that the facets of the polytope $\Delta_P$ are supported on the hyperplanes 
$p_i(y)=0$ for $i \in I_C$, and that $(d_{i1},\ldots, d_{ik})$ is an inward normal vector 
of $\Delta_P$ at any point in the relative interior of the 
facet $F_i$. Also,  we denote by $F_i$ the face of $\Delta_P$ supported on $p_i(y)=0$ for any $i\in \bar{I}_C$,
and for any $L \subset \bar{I}_C$ we set $F_L=\cap_{i \in L} F_i$. Here, by a face of $\Delta_P$ 
we mean a face of the closure of $\Delta_P$. We denote by $F_L^{\circ}$
the relative interior of $F_L$. 

\begin{definition}\label{Face maximal}
Let $C\in\R^{d\times n}$ of maximal rank $d$ satisfying condition~\eqref{eq:nonempty}, with Gale dual $D$.
For a given proper  face $F$ of $\Delta_{P}$, we denote by $L(F) = \{ i \in \bar{I}_C \, : \, F \subset F_i \}$
  the maximal set of indices $L \subset \bar{I}_{C}$ such that $F=F_{L}$ and set
\[\mathcal F(\Delta_P)=\{L(F)\, : \, F\, \mbox{is a proper face of} \ \Delta_P\}\]
\end{definition}
Note  that $\bar{I}_C$ and $I_C$ depend on $C$ and are independent of the choice of  Gale dual matrix $D$. 
Moreover, in case $C$ is uniform $D$ is also uniform, 
$\bar{I}_C= I_C$ and the cardinality of $L(F)$ equals the codimension of the face, which means that the
polytope $\Delta_P$ is simple.

The sign of $g$ along the boundary of $\Delta_P$
can sometimes be determined as follows.

\begin{lemma}\label{sign of G} 
Let $A\in\R^{(d+1)\times n}$ as in \eqref{eq:overlineA} of maximal rank $d+1$, $C\in \R^{d\times n}$ 
a uniform matrix satisfying the necessary condition~\eqref{eq:nonempty}, and let  $B\in\R^{n\times k}$ and 
$D\in\R^{n\times (k+1)}$ be  Gale dual matrices of $A$ and $C$ 
respectively. Let $g=(g_1,\dots,g_k)$ the Gale map as in~\eqref{eq:G}. Let $j \in \{1,\ldots, k\}$.
\begin{enumerate}
\item Let $F_i$ be any facet of $\Delta_P$ and let $x \in F_i^{\circ}$. If $b_{ij} \neq 0$, then $\sign(g_j(x))=-\sign(b_{ij})$.
\item
Let $L\in \mathcal F(\Delta_P)$ and $x \in F_L^{\circ}$.
Assume that 
$\{b_{\ell j} \, : \,  \ell \in L\} \neq \{0\}$.
\begin{enumerate}[(i)]
\item If there exists $\ell_0,\ell_1 \in L$ such that $b_{\ell_0 j} \cdot b_{\ell_1 j}<0$, then  $g_j(x)=0$.
\item If $b_{\ell j} \geq 0$ for all $\ell \in L$ then $\sign(g_j(x))=-1$, and if $b_{\ell j} \leq 0$ for all $\ell \in L$ then 
$\sign(g_j(x))=+1$.
\end{enumerate} 
\end{enumerate}
\end{lemma}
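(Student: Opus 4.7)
The plan is to reduce everything to a direct sign analysis of the two factored products making up $g_j$. Write $g_j = G_j^+ - G_j^-$, where
\[ G_j^+(y)=\prod_{b_{ij}>0} p_i(y)^{b_{ij}}\quad\text{and}\quad G_j^-(y)=\prod_{b_{ij}<0} p_i(y)^{-b_{ij}}. \]
The fundamental observation is that by definition $\Delta_P=\mathcal{C}_P^\nu\cap\{y_0=1\}$, so on $\Delta_P$ every $p_i(y)$ is strictly positive, while on its closure $p_i$ is nonnegative and vanishes, for $i\in I_C$, exactly along the facet $F_i$. Since each $p_i$ is raised to a strictly positive real exponent in both $G_j^+$ and $G_j^-$, each of these two products is strictly positive at a given point $x$ of the boundary unless one of its base factors is zero there, in which case the whole product equals $0$. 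Thus the whole lemma reduces to bookkeeping about which $p_i$'s vanish at $x$ and in which of the two products they appear.

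For part (1), take $x\in F_i^\circ$, so $p_i(x)=0$ and $p_\ell(x)>0$ for every $\ell\neq i$. If $b_{ij}>0$, then $p_i^{b_{ij}}$ is one of the factors of $G_j^+$, whence $G_j^+(x)=0$, while $G_j^-(x)$ is a product of positive quantities (or the empty product $1$); hence $g_j(x)=-G_j^-(x)<0$, matching $-\sign(b_{ij})$. The case $b_{ij}<0$ is symmetric, with the roles of $G_j^+$ and $G_j^-$ exchanged.

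For part (2), take $x$ in the relative interior of $F_L$, so that $p_\ell(x)=0$ precisely for $\ell\in L$. Assertion (i): if there are $\ell_0,\ell_1\in L$ with $b_{\ell_0 j}>0>b_{\ell_1 j}$, then $p_{\ell_0}$ appears in $G_j^+$ and $p_{\ell_1}$ in $G_j^-$, so $G_j^+(x)=G_j^-(x)=0$ and $g_j(x)=0$. Assertion (ii): if $b_{\ell j}\geq 0$ for every $\ell\in L$ and not all of these are zero, choose $\ell^*\in L$ with $b_{\ell^* j}>0$; then $p_{\ell^*}^{b_{\ell^* j}}$ is a factor of $G_j^+$ and $G_j^+(x)=0$, while every index $\ell$ with $b_{\ell j}<0$ lies outside $L$ (by the sign hypothesis on $L$) and so $p_\ell(x)>0$, making $G_j^-(x)>0$; hence $g_j(x)=-G_j^-(x)<0$. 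The mirror case with all $b_{\ell j}\leq 0$ and some strictly negative gives $g_j(x)>0$ by exchanging the roles of $G_j^+$ and $G_j^-$.

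I do not expect a genuine obstacle in this argument; the only subtlety worth flagging is that in case (ii) the conclusion is written for $x\in F_L$, whereas the positivity of $G_j^-(x)$ (respectively $G_j^+(x)$ in the mirror case) really uses that $p_\ell(x)>0$ for $\ell\notin L$. This is automatic when $x\in F_L^\circ$, which is the natural context; on lower-dimensional sub-faces of $F_L$ one would simply reapply the lemma to the larger index set $L'\supsetneq L$ collecting all indices where $p_\ell$ vanishes at $x$.
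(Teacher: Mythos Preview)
The paper states this lemma without proof, so there is no argument to compare against; your direct sign analysis of the two products $G_j^+$ and $G_j^-$ is exactly the intended (and essentially the only) route, and it is correct.

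Your closing remark is well taken: part~(2)(ii) as literally stated for $x\in F_L$ (rather than $x\in F_L^\circ$) is slightly too strong, since at a boundary point of $F_L$ an additional $p_{\ell'}$ with $\ell'\notin L$ and $b_{\ell' j}<0$ could also vanish, killing $G_j^-$. Your proposed fix---pass to the maximal $L'\supseteq L$ with $x\in F_{L'}^\circ$---is precisely what is needed, and it is also how the lemma is actually used downstream (Corollary~\ref{coro:Gvanish} and the proof of Theorem~\ref{basicmainresult} only require the conclusion on each open face). So the argument is complete once one reads $x\in F_L$ as $x\in F_L^\circ$, or equivalently takes $L$ to be the full set $\{\ell:p_\ell(x)=0\}$.
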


\begin{proof}  Since $B$ is a Gale dual matrix of $A$, then each column of $B$ contains at least 
a positive and a negative entry, that is, the sets $\{b_{\ell j}>0 \, : \ell=1,\dots,n \}$ 
and $\{b_{\ell j}<0 \, : \ell=1,\dots,n \}$ are nonempty for each $j=1,\dots,k$.

For the first item, note that the facet $F_i$ is supported on $p_i(y)=0$. Suppose $x \in F_i^{\circ}$ 
and $b_{ij} \neq 0$. We compute $g_j$ as in~\eqref{nothomogeneous Gale system}. If $b_{ij}>0$, then 
\[ g_j(x)=-\prod_{b_{\ell j}<0} p_\ell (x) ^{-b_{\ell j}},\]
because $p_i(x)=0$. In this case, $g_j(x)<0$ because as $C$ is uniform, $D$ is also uniform and so no other $p_\ell$
can be a nonzero multiple of $p_i$.  Then, $\sign(g_j(x))=-\sign(b_{ij})$. The case $b_{ij}<0$ is analogous. 

For the second part, assume that $L\in \mathcal F(\Delta_P)$ and $x \in F_L^{\circ}$, 
with $\{b_{\ell j} \, : \,  \ell \in L\} \neq \{0\}$. If there exists $\ell_0,\ell_1 \in L$ such 
that $b_{\ell_0 j} \cdot b_{\ell_1 j}<0$, we have that $p_{\ell_1}(x)=p_{\ell_2}(x)=0$, and then both terms
$\prod_{b_{\ell j}>0} p_\ell (x) ^{b_{\ell j}}$ and  $\prod_{b_{\ell j}<0} p_\ell (x) ^{-b_{\ell j}}$ are zero. Then $g_j(x)=0$. 
In the other case, if $b_{\ell j} \geq 0$ for all $\ell \in L$, then $b_{\ell j} > 0$ for some $\ell \in L$ because we assume
that not all vanish, and thus
\[ g_j(x)=-\prod_{b_{\ell j}<0} p_\ell (x) ^{-b_{\ell j}}.\]
Note that if $x \in F_L^{\circ}$, $p_\ell(x)=0$ if and only if $\ell\in L$. Then, since $b_{\ell j} \geq 0$
 for all $\ell \in L$, we have that $p_{\ell}(x) \neq 0$ for all $\ell$ such that $b_{\ell j} <0$, 
 and so the product $-\prod_{b_{\ell j}<0} p_\ell (x) ^{-b_{\ell j}}$ is negative. 
 The case where $b_{\ell j} \leq 0$ for all $\ell \in L$ is analogous. 
\end{proof}

\begin{corollary}\label{coro:Gvanish}
Let $A\in\R^{(d+1)\times n}$ as in \eqref{eq:overlineA} of maximal rank $d+1$, $C\in \R^{d\times n}$ a uniform matrix of rank $d$
satisfying condition~\eqref{eq:nonempty},  and $B\in\R^{n\times k}$ 
and $D\in\R^{n\times (k+1)}$ Gale dual matrices of $A$ and $C$ respectively. Let $g$ be the Gale map~\eqref{eq:G} 
associated to $B$ and $D$. If $g(x)=0$ and $x \in F_L^{\circ}$ (so $L \in  \mathcal 
F(\Delta_P)$),
then for $j=1,\ldots,k$, either $\{b_{\ell j} \, :  \, \ell \in L\}=\{0\}$, or $\{b_{\ell j} \, : \, \ell \in L\}$
contains a (strictly) positive and a (strictly) negative element.
In particular, if $g$ vanishes in the relative interior of a facet $F_{\ell}$ then the $\ell$-th row of $B$ contains only zero entries.
\end{corollary}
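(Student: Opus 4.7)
The corollary is essentially the contrapositive of part (2) of Lemma~\ref{sign of G}, so my plan is to argue coordinate by coordinate and rule out every case except the two listed in the conclusion.

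Fix $x \in F_L$ with $g(x)=0$, and fix $j \in \{1,\ldots,k\}$. Since $g(x)=0$ we have $g_j(x)=0$. I wish to show that either $\{b_{\ell j}:\ell\in L\}=\{0\}$, or this set contains both a strictly positive and a strictly negative element. Suppose, aiming at a contradiction, that neither alternative holds. Then the set $\{b_{\ell j}:\ell\in L\}$ is not identically zero, yet all of its nonzero elements share a common sign; equivalently, either $b_{\ell j}\ge 0$ for every $\ell\in L$ with at least one $\ell_0\in L$ satisfying $b_{\ell_0 j}>0$, or $b_{\ell j}\le 0$ for every $\ell\in L$ with at least one $\ell_0\in L$ satisfying $b_{\ell_0 j}<0$.

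In both cases the hypotheses of Lemma~\ref{sign of G}(2)(ii) are satisfied for our fixed $j$, $L$ and $x\in F_L$. The lemma then gives $\sign(g_j(x))=-1$ in the first case and $\sign(g_j(x))=+1$ in the second case. In particular $g_j(x)\neq 0$, contradicting $g(x)=0$. This completes the argument for each coordinate $j$, and hence for all of them.

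The step that needs slight care is making sure I may actually invoke Lemma~\ref{sign of G}(2)(ii): I only need the set $\{b_{\ell j}:\ell\in L\}$ to be nonzero and to have all entries of one sign (allowing zeros), which is exactly the negation of the two alternatives listed in the corollary. Since the rest is a clean bookkeeping of signs, I do not expect any genuine obstacle; the corollary is really just a repackaging of the lemma in the form most useful for the later analysis of the zero set of the Gale map on $\Delta_P$.
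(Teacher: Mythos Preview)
Your proof is correct and follows exactly the route the paper intends: the corollary is stated without proof in the paper, as it is the immediate contrapositive of Lemma~\ref{sign of G}(2)(ii), and you have spelled out precisely that deduction.
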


In order to state Theorem~\ref{basicmainresult} we need to introduce  the following definitions.
\begin{definition}\label{def:facesbis}
Let $C\in\R^{d\times n}$ of maximal rank $d$. 
We define an equivalence relation on $\{1, \dots, n\}$ where two indices $i_1,i_2$ are
equivalent whenever  $P_{i_1}$ and $P_{i_2}$ are proportional. We can then partition
$\bar{I}_{C }$ into equivalence classes $K_1, \dots, K_s$.

Given $B\in \R^{n \times k}$ we define an associated matrix $\bar{B} \in \R^{s \times k}$
with the following coefficients. For any $r =1, \dots, s$ and any $j=1,\ldots,k$ set
\begin{equation}\label{eq:sum}
\bar{b}_{rj}:=\sum_{\ell \in K_{r}} b_{\ell j}.
\end{equation}
 For any $r \in \{1, \dots, s\}$ choose an index $i(r)$ such that $i(r) \in K_r$. Given any $L\subset \{1,\dots,n\}$  we set  
 $$\bar{L} \, =\, \{r \in \{1,\dots,s\} \; : \; L \cap K_r \neq \emptyset \}$$
 and denote by $\bar{B}_{\bar{L}}$ the submatrix of $\bar{B}$ given by the rows with indices in $\bar{L}$.
\end{definition}

Note that when $C$ satisfies~\eqref{eq:nonempty},
if $P_{\ell}$ and $P_{i}$ are equivalent,  there exists a positive constant $c$ such that
$P_i = c P_\ell$. If moreover $C$ is uniform, each equivalence class $K_i$ consists of a single index (that we can assume to be $i$)
and  $\bar{b}_{ij} = b_{ij}$ for any $i$ in $\bar{I}_C =I_C$ and any $j\in \{0, \dots, k\}$.

\begin{definition}\label{def:weaklymixed} We say that a matrix $M$ is \emph{weakly mixed} if any column of $M$ either has only 
zero entries, or contains a positive and a negative element. 
\end{definition}

 We now present the main result of this section.

\begin{thm}\label{basicmainresult}
Let $\mathcal{A}=\{a_1,\dots,a_{n}\}\subset \R^d$  and $C\in\R^{d\times n}$ of maximal rank $d$.
Let $A\in\R^{(d+1)\times n}$ as in  \eqref{eq:overlineA} of maximal rank $d+1$, and $B\in\R^{n\times k}$ and 
$D\in\R^{n\times (k+1)}$ Gale dual matrices of $A$ and $C$ respectively.  Assume 
that ${\bf 0}\in\mathcal C^\circ$ and that  $\Delta_P$ is a full dimensional bounded polytope.

Assume furthermore that the following conditions hold:
\begin{enumerate}
\item For any $L \in \mathcal F(\Delta_P)$ the submatrix $\bar{B}_{\bar{L}}$ is not weakly mixed, that is, 
$\bar{B}_{\bar L}$ has a nonzero column whose entries are all either nonpositive, or nonnegative.
%
\item For any $i\in \bar{I}_C$ as in Definition~\ref{def:ICbis}, and $r$ such that $i\in K_r$, the following holds: 
\begin{itemize}
\item $\bar{b}_{rj} \cdot d_{ij} \geq 0$ for any $j=1,\ldots,k$,
\item there exists $j\in \{1,\ldots,k\}$ such that  $\bar{b}_{rj} \cdot d_{ij} > 0$,
\item for all $j \in \{1,\ldots,k\}$, if  $\bar{b}_{rj}=0$ then $d_{ij}=0$.
\end{itemize}
\end{enumerate}
Then $n_{\mathcal A}(C) >0$.
\end{thm}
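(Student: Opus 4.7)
The plan is to invoke Theorem~\ref{Galesystems} to translate the problem into the existence of a zero of the Gale map $g=(g_1,\dots,g_k)$ of~\eqref{eq:G} in the polytope $\Delta_P$, and then to apply Theorem~\ref{th:degree} to $h=-g$ on the open set $U=\Delta_P$. The set $\Delta_P$ is convex (polyhedral), open (as the intersection of the open cone $\mathcal{C}_P^{\nu}$ with the hyperplane $y_0=1$), bounded and $k$-dimensional by the standing assumptions, so the topological hypotheses on $U$ are satisfied; Corollary~\ref{bounded} guarantees that the condition $(1,0,\dots,0)\in\overline{\mathcal{C}}_P$ of Theorem~\ref{Galesystems} also holds, so a zero of $g$ in $\Delta_P$ will indeed lift to a positive solution of~\eqref{E:system}. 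It remains to verify conditions (i) and (ii) of Theorem~\ref{th:degree}.

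Condition~(i), that $g$ does not vanish on $\partial\Delta_P$, is immediate from hypothesis~(1) and Corollary~\ref{coro:Gvanish}. Indeed, a boundary point $x$ lies in the relative interior of a unique face $F_L$ with $L=\{i\in I_C:p_i(x)=0\}\in\mathcal F(\Delta_P)$, and a vanishing $g(x)=0$ would force every column of $B_L$ to be either identically zero or to contain entries of both strict signs; that is, $B_L$ would be weakly mixed, contradicting~(1).

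For condition~(ii), I must show that $-g(x)$ points inwards $\Delta_P$ at every $x\in F_L^\circ$. Since the inward normal to $\Delta_P$ at the facet $F_i$ is $(d_{i1},\dots,d_{ik})$, this reduces to
\[
\sum_{j=1}^{k} d_{ij}\,g_j(x)\;\le\;0 \qquad \text{for every }i\in L,
\]
which I would verify term by term. If $d_{ij}=0$, the $j$-th term vanishes. If $d_{ij}\neq 0$, the third bullet of~(2) gives $b_{ij}\neq 0$, and the first bullet upgrades $b_{ij}d_{ij}\ge 0$ to $b_{ij}d_{ij}>0$, so $b_{ij}$ and $d_{ij}$ share the same sign. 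Applying Lemma~\ref{sign of G}(2) to the $j$-th column of $B_L$, which is nonzero because $b_{ij}\neq 0$, either this column has entries of both signs and $g_j(x)=0$, or it is one-signed in the sign of $b_{ij}$ and Lemma~\ref{sign of G}(2) yields $\sign(g_j(x))=-\sign(b_{ij})=-\sign(d_{ij})$. In either case $d_{ij}g_j(x)\le 0$, proving the required inequality; the second bullet of~(2) additionally makes the inequality strict at facet interiors, so $-g$ is even strictly transverse there.

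Theorem~\ref{th:degree} then gives $\degree(-g,\Delta_P,0)=(-1)^k\neq 0$, producing a zero of $g$ in $\Delta_P$ and hence, via Theorem~\ref{Galesystems}, a positive solution of~\eqref{E:system}, so $n_\mathcal{A}(C)>0$. The main obstacle is the sign bookkeeping in~(ii): the three bullets of hypothesis~(2) must be combined with the case analysis of Lemma~\ref{sign of G}(2) uniformly across all $L\in\mathcal F(\Delta_P)$, and the third bullet is crucial because without it one could have $b_{ij}=0$ and $d_{ij}\neq 0$, producing a contribution to $d_{ij}g_j(x)$ whose sign the hypotheses could not control. The choice to apply Theorem~\ref{th:degree} to $-g$ rather than $g$ is forced by the sign convention of Lemma~\ref{sign of G}(1), which orients the Gale map outwards from each facet.
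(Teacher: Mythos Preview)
Your proof is correct and follows the same strategy as the paper: reduce via Theorem~\ref{Galesystems} to finding a zero of $g$ in $\Delta_P$, then apply Theorem~\ref{th:degree} to $h=-g$ on $U=\Delta_P$, using hypothesis~(1) together with Corollary~\ref{coro:Gvanish} for nonvanishing on the boundary, and hypothesis~(2) together with Lemma~\ref{sign of G} for the inward-pointing condition. The only cosmetic difference is that the paper checks the inward condition first at facet interiors and then passes to arbitrary faces by continuity (since $F_L\subset\overline{F_i^\circ}$ for each $i\in L$), whereas you compute $\sum_j d_{ij}g_j(x)\le 0$ directly at every face using Lemma~\ref{sign of G}(2); both arguments are equally valid.
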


\begin{proof}
In view of Theorem~\ref{Galesystems}, we look for the solutions of \eqref{homogeneous Gale system} 
in $\Delta_P$, given by $G_{j}(y)=1$ for $j=1,\ldots,k$.
For each  $r \in \{1,\ldots,s\}$, choose one representative $i(r)$ in the equivalence class $K_r$.
Under the assumption ${\bf 0}\in\mathcal C^\circ$, if $P_{i}$ and $P_{j}$ are proportional
 then $P_{j}=c \cdot P_{i}$ with $c >0$, as noticed before. Therefore,  there exist constants $c_{j}>0$ for $j=1,\ldots,k$ 
 such that \eqref{homogeneous Gale system} is equivalent to
\begin{equation}\label{homogeneous Gale systembis}
\prod_{r \in \{1,\ldots,s\}} \langle P_{i(r)}, y \rangle^{\bar{b}_{r,j}}=c_{j},\; j=1,\ldots,k.
\end{equation}
Plugging $y_0=1$ and clearing the denominators,  system~\eqref{homogeneous Gale system}  has
the same solutions as the following  generalized polynomial system in $\Delta_P$ on variables $y=(y_1,\ldots,y_k)$:
\begin{equation}\label{nothomogeneous Gale systembis}
\bar{g}_{j}(y)=0 , \quad  j=1,\ldots,k,
\end{equation}
where
\begin{equation}\label{E:general}
\bar{g}_{j}(y)=\prod_{\bar{b}_{r,j}>0} p_{i(r)}(y)^{\bar{b}_{r,j}}- c_{j }\prod_{\bar{b}_{r,j}<0} p_{i(r)}(y) ^{-\bar{b}_{r,j}}.
\end{equation}

Note that by construction, if $r,r' \in  \{1,\ldots, s\}$ are distinct, then $P_{i(r)}$ and $P_{i(r')}$ are not proportional.
Thus system \eqref{nothomogeneous Gale systembis} looks like a system \eqref{nothomogeneous Gale system} with the
exception given by the positive constants $c_{j}$. It is then not difficult to see that,  the conclusions of 
Lemma~\ref{sign of G} and Corollary \ref{coro:Gvanish} hold for the map $\bar{g}=(\bar{g}_1,\ldots,\bar{g}_{k})$ 
corresponding to the system \eqref{nothomogeneous Gale systembis}. 
To simplify the notation, we assume from now on that $C$ is uniform, and so, as we remarked before, $I_C=\bar{I}_C$ and  $\bar{b}_{ij}=b_{ij}$.

Since $\Delta_P$ is full dimensional and bounded, $(1,0,\dots,0)\in\mathcal{C}_P$. 
To prove that $n_{\mathcal A}(C) >0$, by Theorem~\ref{Galesystems} it is sufficient to show that the Gale system
 \eqref{nothomogeneous Gale systembis} has at least one 
solution in $\Delta_P$.
First note that a vector $v \in \R^k$ points inwards $\Delta_P$ at a point $y$ contained in 
the relative interior of the facet $F_i$  supported on $P_i^{\bot}$
with $i \in I_C$
if and only if $\langle (d_{i1},\ldots,d_{ik}), v \rangle \geq 0$. More generally  
$v \in \R^k$ points inwards $\Delta_P$ at a point $y$ in the 
relative interior of a face
$F_L$  ($L \in \mathcal F(\Delta_P)$) if and only if $\langle (d_{\ell 1},\ldots,d_{\ell k}),v \rangle \geq 0$ for any $\ell \in L$, by a classical 
result of convex geometry.
Assumption (1) ensures that $g$ does not vanish at $\partial \Delta_P$, by Corollary \ref{coro:Gvanish}.
By Lemma \ref{sign of G},  condition (2) ensures that $-g$ points inwards $\Delta_P$ at each point $x$ in the relative 
interior of any facet $F_i$. Then $-g$ also points inwards $\Delta_P$ at any point $x$ in the relative interior of a face $F_L$.
The result follows now from Theorem~\ref{th:degree}, taking $U=\Delta_P$ and $h=-g$.
\end{proof}

\begin{example}\label{ex:circuit}
Consider the codimension one case $k=1$ (which is treated carefully in \cite{BD}). Then $B \in \R^{(d+2) \times 1}$ is a column matrix and
its entries are the coefficients $\lambda_1,\ldots,\lambda_{d+2}$ of a nontrivial affine relation on $\mathcal A$.
Assume that $A$ is uniform (equivalently, assume that $\mathcal A$ is a circuit \footnote{A point configuration 
$\mathcal{A}$ of $d+2$ points is a \emph{circuit} if any subset of $d+1$ points of $\mathcal{A}$ is affinely independent.}). 
Then, $B$ has no zero entry.
Assume moreover that $C$ is uniform and that ${\bf 0}\in\mathcal C^\circ$. Then, there exists a Gale dual matrix $D$ such that
$\Delta_P$ is a bounded interval of $\R$. Moreover, there exists a vector $\delta \in \R^2$ 
such that $\langle  P_i, \delta \rangle >0$
for $i=1,\ldots,d+2$, where $P_1,\ldots,P_{d+2} \in \R^{2}$ are the row vectors of $D$. 
Let $\alpha: \{1,\ldots,d+2\} \rightarrow \{1,\ldots,d+2\}$ be the bijection
such that all determinants $\mbox{det} \, (P_{\alpha_{i}}, P_{\alpha_{i+1}})$ are positive for $i=1,\ldots,d+1$.
Then, by Theorem~2.9 in~\cite{BD}, we have $n_{\mathcal A}(C) \leq \signvar(\lambda_{\alpha_{1}},
\lambda_{\alpha_{2}},\ldots, \lambda_{\alpha_{d+2}})$  and moreover the difference is an even 
integer number (see Proposition~2.12 in~\cite{BD}).
The endpoints of the interval $\Delta_P$ are the roots of the two extremal polynomials $p_{\alpha_{1}}$ 
and $p_{\alpha_{d+2}}$, equivalently, $I_C=\{\alpha_{1}, \alpha_{d+2}\}$.
Now the Gale polynomial $g=g_{1}:\R \rightarrow \R$ (or its opposite) points inwards $\Delta_P$
at its vertices if and only if $\lambda_{\alpha_1} \cdot \lambda_{\alpha_{d+2}}<0$, which is equivalent to
$\signvar(\lambda_{\alpha_1}, \lambda_{\alpha_{d+2}})=1$. But,  $\signvar(\lambda_{\alpha_1},
\lambda_{\alpha_2},\ldots, \lambda_{\alpha_{d+2}})$ and $\signvar(\lambda_{\alpha_1}, \lambda_{\alpha_{d+2}})$ have the same parity.
Thus $g:\R \rightarrow \R$ (or its opposite) points inwards $\Delta_P$ at its vertices 
if and only if $n_{\mathcal A}(C)$ is odd by Proposition~2.12 in~\cite{BD}.
Therefore, in the circuit case the sufficient condition to have $n_{\mathcal A}(C)>0$ which is given by Theorem~\ref{basicmainresult}
is equivalent to $n_{\mathcal A}(C)$ being odd. Now, for any integer $d \geq 2$, it is not difficult
 to get examples of circuits ${\mathcal A} \subset \R^d$ and
matrices $C$ such that $n_{\mathcal A}(C)$ is odd and is different from $1$. This shows that our sufficient condition
does not imply $n_{\mathcal A}(C)=1$ in general, and thus is not equivalent to the condition given 
in \cite{MFRCSD} ensuring that $n_{\mathcal A}(C)=1$.
\end{example}

We now present an example with $k=d=2$ to illustrate Theorem~\ref{basicmainresult}.

\begin{example}\label{example:3positivesolutions}

Let $\mathcal{A} \subset \Z^2$ be the set of points $a_1=(0,4)$, $a_2=(5,4)$, $a_3= (2,8)$, $a_4=(3,0)$
and $a_5=(3,5)$. Consider 
the matrix of coefficients
\[C=\left(\begin{array}{ccccc}
-1 & -1 & 1 & 1 & 0\\  
-(3c+8) & -c & 2c+8 & 0 & 2
\end{array} \right), \]
where $c\in\R$ is a parameter.
The polynomial system of two polynomial equations and two variables $x, y$:
\[\begin{array}{rcl}
	- y^4 - x^5 y^4 + x^2 y^8 + x^3 & = & 0,\\
	-(3c+8)y^4 - c x^5 y^4 + (2c + 8) x^2 y^8 + 2 x^3 y^5 & = & 0,
	\end{array}\]
has support $\mathcal{A}$ and coefficient matrix $C$.
Let $A$ as in \eqref{eq:overlineA}.
Choose the following Gale dual matrices of $A$ and $C$ :

\[B=\left(\begin{array}{rr}
 1 & 0 \\
 2 & 1\\
 1 & 2\\
0 & 1\\
-4 & -4
\end{array} \right) \qquad D= \left(\begin{array}{r|rr}
1 &  1 & 0 \\ 
1 & 1 & 2 \\
1 & 2 & 1 \\
1 & 0 & 1 \\  
c & -4 & -4 
\end{array} \right) \]
Then $p_1(y)=1+y_1$, $p_2(y)=1+y_1+2y_2$, $p_3(y)=1+2y_1+y_2$, $p_4(y)=1+y_2$ and $p_5(y)=c-4y_1-4y_2$. 
If $c>0$, the convex polytope $\Delta_P$ is nonempty, bounded and it has five facets supported 
on the lines $p_i=0$ for $i=1,\ldots,5$, see Figure~\ref{fig:deltap3}.
Moreover, if $c>0$, then the assumptions of Theorem~\ref{basicmainresult} are satisfied and thus $n_{\mathcal A}(C)>0$.
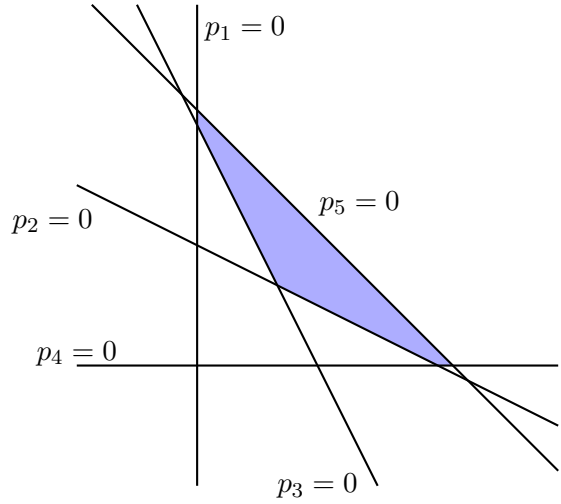
\begin{figure}[h]
	\centering
	\begin{tikzpicture}
	[scale=1.600000,
	back/.style={loosely dotted, thin},
	edge/.style={color=black, thick},
	facet/.style={fill=red!70!white,fill opacity=0.800000},
	vertex/.style={inner sep=1.5pt,circle,draw=black,fill=black,thick,anchor=base
	}]
	%
	%
	\coordinate (0.00000, 0.00000) at (0.00000, 0.00000);
	\coordinate (0.00000, 1.00000) at (0.00000, 1.00000);
	\coordinate (2.00000, 0.00000) at (2.00000, 0.00000);
	\coordinate (2.00000, 1.00000) at (2.00000, 1.00000);
	\coordinate (1.00000, 2.00000) at (1.00000, 2.00000);
	\coordinate (1.00000, 3.00000) at (1.00000, 3.00000);
    \fill[facet, fill=blue!40!white,fill opacity=0.800000] (-1.0000, 1.00000) --  (-1.00000, 1.125000)-- (1.125,-1) 
--(1,-1)--(-0.33333,-0.33333) -- cycle {};
%
	\draw[edge] (-2.00000, 0.50000) -- (2.00000, -1.50000);
	\draw[edge] (0.50000, -2.00000) -- (-1.50000, 2.0000);
	\draw[edge] (-2.0000, -1.00000) -- (2.00, -1.00000);
	\draw[edge] (-1.87500, 2.000) -- (2, -1.87500);
		\draw[edge] (-1.00000, -2.0000) -- (-1.00, 2.00000);
	\node[] at (-2.200, 0.2000){$p_2=0$};
	\node[] at (-0.6000, 1.80000){$p_1=0$};
	\node[] at (0.350, 0.350){$p_5=0$};
	\node[] at (-2.000, -0.90000){$p_4=0$};
	\node[] at (0.000,-2.000){$p_3=0$};
	\end{tikzpicture}
	\caption{The polytope $\Delta_P$ of Example~\ref{example:3positivesolutions}, with $c>0$.}\label{fig:deltap3}
\end{figure}

We use Singular~\cite{Singular}, a free software of computer algebra system, to check what happens when we vary the value of 
$c>0$.

\begin{small}
\begin{verbatim}
%LIB "signcond.lib";
ring r=0, (c,x,y,t), dp;
poly f1=-y^4-x^5*y^4+x^2*y^8+x^3;
poly f2=-(3*c+8)*y^4-c*x^5*y^4+(2*c+8)*x^2*y^8+2*x^3*y^5;
ideal i=f1,f2, diff(f1,x)*diff(f2,y)-diff(f1,y)*diff(f2,x),x*y*t-1;
ideal j=std(i);
ideal k =eliminate(j, x*y*t);
k;
k[1]=48c12+1280c11+12288c10+49152c9+65467c8-2560c7-24576c6-
98304c5-131078c4+1280c3+12288c2+49152c+65563
\end{verbatim}
\end{small}%

The roots of this last polynomial in $c$ correspond to systems with a degenerate solution, and we can check that the only 
positive root of $f_1, f_2$ and their jacobian is $1$. We check, again using Singular~\cite{Singular}, with the library ``signcond.lib" (implemented 
by E. Tobis, based on the algorithms described in~\cite{BPR}) that if we take for example $c=\frac{1}{2}$ ($c<1$), the system has 
3 positive solutions, and if we take $c=\frac{8}{7}$ ($c>1$), the system has only $1$ positive solution. 
We use the command {\small {\tt firstoct}}, that computes the number of roots of a system in the first octant, that is, the
positive roots.

\begin{small}
\begin{verbatim}
LIB "signcond.lib";
ring r=0, (x,y), dp;
poly f1=-y^4-x^5*y^4+x^2*y^8+x^3;
poly f2=-(3*(1/2)+8)*y^4-(1/2)*x^5*y^4+(2*(1/2)+8)*x^2*y^8+2*x^3*y^5;
poly f3=-(3*(8/7)+8)*y^4-(8/7)*x^5*y^4+(2*(8/7)+8)*x^2*y^8+2*x^3*y^5;
ideal i2 = f1,f2;
ideal j2 = std(i2);
firstoct(j2);
3
ideal i3 = f1,f3;
ideal j3 = std(i3);
firstoct(j3);
1
\end{verbatim}
\end{small}%
For $c=\frac{1}{2}$, the condition in~\cite{MFRCSD} to ensure exactly one positive solution is trivially not 
satisfied (as expected since the system has $3$ positive solutions). 

Observe that this procedure is symbolic and thus certified. This computation of the number of positive solutions with 
  the command  {\small {\tt firstoct}} works for
moderately sized polynomial systems with coefficients in $\mathbb Q$ or an algebraic extension of it. Our results
are particularly useful to study families of polynomials.
\end{example}

\section{Dominating matrices} \label{sec:alg}

In this section, we present some conditions on $A$ and $C$ that guarantee that the hypotheses of Theorem 
\ref{basicmainresult} are satisfied. Our main result is Theorem~\ref{th:BdominatingCsamesign}.

We first present conditions that guarantee that a matrix $A$ admits a choice of a 
Gale dual matrix $B$, which satisfies condition (1) of Theorem \ref{basicmainresult}, for \emph{any} uniform matrix of 
coefficients $C$ satisfying~\eqref{eq:nonempty} (which means that it does not depend on $I_C$). When $A\in\Z^{(d+1)\times n}$ 
we will relate these conditions with complete intersection lattice ideals in Section~\ref{sec:integer}.

We recall some definitions from \cite{FS}, with the difference that we replace 
rows by columns and allow matrices with real entries.

\begin{definition}\label{def:dominating}
  A vector is said to be mixed if contains a strictly positive and a strictly negative coordinate.
More generally, a 
real matrix is called mixed if every column contains a strictly 
positive and a strictly negative entry. A real matrix is called dominating if 
it contains no square mixed submatrix. An empty matrix is mixed and also dominating. 
\end{definition}

For example any matrix with the following sign pattern
\begin{small}
\[\begin{pmatrix} + & + & + \\ + & 0 & - \\ + & - & 0 \\ - & 0 & 0  \end{pmatrix}\]
\end{small}%
is mixed and dominating. When a matrix is mixed and dominating we will say that it is \textit{mixed dominating}.


Observe that since a matrix $A$ as in 
\eqref{eq:overlineA} has a row of 
ones, 
the columns of any Gale dual matrix $B$ add up to zero, and thus $B$ is always mixed. 
Also note that a mixed matrix is weakly mixed 
(see Definition~\ref{def:weaklymixed}), but the converse is not true in general as a weakly 
mixed matrix can also contain a column with only zero entries.

We now show that for uniform matrices $A$ admitting a dominating Gale dual matrix $B$, 
condition (1) in Theorem~\ref{basicmainresult} is automatically satisfied for any uniform matrix $C$ having a positive vector
 in the kernel.
 
\begin{lemma}\label{lemma:nominormixed}
Assume that $A\in\R^{(d+1)\times n}$ is a uniform matrix. If $B\in\R^{n\times k}$ is a Gale dual 
matrix of $A$ which is dominating, then condition (1) of Theorem~\ref{basicmainresult}
 is satisfied for all $C\in\R^{d\times n}$ uniform satisfying ${\bf 0}\in\mathcal C^\circ$. 
\end{lemma}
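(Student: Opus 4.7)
The plan is to argue by contradiction: suppose $B_L$ is weakly mixed for some $L\in\mathcal{F}(\Delta_P)$, and produce a square mixed submatrix of $B$, contradicting the dominating hypothesis. The linchpin will be a rank count that forces at least $|L|$ columns of $B_L$ to be mixed.

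First I would record two consequences of the uniformness assumptions via Gale duality. Since $A$ is uniform iff $\ker(A)\cap\R^{S}=0$ for every $S\subset\{1,\dots,n\}$ with $|S|=d+1$, and the columns of $B$ give an isomorphism $\R^k\to\ker(A)$, this translates into the projection of $\ker(A)$ onto the coordinates indexed by $S^c$ being an isomorphism; equivalently, every $k\times k$ submatrix of $B$ is invertible. Symmetrically, uniformness of $C$ yields that every $(k+1)\times(k+1)$ submatrix of $D$ is invertible, so the linear hyperplanes $\{P_i\cdot y=0\}_{i\in L}$ in $\R^{k+1}$ meet only at the origin whenever $|L|\geq k+1$; slicing by $y_0=1$ then gives $F_L=\emptyset$. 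Hence any $L\in\mathcal{F}(\Delta_P)$ (read as a nonempty proper face, which is the only case relevant to hypothesis~(1) of Theorem~\ref{basicmainresult}) satisfies $1\leq|L|\leq k$.

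Now fix such an $L$ and assume $B_L$ is weakly mixed. Let $S\subset\{1,\dots,k\}$ be the set of column indices $j$ for which column $j$ of $B_L$ is mixed; the weakly mixed hypothesis forces the remaining columns of $B_L$ to be identically zero, so the rows of $B_L$ lie in the coordinate subspace $\R^S\subseteq\R^k$ and $\rk(B_L)\leq|S|$. Extending $L$ to any $k$-subset $L^{\ast}$ of row indices, $B_{L^{\ast}}$ is invertible by the first step, so its rows are linearly independent and hence so are those of $B_L$; this gives $\rk(B_L)=|L|$. Combining the two bounds yields $|S|\geq|L|$, and restricting $B_L$ to any $|L|$-element subset $S'\subseteq S$ of columns produces a square $|L|\times|L|$ submatrix of $B$ each of whose columns is mixed, contradicting the dominating hypothesis.

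The main technical step is the preparatory Gale duality identification of uniformness of $A$ with invertibility of every $k\times k$ submatrix of $B$; once this is in hand, the rest is routine rank arithmetic together with the dimensional bookkeeping for the face $F_L$.
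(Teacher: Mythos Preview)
Your proof is correct and follows essentially the same approach as the paper's. Both arguments show that if $B_L$ were weakly mixed then one could extract an $|L|\times|L|$ mixed square submatrix, using uniformity of $A$ to force enough nonzero (hence mixed) columns in $B_L$ and uniformity of $C$ to bound $|L|\le k$. The only cosmetic difference is packaging: you front-load the Gale-dual reformulation ``$A$ uniform $\Leftrightarrow$ every $k\times k$ minor of $B$ is invertible'' and then read off $\rk(B_L)=|L|$ directly, whereas the paper argues equivalently that too many zero columns in $B_L$ would produce $k-|L|+1$ independent vectors in $\ker(A_{\setminus L})$, forcing a rank drop.
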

\begin{proof}
Let $C\in\R^{d\times n}$ uniform, and take any Gale dual matrix $D\in\R^{n\times(k+1)}$ 
of $C$ such that $\Delta_P$ is nonempty and bounded (which exists due to  Lemma \ref{lemma:inCp}
and Corollary \ref{bounded}).

If $L=\{\ell\} \in \mathcal F(\Delta_P)$, then 
$B_L$ 
weakly mixed means that it has only zeros, which forces the matrix $A$ minus the $\ell$-th column
to have rank $< d+1$.
Consider $L \in \mathcal F(\Delta_P)$ such that $|L| \geq 2$. Note that $|L| \leq k$ since $C$ is uniform.
If $B_L$ is weakly mixed then at least $k-|L|+1$ columns of $B_L$ contain only zero entries, for otherwise we would get a square 
submatrix of size $|L|\times |L|$
containing a positive and a negative coefficient in each column. But if $k-|L|+1$ columns of $B_L$ contain only zeros, then we 
get $k-|L|+1$ linearly independent vectors in the kernel of
the matrix $A_{\setminus L}$ obtained from $A$ by removing the columns indexed by $L$. 
This forces the matrix $A_{\setminus L}$ to have rank less than $d+1$, which is a contradiction, since $A$ is uniform.
\end{proof}

The following results will be useful. These propositions are only stated for matrices with integer 
coefficients in \cite{FS}, but clearly the proofs given in that paper also work for real matrices.

\begin{prop}[\cite{FS}, Corollary 2.7 and 2.8]\label{prop:mixeddominatingli} If a real matrix is mixed dominating, then any nonzero linear 
combination of its columns is a mixed vector. In particular, its columns are linearly independent.
\end{prop}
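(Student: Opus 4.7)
My plan is to prove the proposition by induction on the number of columns $k$ of $M$. The base case $k=1$ is immediate, since a single mixed column remains mixed under any nonzero scaling. For the induction step, take a nonzero $\lambda \in \R^k$ and set $v = M\lambda$. I first perform two reductions: if some $\lambda_j=0$, the result follows from the induction hypothesis applied to the mixed dominating submatrix of $M$ obtained by deleting column $j$ (column deletion preserves both mixedness and dominating); otherwise, by replacing $m_j$ with $-m_j$ whenever $\lambda_j < 0$ (which again preserves mixed dominating, since negating a column neither destroys mixedness nor creates a mixed square submatrix where one did not exist), I may assume $\lambda>0$ strictly. Assuming for contradiction that $v$ is not mixed, after possibly negating all data I may further suppose $v \geq 0$ componentwise. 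The goal then becomes: exhibit a square mixed submatrix of $M$, contradicting the dominating hypothesis.

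A first observation is that $n > k$; otherwise, any choice of $k$ of the $n$ columns yields an $n \times n$ (or smaller) square submatrix with every column mixed, contradicting dominating. I next analyze the rows $I^0 = \{i : v_i = 0\}$. For each $i \in I^0$ the identity $\sum_j \lambda_j M_{ij}=0$ together with $\lambda > 0$ forces row $i$ of $M$ to be either identically zero or to contain both a strictly positive and a strictly negative entry. After discarding zero rows of $M$ (which preserves all hypotheses and does not alter the image of $M$), every row in $I^0$ is mixed.

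The heart of the argument, and what I expect to be the main obstacle, is the extraction of a square mixed submatrix from this structural data. In the subcase $v=0$, one verifies that every nonzero entry of every column necessarily lies in $I^0$ (since outside $I^0$ no cancellation can occur with $\lambda > 0$ and $v_i = 0$), so the restriction $M^{I^0,[k]}$ inherits the mixed property column by column, and a suitable choice of rows and columns produces a square mixed submatrix. The subcase $v \neq 0$ is more delicate because the positive and negative entries of a column may now spread into $I^+ = [n] \setminus I^0$; here the combinatorial extraction carried out in Proposition 2.6 and Corollary 2.7 of \cite{FS} is what delivers the required mixed square submatrix, and since that argument manipulates only sign patterns and never uses integrality, it transfers verbatim to the real setting.

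Finally, the linear independence of the columns is an immediate corollary of the main statement: a nontrivial dependence $M\lambda=0$ with $\lambda \neq 0$ would present the zero vector as a nonzero linear combination of the columns, and $0$ is not mixed, contradicting what we just proved.
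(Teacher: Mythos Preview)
The paper does not supply its own proof of this proposition: it is stated as a citation of Corollaries~2.7 and~2.8 of \cite{FS}, with only the remark that those proofs, written for integer matrices, carry over verbatim to real matrices. Your proposal ultimately makes the same appeal: for the decisive subcase $v \neq 0$ you explicitly invoke ``the combinatorial extraction carried out in Proposition~2.6 and Corollary~2.7 of \cite{FS}'' together with the observation that only sign patterns are used. So in substance your approach and the paper's coincide.

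Your preliminary reductions --- deleting a column when some $\lambda_j=0$ and applying induction, negating columns to force $\lambda>0$, and globally negating to force $v\geq 0$ --- are all correct. There is, however, a genuine gap in your direct handling of the subcase $v=0$. When $v=0$ one has $I^0=[n]$, so $M^{I^0,[k]}$ is simply $M$, and the assertion that ``a suitable choice of rows and columns produces a square mixed submatrix'' is precisely the nontrivial step that needs proof. A mixed matrix need not contain any square mixed submatrix (for instance the $4\times 3$ matrix with first row $(1,1,1)$ and remaining rows $-e_1,-e_2,-e_3$ is mixed and dominating). You do have the extra information that every surviving row is mixed, but exploiting this to extract a square mixed submatrix is exactly the combinatorial lemma from \cite{FS} that you cite for the other subcase. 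Your case split therefore does not isolate an easy case: both branches rest on \cite{FS}.
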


The proof of this proposition is a consequence of Proposition 2.6 in~\cite{FS}, which implies that the set 
of indices for the positive entries of any nonzero linear combination of the columns of a dominating matrix contains the set of indices of the positive or negative
entries of some column of the matrix. Moreover, they show that the following is true.

\begin{prop}[\cite{FS2}, Proposition 4.1]\label{prop:mixeddominating0} The left kernel of any mixed 
dominating real matrix contains a positive vector.
\end{prop}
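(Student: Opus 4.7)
My plan is to deduce this result from Proposition~\ref{prop:mixeddominatingli} via a classical theorem of the alternative, namely Stiemke's lemma. Let $M \in \R^{n \times k}$ be a mixed dominating matrix; the goal is to produce $v \in \R^n$ with every coordinate strictly positive satisfying $v^{T} M = 0$.

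First I would record Stiemke's dichotomy for $M$: exactly one of the following holds:
\begin{enumerate}
\item[(a)] there exists $v \in \R^n$ with $v>0$ coordinatewise and $v^{T} M = 0$;
\item[(b)] there exists $w \in \R^k$ with $Mw \ge 0$ coordinatewise and $Mw \ne 0$.
\end{enumerate}
Alternative (a) is precisely the desired conclusion, so the entire task reduces to excluding (b).

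Next I would rule out (b) using the previous proposition. If such a $w$ existed, then $Mw$ would be a nonzero linear combination of the columns of $M$ all of whose entries are nonnegative; in particular $Mw$ would not be a mixed vector. This directly contradicts Proposition~\ref{prop:mixeddominatingli}, which says that every nonzero column combination of a mixed dominating matrix is mixed. Hence (b) is impossible and Stiemke's lemma forces (a), yielding the required strictly positive element of the left kernel of $M$.

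The only genuine obstacle is cosmetic: picking the correct form of the theorem of the alternative so that the hypothesis furnished by Proposition~\ref{prop:mixeddominatingli} exactly negates the unwanted alternative. If one prefers to stay internal to the paper, the same dichotomy can be obtained by separating the column cone of $M$, which by Proposition~\ref{prop:mixeddominatingli} intersects the closed nonnegative orthant only at the origin, from the nonnegative orthant via a supporting hyperplane through $0$; the normal of such a hyperplane, suitably oriented, is the desired positive vector. This is Stiemke's lemma in the case at hand, but it keeps the argument self-contained.
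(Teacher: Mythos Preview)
Your argument is correct. Note, however, that the paper does not supply its own proof of this proposition: it is simply quoted from \cite{FS2}, so there is no in-paper argument to compare against. What you have written is a clean derivation of the cited result from Proposition~\ref{prop:mixeddominatingli} via Stiemke's lemma, and it works exactly as you say: if some $Mw$ were nonzero and coordinatewise nonnegative, it would fail to be mixed, contradicting Proposition~\ref{prop:mixeddominatingli}; hence the other alternative holds and a strictly positive left-kernel vector exists.

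One small wording issue in your closing paragraph: the set $\{Mw : w \in \R^k\}$ is the column \emph{space} of $M$, a linear subspace, not a cone; your separation sketch still goes through (a subspace meeting $\R_{\ge 0}^n$ only at the origin admits a strictly positive normal vector), but this is precisely Stiemke's lemma, so invoking it directly, as you do in the main argument, is the tidier route.
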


%
%

We will also need the following Lemma.

\begin{lemma}
\label{addpositive vector}
Assume that $C\in\R^{d\times n}$ has maximal rank $d$ and that ${\bf 0}\in\mathcal C^\circ$.
Let $\tilde{D} \in \R^{n \times k}$ be any matrix of maximal rank $k$ such that $C \tilde{D}=0$. Assume that
\begin{equation}\label{condbis}
{\bf 0} \in \R_{>0}\tilde{P}_1+\cdots+\R_{>0}\tilde{P}_{n},
\end{equation}
where $\tilde{P}_1,\ldots, \tilde{P}_{n}$ are the row vectors of $\tilde{D}$.
Then, there exists a positive vector $D_0$ in the kernel of $C$ which does not belong to the 
linear span of the column vectors of $\tilde{D}$, and
the matrix $D \in \R^{n \times (k+1)}$ obtained from $\tilde{D}$ by adding $D_0$
as a first column vector is Gale dual to $C$ and satisfies $(1,0,\ldots,0)\in {\mathcal C}_{P}$.
\end{lemma}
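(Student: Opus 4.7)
The plan is to explicitly construct $D_0$ from the positive relations guaranteed by the two hypotheses, and then do a dimension count and a scaling argument to finish.

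First, I would unpack the two positivity hypotheses as giving two positive vectors. Since ${\bf 0}\in\mathcal{C}^\circ$ and $C$ has maximal rank $d$, there exist strictly positive $\mu_1,\ldots,\mu_n>0$ with $\sum_i \mu_i C_i = {\bf 0}$, i.e.\ the column vector $\mu=(\mu_1,\dots,\mu_n)^T$ is a positive element of $\ker(C)$. Dually, condition~\eqref{condbis} gives strictly positive $\lambda_1,\ldots,\lambda_n>0$ with $\sum_i \lambda_i \tilde{P}_i = 0$, i.e.\ the row vector $\lambda=(\lambda_1,\dots,\lambda_n)$ lies in the left kernel of $\tilde{D}$.

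Next I would take $D_0 := \mu$ and show $D_0 \notin \operatorname{col.\,span}(\tilde{D})$ by a straightforward pairing. If $\mu = \tilde{D} v$ for some $v \in \R^k$, then
\[
\lambda \mu = \lambda \tilde{D} v = 0 \cdot v = 0,
\]
which contradicts $\lambda\mu = \sum_i \lambda_i \mu_i > 0$ (all factors strictly positive). Since $\ker(C)$ has dimension $n-d = k+1$, adjoining $D_0$ to the $k$ linearly independent columns of $\tilde{D}$ produces $k+1$ linearly independent vectors in $\ker(C)$, so the matrix $D=[D_0\mid\tilde{D}]\in\R^{n\times(k+1)}$ is Gale dual to $C$.

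Finally I would verify $(1,0,\ldots,0)\in\mathcal{C}_P$. With this choice of $D$, the rows are $P_i=(\mu_i,\tilde{P}_i)$, so a combination $\sum_i \alpha_i P_i$ equals $(1,0,\ldots,0)$ precisely when $\sum_i \alpha_i \mu_i = 1$ and $\sum_i \alpha_i \tilde{P}_i = 0$. Setting
\[
\alpha_i := \frac{\lambda_i}{\sum_j \lambda_j \mu_j}
\]
does the job: the denominator is positive since all $\lambda_j,\mu_j>0$, each $\alpha_i$ is strictly positive, the first identity holds by construction, and the second holds because $\sum_i \lambda_i \tilde{P}_i = 0$. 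This exhibits $(1,0,\ldots,0)$ as a strictly positive combination of $P_1,\ldots,P_n$, hence in $\mathcal{C}_P$. I do not anticipate a real obstacle here; the only point requiring attention is ensuring strict (not just nonnegative) positivity at every step, which is why both hypotheses must be used as open-cone statements rather than closed-cone ones.
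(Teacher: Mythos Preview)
Your proof is correct and follows essentially the same line as the paper's: extract a positive $\lambda$ in the left kernel of $\tilde{D}$ from \eqref{condbis}, a positive $D_0\in\ker(C)$ from ${\bf 0}\in\mathcal C^\circ$, and then observe that $\lambda\cdot D=(\lambda\cdot D_0,0,\ldots,0)$ with $\lambda\cdot D_0>0$. Your pairing argument showing $D_0\notin\operatorname{col.\,span}(\tilde D)$ is in fact a bit crisper than the paper's dimension-count phrasing, but the strategy is the same.
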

\begin{proof}
By \eqref{condbis} there exists a positive vector in the left kernel of $\tilde{D}$, in other words, a row vector $\lambda$ with positive coordinates
\footnote{In fact, it is sufficient that $\lambda$ is a nonzero vector with only nonnegative coordinates.}
such that $\lambda \cdot \tilde{D}=(0,\ldots,0)$.
Since ${\bf 0} \in \mathcal C^\circ$ we have $\ker(C) \cap \R_{>0}^{n} \neq \emptyset$. Then, as
$\ker(C)$ has dimension $k+1$ and $\tilde{D}$ has rank $k$, there exists a vector 
$D_0 \in \ker(C)\cap \R_{>0}^{n}$ which does not belong to the linear span of 
the column vectors of $\tilde{D}$. The matrix $D \in \R^{n \times k}$ obtained from 
$\tilde{D}$ by adding $D_0$ as a first column vector is Gale dual to $C$. Moreover, we 
have $\lambda \cdot D=( \lambda \cdot D_0, 0, \ldots, 0)$ and thus $(1,0,\ldots,0) \in  {\mathcal C}_{P}$ 
since $\lambda \cdot D_0 >0$ (here $\lambda$ is a row vector, $D_0$ is a column vector so that 
$\lambda \cdot D_0$ is a real number, which is positive since $\lambda$ and
$D_0$ are positive vectors).
\end{proof}

If $S\subset\R^n$ if a subspace, we denote $\sign(S)=\{\sign(v)\, : \, v\in S\}$.
Recall that we denote the column vectors of a matrix $B$ by $B_1, \dots, B_k$. 

\begin{thm}\label{th:BdominatingCsamesign}  Let $\mathcal{A}=\{a_1,\dots,a_{n}\}\subset \R^d$.
Assume $A\in\R^{(d+1)\times n}$ as in~\eqref{eq:overlineA} and  $C\in\R^{d\times n}$ are uniform matrices. 
Suppose there exist a dominating Gale dual matrix $B\in\R^{n\times k}$ of $A$. 
Assume ${\bf 0}\in {\mathcal 
C}^{\circ}$ and $\sign(B_j)\in \sign(\ker (C))$ for each $j=1,\dots, k$. Then, $n_{\mathcal{A}}(C)>0$.
\end{thm}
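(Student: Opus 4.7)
The plan is to apply Theorem~\ref{basicmainresult} to a carefully chosen Gale dual matrix $D$ of $C$. First I would observe that $B$ is automatically \emph{mixed}: since the first row of $A$ is $(1,\dots,1)$, every column of $B$ sums to zero, and since $B$ has rank $k$ no column vanishes, so each column of $B$ contains both strictly positive and strictly negative entries. Combined with the hypothesis that $B$ is dominating, this makes $B$ mixed dominating, and Lemma~\ref{lemma:nominormixed} at once yields condition~(1) of Theorem~\ref{basicmainresult}.

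The construction of $D$ is the heart of the argument. For each $j=1,\dots,k$, the assumption $\sign(B_j)\in\sign(\ker(C))$ produces a vector $v_j\in\ker(C)$ with $\sign(v_j)=\sign(B_j)$; let $\tilde D\in\R^{n\times k}$ be the matrix whose columns are $v_1,\dots,v_k$. The key observation is that both being mixed and being dominating depend only on the sign pattern of a matrix, so $\tilde D$ inherits the mixed dominating property from $B$. Proposition~\ref{prop:mixeddominatingli} then forces the columns of $\tilde D$ to be linearly independent, so $\tilde D$ has maximal rank $k$, and Proposition~\ref{prop:mixeddominating0} exhibits a positive vector in the left kernel of $\tilde D$, which is exactly condition~\eqref{condbis} of Lemma~\ref{addpositive vector}. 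That lemma supplies a positive column $D_0\in\ker(C)$ for which $D=(D_0\mid\tilde D)$ is a Gale dual matrix of $C$ satisfying $(1,0,\dots,0)\in\mathcal{C}_P$, and Corollary~\ref{bounded} then gives that $\Delta_P$ is full dimensional and bounded.

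Finally I would verify condition~(2) of Theorem~\ref{basicmainresult}. By construction $\sign(d_{ij})=\sign(b_{ij})$ for all $i$ and all $j\in\{1,\dots,k\}$, so $b_{ij}\,d_{ij}\geq 0$ and the implication ``$b_{ij}=0$ forces $d_{ij}=0$'' are automatic. For the remaining bullet, for each $i\in I_C$ we need some $j\in\{1,\dots,k\}$ with $b_{ij}\neq 0$, i.e.\ no row of $B$ is identically zero; this is a consequence of $A$ being uniform, since a zero row of $B$ would place the corresponding standard basis vector in the row span of $A$, contradicting the fact that any $d+1$ columns of $A$ are linearly independent. The desired conclusion $n_{\A}(C)>0$ then follows from Theorem~\ref{basicmainresult}. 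The step I expect to be the main subtlety is the construction of $\tilde D$: once one recognizes that mixed dominating is purely a sign-theoretic property, the Firla--Ziegler type Propositions~\ref{prop:mixeddominatingli} and~\ref{prop:mixeddominating0} transfer from $B$ to $\tilde D$ essentially for free, and the other verifications become routine bookkeeping.
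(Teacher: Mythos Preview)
Your proposal is correct and follows essentially the same route as the paper's proof: build $\tilde D$ from vectors in $\ker(C)$ matching the sign pattern of the columns of $B$, observe that $\tilde D$ is mixed dominating so Propositions~\ref{prop:mixeddominatingli} and~\ref{prop:mixeddominating0} apply, extend to a Gale dual $D$ via Lemma~\ref{addpositive vector}, and invoke Theorem~\ref{basicmainresult} after Lemma~\ref{lemma:nominormixed}. Your write-up is in fact slightly more explicit than the paper's in verifying the three bullets of condition~(2) (the paper simply says ``by construction'' there); your justification that no row of $B$ vanishes because $A$ is uniform is correct and matches the $|L|=1$ case inside the proof of Lemma~\ref{lemma:nominormixed}.
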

\begin{proof} As $B$ is dominating
 and $A,C$ are uniform, condition (1) of Theorem~\ref{basicmainresult} is satisfied by 
Lemma~\ref{lemma:nominormixed}. 
As $\sign(B_j)\in \sign(\ker(C))$ for $j=1,\dots, k$, there exist vectors  $D_1,\dots, D_k$ in $\ker(C)$ such that 
$\sign(D_j)=\sign (B_j)$ for each $j=1,\dots,k$. Consider the matrix $\tilde{D}$ with column vectors $D_1,\ldots,D_k$.
Since $B$ is mixed dominating (it is mixed since $A$ contains a row of ones) the matrix $\tilde{D}$
is mixed dominating and furthermore $\tilde{D}$ has rank $k$ by Proposition~ \ref{prop:mixeddominatingli}. 
Moreover, by Proposition~\ref{prop:mixeddominating0}, there is a positive vector
in the left kernel of $\tilde{D}$. Then, condition \eqref{condbis} is satisfied, and thus by 
Lemma \ref{addpositive vector} and Corollary \ref{bounded},
there is a positive vector $D_0$ such that the matrix $D$ with column vectors $D_0,\ldots,D_k$ is Gale dual to $C$
and the associated polytope $\Delta_P$ is nonempty and bounded. By construction, 
condition (2) of Theorem~\ref{basicmainresult} is also satisfied, and thus $n_{\mathcal{A}}(C)>0$.
\end{proof}

Recall that the support of a vector $v\in\R^n$ is defined to be the set of its nonzero coordinates, and we denote it by $\supp(v)$. 
Given a subspace $S\subset\R^n$, a circuit of $S$  is a nonzero element $s\in S$ with minimal support (with respect to inclusion).
 Given a vector $v$, a circuit $s=(s_1,\dots,s_n)$ is said to be conformal to $v=(v_1,\dots,v_n)$ if for any index $i$ in $\supp(s)$, 
  $\sign(s_i) = \sign(v_i)$. The next lemma shows that if $A$ admits a Gale dual mixed dominating matrix, then there exist a choice 
  of Gale mixed dominating matrix of $A$ whose columns are circuits of $\ker(A)$. Note that all the circuits of $\ker(A)$ can be described 
  in terms of vectors of maximal minors of $A$, and so they only depend on the \emph{associated oriented matroid} of $A$.

\begin{lemma}\label{lemma:dominatingcircuits} Assume $A\in\R^{(d+1)\times n}$ as in \eqref{eq:overlineA}. 
Suppose there exist a dominating Gale dual matrix $B\in\R^{n\times k}$ of $A$. 
Then, there exists a dominating Gale dual matrix $B'\in\R^{n\times k}$ of $A$ such that every column of $B'$ is a circuit of $\ker(A)$.
\end{lemma}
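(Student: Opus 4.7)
The plan is to replace each column of $B$ by a circuit of $\ker(A)$ that is conformal to it, and then check that the resulting matrix $B'$ is still Gale dual and still dominating.

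First, I would invoke the standard conformal decomposition theorem from oriented matroid theory: for any nonzero vector $v$ in a linear subspace $S \subset \R^n$, there exist circuits $s^{(1)},\dots,s^{(m)}$ of $S$, each conformal to $v$, such that $v = s^{(1)} + \cdots + s^{(m)}$. Applying this to each column $B_j$ of $B$ (viewed as an element of $\ker(A)$), I pick any one circuit $B'_j$ of $\ker(A)$ that is conformal to $B_j$. Let $B'\in\R^{n\times k}$ be the matrix with columns $B'_1,\dots,B'_k$. By construction, every entry of $B'$ is either zero or has the same sign as the corresponding entry of $B$.

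Next I would verify that $B'$ is dominating. Suppose, for contradiction, that $B'$ contains a square mixed submatrix $M'$, say $M' = B'_{I,J}$ with $|I|=|J|$. For each $j\in J$, the column $M'_{\cdot,j}$ contains a strictly positive entry $B'_{i^+,j}>0$ and a strictly negative entry $B'_{i^-,j}<0$ with $i^\pm \in I$. Since nonzero entries of $B'$ share the sign of the corresponding entries of $B$, we have $B_{i^+,j}>0$ and $B_{i^-,j}<0$. Hence $M = B_{I,J}$ is a square mixed submatrix of $B$, contradicting that $B$ is dominating.

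Finally, I would show that $B'$ is itself a Gale dual matrix, i.e., that its columns form a basis of $\ker(A)$. The columns of $B'$ lie in $\ker(A)$ by construction. Moreover, since the first row of $A$ is a row of ones (see \eqref{eq:overlineA}), every nonzero vector in $\ker(A)$ has coordinates summing to zero, hence must contain both a strictly positive and a strictly negative entry; in particular each column $B'_j$ (being a nonzero element of $\ker(A)$) is mixed, so $B'$ is a mixed matrix. Combined with the fact that $B'$ is dominating, Proposition~\ref{prop:mixeddominatingli} yields that the $k$ columns of $B'$ are linearly independent. Since $\dim \ker(A)=n-(d+1)=k$, these columns form a basis of $\ker(A)$, so $B'$ is Gale dual to $A$. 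The main subtlety here is the existence of conformal circuits in step one; everything else reduces to the sign-preservation of the conformal replacement together with Proposition~\ref{prop:mixeddominatingli}.
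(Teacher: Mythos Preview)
Your proof is correct and follows essentially the same approach as the paper: replace each column of $B$ by a conformal circuit of $\ker(A)$ (via Rockafellar's conformal decomposition), observe that sign-preservation keeps the matrix dominating, use the row of ones in $A$ to see $B'$ is mixed, and then apply Proposition~\ref{prop:mixeddominatingli} to obtain linear independence and hence Gale duality. Your write-up is simply a more detailed version of the paper's argument.
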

\begin{proof} It is a known result that every vector in $\ker(A)$ can be written as a nonnegative sum of 
circuits conformal to it (see~ \cite{rockafellar}). In particular, for every vector in $\ker(A)$, there exists a 
circuit conformal to it. For each column $B_i$ of $B$, $i=1,\dots,k$, take a circuit $B'_i$ of $\ker(A)$ such
 that $B'_i$ is conformal to $B_i$. Now, we take $B'$ the matrix with columns $B'_1,\dots,B'_k$. Every column of $B'$ 
 is a circuit of $\ker(A)$, $B'$ is mixed since $A$ has a row of ones, and is dominating because $B'_i$ is
  conformal to $B_i$ for each $i=1,\dots,k$ and the matrix $B$ is dominating. Since $B'$ is mixed dominating, 
  the columns of $B'$ are linearly independent by Proposition~\ref{prop:mixeddominatingli}, and then $B'$ is a Gale dual matrix of $A$.
\end{proof}

\section{Geometric conditions on $A$ and $C$}\label{sec:geom}

The main result of this section is Theorem~\ref{th:Icompatible}, where we give geometric conditions on $A$ and $C$ that guarantee 
that the hypotheses of Theorem~\ref{basicmainresult} are satisfied.

A characterization of matrices $A$ admitting a mixed dominating Gale dual matrix $B$ can be found in~\cite{FS2}. 
Recall that our definition of mixed dominating matrix differs from the one in \cite{FS2} by replacing rows by columns. 
Here we present this result with our notation. We denote the convex hull of a point configuration $\mathcal{A}$ by $\ch(\mathcal{A})$. 
Recall also that we assume $n \geq d+2$, so that $\mathcal A$ cannot be the set of vertices of a $d$-dimensional simplex. 

\begin{thm}[\cite{FS2}, Theorem 5.6 ]\label{teo5.6}
Let $\mathcal{A}=\{a_1,\dots,a_n\}\subset\R^d$, with $n\geq d+2$ and $A\in\R^{(d+1)\times n}$ as in~\eqref{eq:overlineA}.
 Then $A$ admits a mixed dominating Gale dual matrix $B$ if and only if
$\mathcal A$ can be written as a disjoint union $\mathcal A={\mathcal A}_{1} \sqcup {\mathcal A}_{2}$ such that
\begin{enumerate}
\item the polytopes $\ch({\mathcal A}_{1})$ and $\ch({\mathcal A}_{2})$
intersect in exactly one point,
\item the corresponding matrices $A_1$ and $A_{2}$ as in \eqref{eq:overlineA} of $\mathcal{A}_1$ and 
$\mathcal{A}_2$ respectively, admit mixed dominating Gale dual matrices, and
\item $\dim \ch({\mathcal A})= \dim \ch({\mathcal A}_{1})+ \dim \ch({\mathcal A}_{2})$.
\end{enumerate}
\end{thm}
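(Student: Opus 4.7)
The plan is to prove both directions by relating the combinatorial structure of a mixed dominating matrix to the affine geometry of $\mathcal A$, with the backward direction constructive and the forward direction by induction on the codimension $k=n-d-1$, the number of columns of the Gale dual $B$.

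For the direction $(\Leftarrow)$, I would set $n_i=|\mathcal A_i|$, $d_i=\dim\ch(\mathcal A_i)$, and $k_i=n_i-d_i-1$, so that condition (3) yields $k_1+k_2+1=k$, exactly the column count needed. Pick mixed dominating Gale duals $B_1,B_2$ of $A_1,A_2$ supplied by (2), pad them by zeros to $n\times k_1$ and $n\times k_2$ matrices supported on the disjoint row blocks corresponding to $\mathcal A_1$ and $\mathcal A_2$, and append a final column that encodes the intersection point from (1). Concretely, writing the unique intersection point in barycentric coordinates with respect to each $\mathcal A_i$ gives strictly positive vectors $\mu\in\R^{n_1}$, $\nu\in\R^{n_2}$ with $\sum_i\mu_i=\sum_j\nu_j$ and $\sum_i\mu_i a_i=\sum_j\nu_j a_j$, so that the column $(\mu,-\nu)$ lies in $\ker A$. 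The resulting $B$ has full column rank $k$ and every column is mixed; the dominating property is verified by case analysis on whether a would-be square mixed submatrix involves the final column --- if not, the zero pattern confines it inside one of the two blocks, contradicting domination of $B_i$, and if so, the strict positivity of $\mu,\nu$ combined with (3) forces an all-signed column among the other selected ones.

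For the direction $(\Rightarrow)$, the base $k=1$ is the circuit case: $B$ is a single mixed column $\lambda\in\ker A$ with $\sum_i\lambda_i=0$, yielding the Radon relation $\sum_{\lambda_i>0}\lambda_i a_i=\sum_{\lambda_i<0}(-\lambda_i)a_i$ of equal total weight. Setting $\mathcal A_1=\{a_i:\lambda_i>0\}$ and $\mathcal A_2=\{a_i:\lambda_i<0\}$ (and distributing the $\lambda_i=0$ points to preserve the dimension equality (3)), the normalized Radon point witnesses (1), and each $\mathcal A_i$ has codimension zero, so its Gale dual is the empty matrix, vacuously mixed dominating, giving (2). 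For the inductive step $k\geq 2$, I would apply Proposition~\ref{prop:mixeddominating0} to $B$ to obtain a strictly positive left-kernel vector, and then combine the absence of any square mixed submatrix with Proposition~\ref{prop:mixeddominatingli} to locate a column $B_j$ whose zero rows partition $\{1,\dots,n\}$ into two blocks, each of which inherits a mixed dominating Gale dual by restriction after deleting column $j$. The induction hypothesis then supplies decompositions of each piece, which glue along the unique Radon point that column $B_j$ provides.

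The main obstacle is the inductive splitting in $(\Rightarrow)$: exhibiting a column $B_j$ whose zero/sign pattern cleanly separates the row index set, and verifying that both resulting subconfigurations still admit mixed dominating Gale duals. This is precisely where the no-square-mixed-submatrix hypothesis, as opposed to merely the requirement that every column of $B$ is mixed, becomes indispensable; without it, one can produce sign patterns whose rows cross-intersect in a way that obstructs any geometric decomposition, whereas domination rules out exactly these cross-intersections and forces the block-triangular structure needed to complete the induction.
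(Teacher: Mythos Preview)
The paper does not prove this statement: Theorem~\ref{teo5.6} is quoted from \cite{FS2} (Theorem~5.6 there) and used as a black box, so there is no proof in the present paper to compare your proposal against.

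For what it is worth, your overall architecture is the one in \cite{FS2}: in $(\Leftarrow)$ one assembles $B$ block-wise from $B_1$, $B_2$ and one extra column encoding the common point, and in $(\Rightarrow)$ one inducts on $k$ via a structural splitting of mixed dominating matrices. Two places in your sketch are not yet solid. In $(\Leftarrow)$ you claim the barycentric vectors $\mu,\nu$ are strictly positive; nothing in hypotheses (1)--(3) prevents the unique common point from lying on a proper face of $\ch(\mathcal A_1)$ or $\ch(\mathcal A_2)$, so some entries of $(\mu,-\nu)$ may vanish and your domination case analysis, which relies on that strict positivity, needs adjustment. In the inductive step of $(\Rightarrow)$, the assertion that one can ``locate a column $B_j$ whose zero rows partition $\{1,\dots,n\}$ into two blocks, each of which inherits a mixed dominating Gale dual'' is exactly the heart of the matter and is not established by invoking Propositions~\ref{prop:mixeddominatingli} and~\ref{prop:mixeddominating0} alone; what is actually needed is the block-triangular structure theorem for mixed dominating matrices proved in \cite{FS2}, and your sketch gestures at it without supplying the argument.
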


Moreover, we have:

\begin{lemma} [\cite{FS2}, Corollary 5.7]
If $A$ admits a mixed dominating Gale dual matrix $B$ then $\ch({\mathcal A})$ has at most $2d$ vertices.
\end{lemma}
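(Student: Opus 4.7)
The plan is to proceed by induction on $n=|\mathcal{A}|$, using Theorem~\ref{teo5.6} for the inductive step. The base case is $n=d+1$: here $\mathcal{A}$ is the vertex set of a $d$-simplex, and its $d+1$ vertices satisfy $d+1\leq 2d$ for $d\geq 1$. (The empty Gale dual is mixed dominating by convention, so the hypothesis is compatible with this base case.)

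For the inductive step $n\geq d+2$, I would apply Theorem~\ref{teo5.6} to obtain a disjoint decomposition $\mathcal{A}=\mathcal{A}_1\sqcup\mathcal{A}_2$, with both $\mathcal{A}_i$ nonempty, $d_1+d_2=d$ for $d_i=\dim \ch(\mathcal{A}_i)$, and with each $A_i$ admitting a mixed dominating Gale dual by condition~(2). The key vertex-counting step is the observation that any vertex $v$ of $\ch(\mathcal{A})$ is an extreme point of $\mathcal{A}$, so by disjointness it belongs to a unique $\mathcal{A}_i$, and there it must again be an extreme point, hence a vertex of $\ch(\mathcal{A}_i)$. Thus the number of vertices of $\ch(\mathcal{A})$ is at most the sum of those of $\ch(\mathcal{A}_1)$ and $\ch(\mathcal{A}_2)$. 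When both $d_1,d_2\geq 1$, the induction hypothesis (applicable since $n_i<n$, possibly through its own base case when $n_i=d_i+1$) gives the bound $2d_1+2d_2=2d$.

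The main obstacle is the edge case when one piece is zero-dimensional, say $\mathcal{A}_1=\{a\}$ with $d_1=0$ and $d_2=d$; induction on the dimension would be stuck here, which is precisely why the induction is on $n$. In this situation, condition~(1) of Theorem~\ref{teo5.6} forces $a\in\ch(\mathcal{A}_2)$, so $\ch(\mathcal{A})=\ch(\mathcal{A}_2)$ and $a$ cannot be a vertex of $\ch(\mathcal{A})$. Since $n_2=n-1<n$ and $A_2$ admits a mixed dominating Gale dual, the induction hypothesis applied to $\mathcal{A}_2$ (or the base case, if $n_2=d+1$) yields the bound $2d$ on the number of vertices of $\ch(\mathcal{A})$, completing the proof.
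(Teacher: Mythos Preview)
Your argument is correct. The paper does not give its own proof of this lemma; it is quoted from \cite{FS2} (Corollary~5.7 there), so there is nothing in the present paper to compare against. Your induction on $n$ via Theorem~\ref{teo5.6} is the natural argument and is essentially the one in \cite{FS2}: split $\mathcal{A}=\mathcal{A}_1\sqcup\mathcal{A}_2$, observe that every vertex of $\ch(\mathcal{A})$ is a vertex of some $\ch(\mathcal{A}_i)$, and add the two bounds. Your handling of the degenerate case $d_1=0$ is also correct, since condition~(1) forces the single point of $\mathcal{A}_1$ into $\ch(\mathcal{A}_2)$.

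One small point worth making explicit: when you invoke the induction hypothesis on $\mathcal{A}_i$, the configuration sits in $\R^d$ but has affine dimension $d_i<d$, so strictly speaking the lemma (as stated for full-rank $A$) does not apply directly. You should either reformulate the statement in terms of $\dim\ch(\mathcal{A})$ rather than the ambient $d$, or note that after choosing affine coordinates on the $d_i$-dimensional affine span of $\mathcal{A}_i$ the associated matrix $A_i$ has full rank $d_i+1$ and the Gale dual (hence the mixed dominating property) is unchanged. This is routine, but without it the inductive call is not quite well-posed.
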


In particular, by Lemma~\ref{lemma:nominormixed}, we have:
 
\begin{corollary}\label{lemma:pointsinchullofacircuitorsimplexmoregenral} Let $\mathcal{A}=\{a_1,\dots,a_n\}\subset\R^d$. 
Assume that $A$ as in \eqref{eq:overlineA} is uniform and that $\mathcal{A} \subset\R^d$ can be decomposed as a disjoint union 
$\mathcal A={\mathcal A}_{1} \sqcup {\mathcal A}_{2}$ such that conditions (1), (2) and (3) of Theorem~\ref{teo5.6} hold.
Then, there exists a Gale dual matrix $B\in\Z^{n\times k}$ of $A$ such that condition (1) of Theorem~\ref{basicmainresult} is satisfied.
\end{corollary}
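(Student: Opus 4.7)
The plan is to apply two earlier results in sequence, since this corollary is really just a straightforward composition. There is no substantive new work to do.

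First, I would appeal to Theorem~\ref{teo5.6} (the result of Fischer--Shapiro cited from \cite{FS2}). The hypothesis of the corollary is exactly that $\mathcal{A}$ admits the disjoint decomposition $\mathcal{A} = \mathcal{A}_1 \sqcup \mathcal{A}_2$ satisfying conditions (1), (2), and (3) in the statement of that theorem. By the ``if'' direction of Theorem~\ref{teo5.6}, this guarantees the existence of a mixed dominating Gale dual matrix $B$ of $A$.

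Second, I would note that any mixed dominating matrix is in particular dominating (the ``mixed'' condition is an additional requirement; dropping it only enlarges the class). Hence $B$ is a dominating Gale dual matrix of $A$. Since by hypothesis $A$ is uniform, Lemma~\ref{lemma:nominormixed} applies: for every uniform $C \in \R^{d\times n}$ with ${\bf 0} \in \mathcal{C}^\circ$, the chosen Gale dual $B$ has the property that $B_L$ is not weakly mixed for every $L \in \mathcal{F}(\Delta_P)$, which is exactly condition~(1) of Theorem~\ref{basicmainresult}. Crucially, the matrix $B$ is produced from $A$ alone, independently of $C$, so the same $B$ works uniformly over all admissible coefficient matrices.

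There is essentially no obstacle: the corollary is stated precisely so that Theorem~\ref{teo5.6} produces the mixed dominating Gale dual, and Lemma~\ref{lemma:nominormixed} converts ``dominating'' into condition~(1). The only mild subtlety worth flagging explicitly in the write-up is to make clear that ``mixed dominating $\Rightarrow$ dominating'' so that the hypothesis of Lemma~\ref{lemma:nominormixed} is met, and that the $B$ provided does not depend on $C$, so that a single choice of $B$ suffices for the conclusion.
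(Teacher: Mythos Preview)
Your proposal is correct and matches the paper's approach exactly: the paper presents this corollary with the preface ``In particular, by Lemma~\ref{lemma:nominormixed}, we have,'' so the intended argument is precisely Theorem~\ref{teo5.6} followed by Lemma~\ref{lemma:nominormixed}, as you describe.
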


The following observation says that if we have a point configuration $\mathcal{A}_v\subset\R^d$ 
such that the corresponding matrix $A_v$ admits a Gale dual mixed dominating matrix, then,  for any other 
point configuration $\mathcal{A}\subset\R^d$ that contains $\mathcal{A}_v$ and their convex hulls 
$\ch (\mathcal{A}), \ch (\mathcal{A}_v)$ coincide
 (that is, $\mathcal{A}$ can be obtained from 
$\mathcal{A}_v$ adding points inside the convex hull), the corresponding matrix $A$ also admits a 
Gale dual mixed dominating matrix.

\begin{lemma}\label{lemma:pointsinchullofacircuitbis} Let $\mathcal{A}=\{a_1,\dots,a_n\}$, 
$\mathcal{A}_v\subset \R^d$ be two point configurations such that $\mathcal{A}_v\subset\mathcal{A}$. 
Assume that the corresponding matrix $A\in\R^{(d+1)\times n}$ 
is uniform and that the following conditions hold:
\begin{enumerate}
\item $\ch (\mathcal{A})=\ch (\mathcal{A}_v)$
\item The corresponding matrix $A_v \in \R^{(d+1) \times |A_v|}$ has a Gale dual matrix $B_v$ which is dominating.
\end{enumerate}
Then, there exists a a mixed dominating Gale dual matrix $B\in\Z^{n\times k}$ of $A$ and thus condition (1) 
of Theorem~\ref{basicmainresult} is satisfied.\end{lemma}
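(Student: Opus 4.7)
The plan is to enlarge the given Gale dual matrix $B_v$ of $A_v$ by appending one new column for each extra point $a_i\in \mathcal{A}\setminus \mathcal{A}_v$, encoding the convex-combination relation that expresses $a_i$ in terms of the points of $\mathcal{A}_v$. Concretely, after reindexing, write $\mathcal{A}_v=\{a_1,\dots,a_m\}$ and $\mathcal{A}\setminus\mathcal{A}_v=\{a_{m+1},\dots,a_n\}$. Since $\ch(\mathcal{A})=\ch(\mathcal{A}_v)$, each $a_i$ with $i>m$ can be written as $a_i=\sum_{j=1}^m \lambda_{ij}\,a_j$ with $\lambda_{ij}\ge 0$ and $\sum_j\lambda_{ij}=1$. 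For each such $i$ define $w_i\in\R^n$ by $(w_i)_j=-\lambda_{ij}$ for $j\le m$, $(w_i)_i=1$, and zero otherwise. The two conditions on the $\lambda_{ij}$ translate exactly into $A\cdot w_i=0$.

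Next, let $B'_v\in\R^{n\times k_v}$ be obtained from $B_v$ by appending $n-m$ zero rows; its columns still lie in $\ker(A)$. Define $B$ as the block matrix
\[
B \;=\; \bigl( B'_v \,\big|\, w_{m+1}\ \cdots\ w_n \bigr)
\;=\;
\begin{pmatrix} B_v & -\Lambda \\ 0 & I_{n-m} \end{pmatrix},
\]
which has $k_v+(n-m)=n-d-1=k$ columns. The block triangular shape shows the columns are linearly independent (a vanishing combination first kills all $w_i$-coefficients via the identity block, then kills the $B'_v$-coefficients via the full column rank of $B_v$), so $B$ is a Gale dual matrix of $A$. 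Moreover $B$ is mixed: each column sums to zero (because $A$ has a row of ones), and each column is nonzero, so it must contain both a positive and a negative entry.

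The core step is to check that $B$ is dominating. Suppose for contradiction that there is a nonempty square submatrix on rows $R=R_1\sqcup R_2$ (with $R_1\subset\{1,\dots,m\}$, $R_2\subset\{m+1,\dots,n\}$) and columns $K=K_1\sqcup K_2$ (with $K_1$ indexing $B_v$-columns and $K_2$ indexing $w$-columns) that is mixed. In any column from $K_2$ the entries lying in $R_1$ are nonpositive, so the required positive entry must come from the identity block in $R_2$; this forces the row index $i$ of that $w_i$-column to lie in $R_2$, giving $|R_2|\ge|K_2|$ and hence $|R_1|\le|K_1|$. Columns from $K_1$ vanish on $R_2$, so their mixedness in the submatrix amounts to the mixedness of the corresponding column of $(B_v)_{R_1,K_1}$. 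Selecting any $|R_1|$ columns of this matrix yields a square submatrix of $B_v$ all of whose columns are mixed, contradicting the hypothesis that $B_v$ is dominating. (The degenerate case $R_1=\emptyset$ forces $K_1=\emptyset$ and reduces the submatrix to one of $I_{n-m}$, which cannot be mixed.) Thus $B$ is mixed dominating, and since $A$ is uniform, Lemma~\ref{lemma:nominormixed} immediately gives condition~(1) of Theorem~\ref{basicmainresult}.

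The main obstacle is the dominance verification: the block structure makes it easy to \emph{construct} $B$ and to see that it is mixed, but one must carefully track how a hypothetical square mixed submatrix of $B$ distributes between the $B_v$-rows/columns and the newly added rows/columns in order to extract a forbidden square mixed submatrix of $B_v$.
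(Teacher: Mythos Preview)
Your construction is exactly the paper's: after reordering, build
\[
B=\begin{pmatrix} B_v & R \\ 0 & I_{n-m}\end{pmatrix}
\]
with $R$ having nonpositive entries coming from the convex relations expressing each extra point in terms of $\mathcal A_v$, then invoke Lemma~\ref{lemma:nominormixed}. The only differences are cosmetic. First, the paper uses uniformity of $A$ to place each $a_i$ in the \emph{interior} of a $d$-simplex with vertices in $\mathcal A_v$, so that the entries of $R$ are strictly negative; you use an arbitrary convex combination, giving only nonpositive entries. Your dominance argument shows this weaker sign information already suffices, so nothing is lost. Second, the paper dismisses the dominance of $B$ with ``clearly'', whereas you supply the actual argument (the injection $K_2\hookrightarrow R_2$ via the identity block forces $|R_1|\le |K_1|$, and the $K_1$-columns then produce a square mixed submatrix of $B_v$). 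That verification is correct, including the boundary case $R_1=\emptyset$.
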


Note that Lemma~\ref{lemma:pointsinchullofacircuitbis} follows from applying several times 
Theorem~\ref{teo5.6} (taking one point from ${\mathcal A}_{v}$ as ${\mathcal A}_{2}$), but 
we present a constructive proof. 

\begin{proof}[Proof of Lemma~\ref{lemma:pointsinchullofacircuitbis}]
Without loss of generality, we may assume that $\mathcal{A}_v=\{a_1,\ldots,a_{s}\}$, with $s \geq d$.
For $i=s+1,\ldots,n$, there exists a subset $\mathcal{A}_i $ of $\mathcal{A}_v$ such that $\mathcal{A}_i $ 
is the set of vertices of a $d$-simplex and $a_i$ is contained in the interior of $\ch(\mathcal{A}_i)$. Then there exists 
an affine relation on $\mathcal{A}_i \cup \{a_i\}$ where the coefficient of $a_i$ is equal to one and the coefficients 
of the points of $\mathcal{A}_i $ are all negative. Using the affine relations on $\mathcal{A}_v$ given by the column vectors
of $B_v$, we get $k$ linearly independent vectors in the kernel of $A$ which are the column vectors of a upper triangular 
block matrix $B$ Gale dual to $A$ of the following form:
\[
B=\left(
\begin{array}{cc}
B_v & R \\
0 & I_{n-s}
\end{array}
\right)
,\]
where $R$ has only nonpositive entries (and at least two negative entries in each column) and $ I_{n-s}$ is the identity matrix of size $n-s$.
Clearly, if $B_v$ is dominating then $B$ is dominating and thus by Lemma~\ref{lemma:nominormixed} 
the first item of Theorem~\ref{basicmainresult} is satisfied.
\end{proof}

We have the following corollary.

\begin{corollary}\label{lemma:pointsinchullofacircuitorsimplex} Let $\mathcal{A}=\{a_1,\dots,a_n\}$, 
$\mathcal{A}_v\subset \R^d$ be two point configurations such that $\mathcal{A}_v\subset\mathcal{A}$ 
and $\ch (\mathcal{A})=\ch (\mathcal{A}_v)$. Assume that the corresponding matrix $A$ is uniform and that 
$\mathcal{A}_v $ is either the set of vertices of $d$-simplex, or a circuit in $\R^d$. Then, there exists a Gale dual 
matrix $B\in\Z^{n\times k}$ of $A$ such that condition (1) 
of Theorem~\ref{basicmainresult} is 
satisfied.
\end{corollary}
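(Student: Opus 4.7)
The plan is to reduce the statement to Lemma~\ref{lemma:pointsinchullofacircuitbis}. Since $\mathcal A_v \subset \mathcal A$ with coinciding convex hulls and $A$ uniform, the only remaining hypothesis to verify in Lemma~\ref{lemma:pointsinchullofacircuitbis} is that the matrix $A_v \in \R^{(d+1) \times |\mathcal A_v|}$ admits a dominating Gale dual matrix $B_v$. I treat the two cases separately.

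Case 1: $\mathcal A_v$ is the set of vertices of a $d$-simplex. Then $|\mathcal A_v| = d+1$ and $A_v$ is a square invertible matrix, so its kernel is trivial and its codimension is $k_v = 0$. The associated Gale dual matrix $B_v$ is therefore the empty matrix, which is dominating by the convention in Definition~\ref{def:dominating}.

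Case 2: $\mathcal A_v$ is a circuit in $\R^d$. Then $|\mathcal A_v| = d+2$, and by definition of a circuit every $d+1$ of its points are affinely independent, i.e.\ $A_v \in \R^{(d+1) \times (d+2)}$ is uniform. Hence $\ker(A_v)$ is one-dimensional and a Gale dual matrix $B_v$ is a single column vector in $\R^{d+2}$ with no zero entry (uniformity of $A_v$ forbids a zero coordinate, since a vanishing coordinate would give a nontrivial linear relation among $d+1$ columns of $A_v$). Because $A_v$ has a row of ones, the entries of $B_v$ sum to zero; combined with the absence of zero entries, this forces $B_v$ to contain both a strictly positive and a strictly negative entry, i.e.\ $B_v$ is mixed. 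For the dominating property: $B_v$ has only one column, so every square submatrix is $1\times 1$, and a $1\times 1$ submatrix can never be mixed (a single real number cannot be simultaneously positive and negative). Thus $B_v$ contains no square mixed submatrix and is dominating.

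In both cases $A_v$ has a dominating Gale dual matrix, so Lemma~\ref{lemma:pointsinchullofacircuitbis} applies and yields a Gale dual matrix $B$ of $A$ for which condition~(1) of Theorem~\ref{basicmainresult} holds. There is essentially no obstacle; the only subtlety is remembering that the empty matrix is dominating by convention and that for a single-column matrix the dominating condition reduces to a triviality, so that uniformity of $A_v$ (automatic in both cases) already gives everything we need.
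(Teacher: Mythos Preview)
Your proof is correct and is precisely the intended argument: the paper states this result as an immediate corollary of Lemma~\ref{lemma:pointsinchullofacircuitbis} without giving a separate proof, and the only thing to check is that $A_v$ admits a dominating Gale dual in the two listed cases, which you handle exactly as one would expect (empty Gale dual for the simplex, a single nonzero column for the circuit).
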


Consider $\mathcal{A}$ and the point configuration $\mathscr{C}=\{C_1,\dots,C_{n}\}$ given by the columns of the 
coefficient matrix $C$. Consider the $(d+1)\times n$-matrix
\[\bar{C}=\begin{pmatrix} 1 & \cdots & 1\\ & C & \end{pmatrix}\]
and assume that $A$ and $\bar{C}$ are uniform.
Given a subset $J\subset\{1,\dots,n\}$, we denote $\mathcal{A}_J=\{a_j\, : \, j\in J\}$.

\begin{definition}\label{def:compatible}
Given a subset $I\subset \{1,\dots,n\}$, we say that $A$ and $C$ are \emph{$I$-compatible} if the following conditions hold:
\begin{enumerate} 
\item The corresponding matrices $A_I$ and $\overline{C}_I$ admit Gale dual matrices 
which are mixed dominating and have the same sign pattern,
\item $\ch(\mathcal{A}_I)=\ch(\mathcal{A})$ and $\ch(\mathscr{C}_I)=\ch(\mathscr{C})$,
\item For each $j\notin I$, there exist $J\subset I$, with $|J|=d+1$, such that $a_{j}\in \ch(\mathcal{A}_{J})$ and 
$C_j\in \ch({\mathscr{C}}_{J})$.
\end{enumerate}
\end{definition}

The condition that $A$ and $C$ are \emph{$I$-compatible} can be translated in terms of signs of maximal minors of $A$ 
and $\bar{C}$. Also note that the configurations $\mathcal{A}$ and $\mathscr{C}$ may have different oriented matroids.
The following Example~\ref{ex:Icompatible} shows two $I$-compatible configurations with different oriented matroids.

\begin{example}\label{ex:Icompatible} We show in Figure~\ref{fig:Icompatible} 
an example of two point configurations, $\mathcal{A}=\{a_1,\dots,a_6\}$ 
and ${\mathscr{C}}=\{C_1,\dots,C_6\}$ with $d=2$ and $k=3$, which are $I$-compatible, for $I=\{1,2,3,4\}$.
In this case 
$a_5\in ch(\mathcal{A}_{I_5})$, $C_5\in ch({\mathscr{C}}_{I_5})$ for $I_5=\{1,3,4\}$ and $a_6\in ch(\mathcal{A}_{I_6})$, 
$C_6\in ch({\mathscr{C}}_{I_6})$ for $I_6=\{1,2,3\}$.

\end{example}

\begin{figure}[h]\label{fig:Icompatible}
	\centering
	\begin{tikzpicture}
	[scale=1.600000,
	back/.style={loosely dotted, thin},
	edge/.style={color=black, thick},
	facet/.style={fill=red!70!white,fill opacity=0.800000},
	vertex/.style={inner sep=1pt,circle,draw=black,fill=blue,thick,anchor=base
	}]
	%
	%
	\coordinate (0.00000, 0.00000) at (0.00000, 0.00000);
	\coordinate (0.00000, 1.00000) at (0.00000, 1.00000);
	\coordinate (2.00000, 0.00000) at (2.00000, 0.00000);
	\coordinate (2.00000, 1.00000) at (2.00000, 1.00000);
	\coordinate (1.00000, 2.00000) at (1.00000, 2.00000);
	\coordinate (1.00000, 3.00000) at (1.00000, 3.00000);
%
	\draw[edge] (0,0) -- (0.5,1);
	\draw[edge] (0.5,1) -- (1.5,1.5);
	\draw[edge] (1.5,1.5) -- (2.5,0);
	\draw[edge] (2.5,0) -- (0,0);
	\draw[edge,dashed] (1.5,1.5) -- (0,0);
	\draw[edge,dashed] (0.5,1) -- (2.5,0);
	\draw[edge] (4,0) -- (6.5,0);
	\draw[edge] (6.5,0) -- (6,1.5);
	\draw[edge] (6,1.5) -- (4.5,1.5);
	\draw[edge] (4.5,1.5) -- (4,0);
	\draw[edge,dashed] (4,0) -- (6,1.5);
	\draw[edge,dashed] (4.5,1.5) -- (6.5,0);
	
	\node[vertex] at (0.5,1){};
	\node[vertex] at (1.5,1.5){};
	\node[vertex] at (2.5,0){};
	\node[vertex] at (0,0){};
	\node[vertex] at (1,0.5){};
	\node[vertex] at (1.5,1){};
	\node[] at (0.3,1.1){$a_1$};
	\node[] at (1.7,1.6){$a_2$};
	\node[] at (-0.2,0){$a_4$};
	\node[] at (2.7,0){$a_3$};
	\node[] at (0.9,0.4){$a_5$};
	\node[] at (1.4,0.9){$a_6$};
	\node[vertex] at (4.5,1.5){};
	\node[vertex] at (5.25,1.25){};
	\node[vertex] at (5.25,0.5){};
	\node[vertex] at (6,1.5){};
	\node[vertex] at (6.5,0){};
	\node[vertex] at (4,0){};
	
	\node[] at (4.3,1.6){$C_1$};
	\node[] at (5.05,1.3){$C_5$};
	\node[] at (5.25,0.3){$C_6$};
	\node[] at (6.2,1.6){$C_4$};
	\node[] at (6.7,0){$C_3$};
	\node[] at (3.8,0){$C_2$};

	\end{tikzpicture}
	\caption{$A$ and $C$ are $I$-compatible for $I=\{1,2,3,4\}$.}
\end{figure}
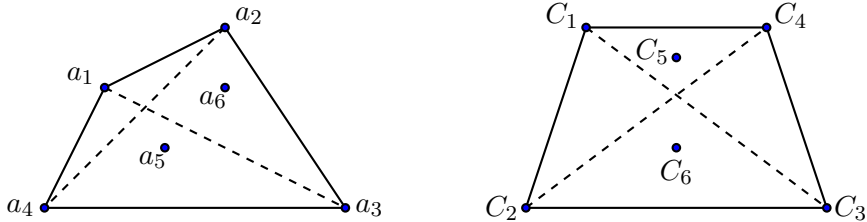


We  have the following result:

\begin{thm}\label{th:Icompatible} Assume that $A$, $C$ and $\bar{C}$ are uniform.
Suppose ${\bf 0}\in {\mathcal C}^{\circ}$, 
and  there exists $I\subset \{1,\dots,n\}$ such that $A$ and $C$ are $I$-compatible. Then, 
$n_{\mathcal{A}}(C)>0$.
\end{thm}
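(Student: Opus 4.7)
The plan is to apply Theorem~\ref{basicmainresult}. I will construct Gale dual matrices $B\in\R^{n\times k}$ of $A$ and $D=(D_0\mid\tilde D)\in\R^{n\times(k+1)}$ of $C$ so that $B$ is dominating and shares its sign pattern with $\tilde D$; this will force both conditions of Theorem~\ref{basicmainresult}.

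By $I$-compatibility I first choose mixed dominating Gale dual matrices $B_I$ of $A_I$ and $\tilde D_I$ of $\bar C_I$ with identical sign patterns. For each $j\notin I$, the hypothesis furnishes a common index set $J_j\subset I$ with $|J_j|=d+1$ such that $a_j\in\ch(\mathcal A_{J_j})$ and $C_j\in\ch(\mathscr C_{J_j})$. Uniformity of $A$ and $\bar C$ forces both $a_j$ and $C_j$ into the \emph{interiors} of their simplices (if some barycentric coordinate vanished, then $a_j$ or $C_j$ would lie in the affine span of $d$ points of the configuration, giving $d+1$ affinely dependent points in $\mathcal A$ or $\mathscr C$). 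Consequently the barycentric coordinates $\lambda^{(j)}_i,\mu^{(j)}_i$ for $i\in J_j$ are all strictly positive. Following the proof of Lemma~\ref{lemma:pointsinchullofacircuitbis}, I extend both matrices in block-triangular form
\[
B=\begin{pmatrix}B_I & R_B\\ 0 & I_{n-|I|}\end{pmatrix},\qquad
\tilde D=\begin{pmatrix}\tilde D_I & R_D\\ 0 & I_{n-|I|}\end{pmatrix},
\]
where the column of $R_B$ (resp.\ $R_D$) indexed by $j\notin I$ has entries $-\lambda^{(j)}_i$ (resp.\ $-\mu^{(j)}_i$) at rows $i\in J_j$ and zeros elsewhere. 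In particular $R_B$ and $R_D$ share the same sign pattern (strictly negative on $J_j$, zero off it), and therefore so do $B$ and $\tilde D$.

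Next I promote $\tilde D$ to a Gale dual of $C$. The row space of $\bar C$ contains the all-ones vector, so the left kernel of $\tilde D$ meets $\R_{>0}^n$, i.e., condition~\eqref{condbis} holds. Lemma~\ref{addpositive vector} then produces a positive $D_0\in\ker(C)$ making $D=(D_0\mid\tilde D)$ Gale dual to $C$ with $(1,0,\dots,0)\in\mathcal C_P$, and Corollary~\ref{bounded} guarantees that $\Delta_P$ is nonempty, full-dimensional and bounded. Since $B$ is dominating (by the block argument in the proof of Lemma~\ref{lemma:pointsinchullofacircuitbis}), Lemma~\ref{lemma:nominormixed} gives condition (1) of Theorem~\ref{basicmainresult}. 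For condition (2), fix $i\in I_C$: the matching sign patterns immediately yield $b_{ij}d_{ij}\geq 0$ for $j=1,\dots,k$ and $b_{ij}=0\Leftrightarrow d_{ij}=0$, while the existence of $j$ with $b_{ij}d_{ij}>0$ reduces to showing that the $i$-th row of $\tilde D$ is nonzero. If this row vanished, then $e_i$ would lie in the row space of $\bar C$, placing the $n-1\geq d+1$ remaining columns of $\bar C$ in a hyperplane and contradicting uniformity of $\bar C$.

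The main obstacle is precisely the sign-matching of the extension columns $R_B$ and $R_D$. This is what $I$-compatibility engineers: a \emph{common} index set $J_j$ ensures that the two affine relations have identical support, and uniformity of $A$ and $\bar C$ forces strict positivity of the barycentric coordinates, so the signs align entry by entry. Once this matching is in place, the rest is a routine verification against the hypotheses of Theorem~\ref{basicmainresult}.
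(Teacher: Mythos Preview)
Your proof is correct and follows essentially the same route as the paper: both build the block-triangular dominating Gale dual $B$ of $A$ via Lemma~\ref{lemma:pointsinchullofacircuitbis} and a matching-sign matrix $\tilde D$ with columns in $\ker(\bar C)$, then pass to Theorem~\ref{basicmainresult}. The only difference is packaging: the paper invokes Theorem~\ref{th:BdominatingCsamesign} (whose proof performs exactly your Lemma~\ref{addpositive vector}/Corollary~\ref{bounded} step and the verification of condition~(2)), whereas you inline that argument. One small point you leave implicit is that $\tilde D$ has rank $k$, required by Lemma~\ref{addpositive vector}; this is immediate since $\tilde D$ shares the mixed dominating sign pattern of $B$ and hence has linearly independent columns by Proposition~\ref{prop:mixeddominatingli}. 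Your nonzero-row argument via uniformity of $\bar C$ is fine, though the same conclusion follows more directly from uniformity of $A$ (no row of $B$ can vanish) together with the identical sign patterns.
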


\begin{proof} Let $B_I$ be a Gale dual matrix of $A_I$ as in Condition (1) of Definition~\ref{def:compatible}.
As $a_{j}\in \ch(\mathcal{A}_I)$ for each $j\notin I$, we can use Lemma~\ref{lemma:pointsinchullofacircuitbis}. 
We construct a dominating matrix $B$, using the matrix $B_I$ and using for each $a_{j}$, 
$j\notin I$, the affine relation given by the circuit $a_{j}\cup\mathcal{A}_{J}$, with $J$ as in Condition (3) 
of Definition~\ref{def:compatible}, to obtain a vector in the kernel of $A$ as in the proof of Lemma~\ref{lemma:pointsinchullofacircuitbis}. 
Conditions (1) and (3) of the definition of being $I$-compatible mean that there exist $k$ vectors in the kernel of $\bar{C}$ 
with the same sign patterns as the columns of the constructed $B$, and these $k$ 
vectors are linearly independent because they form a mixed dominating matrix
 (Proposition~\ref{prop:mixeddominatingli}). We have that $\ker{(\bar{C})}\subset \ker(C)$, and
$\sign(B_1),\dots,\sign(B_k)\in \sign(\ker (C))$. We can apply Theorem~\ref{th:BdominatingCsamesign} and then 
$n_{\mathcal{A}}(C)>0$.
\end{proof}

\begin{remark} When $|I|=d+2$, condition (1) in Definition~\ref{def:compatible} can be translated in terms of signatures of circuits. 
Given a circuit $\mathcal{U}=\{u_1,\dots,u_{d+2}\}\subset\R^d$, and a nonzero affine relation $\lambda\in\R^{d+2}$ among the 
$u_i$, 
we call $\Lambda_{+}=\{i\in \{1,\dots,d+2\} \, : \, \lambda_i>0\}$ and $\Lambda_{-}=\{i\in \{1,\dots,d+2\} \, : \, 
\lambda_i<0\}$. The pair $(\Lambda_{+}, \Lambda_{-})$ is usually called a \emph{signature} of $\mathcal{U}$. As $\mathcal{U}$ is a 
circuit, the pairs $(\Lambda_{+}, \Lambda_{-})$ and $(\Lambda_{-}, \Lambda_{+})$ are the two possible signatures. Then, we consider 
the (unordered) signature partition $\mathcal{S}(\mathcal{U})=\{\Lambda_{+}, \Lambda_{-}\}$.
Given a subset $I\subset \{1,\dots,n\}$, with $|I|=d+2$, and $A$ and $C$ uniform, condition (1) in 
Definition~\ref{def:compatible} is equivalent to the following condition:
\begin{enumerate}
\item[(1')] $\mathcal{S}(\mathcal{A}_{I})=\mathcal{S}({\mathscr{C}}_{I})$
\end{enumerate}
In this case, condition (3) in Definition~\ref{def:compatible} implies that 
$\{a_{j}\}\cup\mathcal{A}_{J}$ and  $\{C_j\}\cup{\mathscr{C}}_{J}$ have the same 
signature partition, which is $(d+1,1)$.
\end{remark}

\subsection{{The case $k=2$}\label{ssec:k=2}} 
 
The point configurations such that the corresponding matrix admits a Gale dual which is 
dominating are limited. So, if we are not in this case, checking condition (1) of Theorem~\ref{basicmainresult} 
involves knowing the incidences of the facets of the polytope $\Delta_P$.
However, we now show that in case $A$ has codimension $k=2$,  
there always exists a choice of Gale dual matrix  $B$ such that we can conclude that $n_{\mathcal A}(C)>0$ with the help
of Theorem~\ref{basicmainresult} without checking Condition (1) as  it becomes a consequence of the other conditions.

\begin{lemma}\label{lemma:k=2,B, bis} Assume that $A$ and $C$ are uniform matrices and $k=2$. Suppose that ${\bf 0}\in\mathcal C^\circ$.
Then there exists a matrix $B$ Gale dual to $A$ such that for any matrix $D$ Gale dual
 to $C$ for which $\Delta_P$ is nonempty, bounded and the condition (2) of 
 Theorem~\ref{basicmainresult} is satisfied, condition (1) of Theorem~\ref{basicmainresult} is satisfied, and thus $n_{\mathcal A}(C)>0$.
\end{lemma}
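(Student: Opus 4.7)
The plan is to exploit the simple face structure of the 2-dimensional convex polygon $\Delta_P$: its only proper faces relevant to condition~(1) of Theorem~\ref{basicmainresult} are the edges $F_i$ (for $i \in I_C$) and the vertices $F_{\{i,j\}}$ indexed by the pairs $\{i,j\} \in \mathcal V \subset \binom{I_C}{2}$ of consecutive indices in the cyclic order of $I_C$ around the boundary of $\Delta_P$. The singleton cases of condition~(1) are automatic because $A$ uniform forces every row of any Gale dual $B$ to be nonzero. Thus the task reduces to choosing $B$ so that for every $\{i,j\} \in \mathcal V$, the $2 \times 2$ matrix $B_{\{i,j\}}$ is not weakly mixed. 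Geometrically, for nonzero rows this is equivalent to the statement that $B_i,B_j \in \R^2$ do not lie in strictly opposite open quadrants of the coordinate system induced by $B$.

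Next, I would observe that $\mathcal V$ and the cyclic order it induces on $I_C$ are intrinsic to $C$, since they encode the combinatorics of the extreme rays of the three-dimensional cone $\mathcal C_P^\nu$, which depends only on $\ker C$ and not on the specific choice of Gale dual $D$. In particular the list of pairs to verify is fixed once $C$ is given.

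To construct $B$ I would use the following Gale-duality freedom. A rank count, using that $A$ is uniform and $k = 2$, shows that the submatrix of $A$ obtained by removing two columns is $(d{+}1)\times(d{+}1)$ and has nonzero determinant, so $\ker A \cap \{v_i = 0\} \cap \{v_j = 0\} = \{0\}$ for any distinct $i \neq j$. Hence for each index $i_0$ the subspace $\ker A \cap \{v_{i_0} = 0\}$ is one-dimensional, and for distinct $i_0,j_0 \in I_C$ one can select a basis $v^{(1)}, v^{(2)}$ of $\ker A$ whose unique zero coordinates sit at $i_0$ and $j_0$ respectively; by the intersection triviality, these two basis vectors are linearly independent, so $B = (v^{(1)}, v^{(2)})$ is a valid Gale dual. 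In the induced coordinate system, $B_{i_0}$ lies on the $y$-axis and $B_{j_0}$ lies on the $x$-axis, while every other row lies strictly in one of the four open quadrants. For any vertex pair $\{i,j\} \in \mathcal V$ containing $i_0$ or $j_0$, the axis row contributes a column of $B_{\{i,j\}}$ with exactly one zero entry and one nonzero entry, so $B_{\{i,j\}}$ is automatically not weakly mixed. The indices $i_0,j_0$ are to be chosen so as to cover as many pairs of $\mathcal V$ as possible; when $|I_C| \le 4$ (triangle or quadrilateral), $i_0, j_0$ can be chosen in the cyclic order on $I_C$ so that every pair in $\mathcal V$ contains one of them.

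The main obstacle is handling vertex pairs $\{i,j\} \in \mathcal V$ disjoint from $\{i_0,j_0\}$, which arise once $|I_C| \geq 5$; here both $B_i, B_j$ lie strictly inside open quadrants and could \emph{a priori} be in strictly opposite ones. For such pairs we invoke condition~(2), which forces the inward normal $N_i = (d_{i1},d_{i2})$ of the edge $F_i$ of $\Delta_P$ to have sign pattern dominated by that of $B_i$. Combined with the fact that the $N_i$'s positively span $\R^2$ (from $\Delta_P$ bounded) and appear around the origin in the intrinsic cyclic order of $I_C$, the sign domination propagates to the $B_i$'s, forcing them to appear in a compatible cyclic order. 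Angularly-adjacent points consistent with this order cannot realize the strict-opposition configuration, so no remaining pair in $\mathcal V$ violates non-weak-mixedness; Theorem~\ref{basicmainresult} then yields $n_{\mathcal A}(C) > 0$. The delicate point, and hence the main technical difficulty, is the precise choice of $i_0, j_0$ that makes the cyclic-order matching explicit and rules out the bad non-axis pairs in the large-$|I_C|$ case.
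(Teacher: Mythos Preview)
Your framework matches the paper's: reduce condition~(1) to the vertex pairs of the polygon $\Delta_P$, place two selected rows of $B$ on the coordinate axes via a basis change of $\ker A$, and dispose of the singleton and axis-containing pairs exactly as you describe. The divergence is entirely at the step you yourself flag as the ``main technical difficulty'', and there the proposal has a genuine gap.

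The heuristic ``choose $i_0,j_0$ to cover many pairs of $\mathcal V$, then use condition~(2) and cyclic-order compatibility for the rest'' does not close. Under condition~(2) each inward normal $N_i=(d_{i1},d_{i2})$ lies in the closed quadrant of $B_i$, but adjacent normals of a bounded convex polygon are only constrained to be less than~$\pi$ apart; nothing prevents two adjacent $N_i,N_j$ from sitting in opposite \emph{open} quadrants (say the second and the fourth), and then condition~(2) forces $B_i,B_j$ into those same opposite open quadrants, making $B_{\{i,j\}}$ weakly mixed. Your cyclic-order sketch does not exclude this configuration, and for $|I_C|\ge 5$ no choice of two axis indices covers all pairs of~$\mathcal V$.

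The paper supplies exactly the missing choice. Start from any Gale dual $B$, pick $i_2\in I_C$ arbitrarily, and then pick $i_1\in I_C$ so that the open cone $\R_{>0}\,b_{i_1}+\R_{<0}\,b_{i_2}$ contains no $b_i$ with $i\in I_C$ (an angular-neighbor argument shows such $i_1$ exists). After the basis change sending $b_{i_1}\mapsto(1,0)$ and $b_{i_2}\mapsto(0,1)$, the open fourth quadrant $\R_{>0}\times\R_{<0}$ contains no row $b'_i$ with $i\in I_C$. Hence any weakly mixed block $B'_{\{i,j\}}$ with $i,j\in I_C$ must have its two rows in the \emph{first} and \emph{third} open quadrants. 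Now condition~(2) forces $N_{i_1}$ onto the positive $x$-axis and $N_{i_2}$ onto the positive $y$-axis, so $N_i$ (closed first quadrant, direction distinct from both axes since $i\neq i_1,i_2$) is separated from $N_j$ (closed third quadrant) by one of $N_{i_1},N_{i_2}$ in the cyclic order of edge normals; thus $\{i,j\}\notin\mathcal F(\Delta_P)$. The empty-quadrant device is precisely what makes this separation work uniformly, independent of~$|I_C|$, and it is the one idea your plan is missing.
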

\begin{proof}
Let $B$ be any Gale dual matrix of $A$  with row vectors $b_{1}, \ldots, b_{{n}}$. 
Choose any $i_{2 }\in I_{C}$. Then there exist $i_{1} \in I_{C}$ such that the cone $\R_{>0}b_{i_{1}}+\R_{<0}b_{i_{2}}$ does not
contain vectors $b_{i}$ with $i \in I_{C}$. Note that the latter cone has dimension two since 
$A$ is uniform (which implies that $B$ is uniform as well).
There exists a matrix $R$ of rank two 
such that $B_{\{i_{1},i_{2}\}} \cdot R= \I_{2}$ (if we assume that
$A$, $B$ have integer entries, then there exists an integer matrix $R$ of rank two such that $B_{\{i_{1},i_{2}\}} \cdot R= a \cdot \I_{2}$
where $a=|\det(B_{\{i,j\}})|$).
Consider the matrix $B'=B \cdot R$, with row vectors $b'_{1},\ldots,b_{n}'$. Then $B'$ is a 
Gale dual matrix to $A$ such that $b'_{i_{1}}=(1,0)$, $b'_{i_{2}}=(0,1)$
and the open quadrant $\R_{>0} \times \R_{<0}$ does not contain any vector $b_{i}'$ with $i \in I_{C}$.
Note also that if $i \in I_{C}$ and $i \neq i_{1}, i_{2}$ then both coordinates of $b_{i}'$ 
are nonzero for otherwise this would give
a vanishing maximal minor of $B'$. In particular, we get $b_{i}' \neq 0$, and thus $b_{i}'$
 is not weakly mixed, for all
$i \in I_{C}$. Suppose now that there are two distinct vectors $b_{i}'$ and $b_{j}'$ with $i,j \in I_{C}$ 
such that the submatrix $B_{\{i,j\}}$ is weakly mixed. Then these row vectors lie in opposite quadrants 
of $\R^{2}$ and these quadrants should be $\R_{>0}^{2}$ and $\R_{<0}^2$. But then the cone 
$\R_{>0}b_{i}'+\R_{>0}b_{j}'$ contains either $b'_{i_{1}}=(1,0)$ or $b'_{i_{2}}=(0,1)$, and thus
$\{i,j\} \notin F_{L}$.
\end{proof}

Given a vector $v\in \R^n$ and $I\subset \{1,\dots,n\}$ we denote by $v_I\in\R^{|I|}$ the vector obtained from $v$ 
after removing the coordinates with indexes that do not belong to $I$. Given a set $S\subset \R^n$, we denote  $S_I=\{v_I\, :\, v\in S\}$.

Consider the four open quadrants of $\R^2$ numbered from $1$ to $4$, where the signs of the
 two coordinates are $(+,+)$, $(-,+)$, $(-,-)$, and $(+,-)$ for the first, second, third and fourth quadrant respectively.
In case that there exists a Gale dual matrix $B$ with rows in each of the quadrants, we have the following result. 

\begin{lemma}\label{lemma:4quadrants} Given $A\in\R^{(d+1)\times (d+3)}$ 
uniform,
let $B\in\R^{(d+3)\times 2}$ be a Gale dual matrix of $A$.
Suppose there exists rows of $B$, $b_{i_j}$, with $1\leq j \leq 4$, such that $b_{i_j}$ lies in the $j$-th open 
quadrant of $\R^2$. Let $C\in\R^{d\times n}$ uniform. Suppose that ${\bf 0}\in {\mathcal C}^{\circ}$. 
Assume moreover that given a Gale dual matrix of $C$, the row vectors $P_{i_1}, \dots, P_{i_4}$ define
normals to facets of the closure of the cone ${\mathcal C}_P^\nu$ in~\eqref{eq:Cnu}.
If $\sign((B_j)_{I_C})\in\sign((\ker (C))_{I_C})$ for $j=1,2$, then $n_{\mathcal{A}}(C)>0$.
\end{lemma}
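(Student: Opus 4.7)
The plan is to invoke Theorem~\ref{basicmainresult}: I will build an explicit Gale dual matrix $D$ of $C$ so that $\Delta_P$ is nonempty and bounded, and so that both conditions (1) and (2) hold. The key observation is that the four rows $b_{i_1},\dots,b_{i_4}$ in the four open quadrants will force the facets of $\Delta_P$ coming from $i_1,\dots,i_4$ to lie ``one per quadrant'' around the boundary, which in turn will rule out weakly mixed pairs at vertices.

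The construction of $D$ is a direct adaptation of the argument in Theorem~\ref{th:BdominatingCsamesign}. Using the hypothesis $\sign((B_j)_{I_C})\in \sign((\ker C)_{I_C})$ for $j=1,2$, I pick $v_1,v_2\in\ker(C)$ with $\sign((v_j)_{I_C})=\sign((B_j)_{I_C})$ and form $\tilde D=(v_1\mid v_2)$. Linear independence of $v_1,v_2$ follows from evaluating at coordinates $i_1,\dots,i_4\in I_C$, where the four rows of $\tilde D$ already realize all four sign patterns in $\R^2$. The same four rows lie in the four open quadrants of $\R^2$, so $0$ lies in the interior of their convex hull; a bookkeeping step (writing each remaining row $\tilde P_i$ as a positive combination of these four and balancing against a small positive multiple of $\tilde P_i$) gives condition~\eqref{condbis}. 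Lemma~\ref{addpositive vector} then produces a positive vector $D_0\in\ker(C)$ such that $D=(D_0\mid v_1\mid v_2)$ is Gale dual to $C$ with $(1,0,0)\in\mathcal C_P$, and Corollary~\ref{bounded} gives that $\Delta_P$ is nonempty and bounded. Condition~(2) of Theorem~\ref{basicmainresult} is then automatic from $\sign(d_{ij})=\sign(b_{ij})$ for $i\in I_C$, $j=1,2$.

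It remains to verify condition~(1). Since $\Delta_P$ is a polygon in $\R^2$, the relevant faces are facets ($|L|=1$) and vertices ($|L|=2$). Uniformity of $A$ implies uniformity of $B$, so every row of $B$ is nonzero and the singleton case is trivial. For $|L|=2$, uniformity rules out any column of $B_{\{i,j\}}$ being all zero, so $B_{\{i,j\}}$ is weakly mixed exactly when $b_i$ and $b_j$ have opposite signs in both coordinates, i.e.\ lie in strictly opposite open quadrants. Now the inward normal to the facet $F_i$ is $(d_{i1},d_{i2})=((v_1)_i,(v_2)_i)$, which has the same sign pattern as $b_i$; in particular, the inward normals of $F_{i_1},F_{i_2},F_{i_3},F_{i_4}$ lie one in each open quadrant. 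Since facets of a convex polygon are cyclically ordered by their inward normal directions, any facet $F_i$ with $b_i\in Q_1$ is separated from any facet $F_j$ with $b_j\in Q_3$ by $F_{i_2}$ or $F_{i_4}$ (and analogously for $Q_2$ versus $Q_4$). Hence such $F_i$ and $F_j$ are not cyclically adjacent, so $F_i\cap F_j$ is not a vertex, i.e.\ $\{i,j\}\notin\mathcal F(\Delta_P)$. Thus condition~(1) holds and Theorem~\ref{basicmainresult} yields $n_{\mathcal A}(C)>0$.

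The main obstacle I expect is the cyclic-ordering step: one must be careful that the normals are genuinely distinct directions (guaranteed by uniformity of $B$, since parallel rows would give a vanishing $2\times 2$ minor) and that the few rows with a zero entry, which may lie on a coordinate axis rather than strictly inside a quadrant, are still handled correctly. In those cases one checks directly that the corresponding $B_{\{i,j\}}$ cannot be weakly mixed, since the column containing the zero entry then has one zero and one nonzero coordinate. This covers every possible vertex pair and completes condition~(1).
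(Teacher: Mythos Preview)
Your proof is correct and follows essentially the same approach as the paper's: construct $\tilde D$ from vectors in $\ker(C)$ matching the sign pattern of $B$ on $I_C$, invoke Lemma~\ref{addpositive vector} to complete it to a Gale dual $D$ with $\Delta_P$ bounded, and then verify conditions (1) and (2) of Theorem~\ref{basicmainresult}. Your linear independence argument (the four rows of $\tilde D$ at $i_1,\dots,i_4$ hit all four open quadrants, which is impossible if $v_1,v_2$ are proportional) is in fact cleaner than the paper's, which instead perturbs $D_2$ to break dependence; and your verification of condition~(1) via cyclic ordering of inward normals is a more explicit rendering of the paper's terse claim that a mixed $2\times 2$ submatrix cannot correspond to an $L\in\mathcal F(\Delta_P)$.
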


Note that the condition that $P_{i_1}, \dots, P_{i_4}$
define normals to facets  of  the associated cone ${\mathcal C}_P^\nu$  is independent of the choice of Gale dual matrix of $C$. 

\begin{proof} As $\sign((B_j)_{I_C})\in\sign((\ker (C))_{I_C})$ for $j=1,2$, there are vectors $D_1, D_2\in \ker(C)$ such that 
$\sign((D_j)_{I_C})=\sign((B_j)_{I_C})$ for each $j=1,2$. We can assume that $D_1$ and $D_2$ are 
linearly independent. If not, the zero coordinates of $D_1$ and $D_2$ (which are at most two, since $C$ is uniform)
 have to be the same. That is, $(D_1)_j=0$ if and only if $(D_2)_j=0$ (otherwise, they cannot be linearly dependent). 
 Suppose that $(D_1)_j=(D_2)_j=0$ for certain $j$. If $j\in I_C$, then $(B_1)_j=(B_2)_j=0$, but since $A$ is uniform, 
 $B_1$ and $B_2$ have at most one zero coordinate, and then, $B_1$ and $B_2$ are scalar multiples of each other, 
 a contradiction. Then if $(D_1)_j=(D_2)_j=0$, $j\notin I_C$. We take a vector $v$ in $\ker(C)$ such that $D_1$ 
 and $v$ are linearly independent. Then we can take $D'_2=D_2+\lambda v$, with $\lambda$ small enough 
 such that  $\sign((D_2)_{I_C})=\sign((B_2)_{I_C})$.

So, we can suppose that $D_1$ and $D_2$ are linearly independent. Consider the matrix $\tilde{D}$ 
with column vectors $D_1$ and $D_2$. We have that ${\bf 0}$ belongs to the open cone generated 
by the rows of $\tilde{D}$, because the $i_j$-th row of $\tilde{D}$ belongs to the $j$-th open quadrant,
 then Condition~\ref{condbis} of Lemma~\ref{addpositive vector} is satisfied. As ${\bf 0}\in {\mathcal C}^{\circ}$, 
 by Lemma~\ref{addpositive vector} and Corollary~\ref{lemma: can be made bounded}, there exists a 
 positive vector $D_0$ such that the matrix obtained from $\tilde{D}$ by adding $D_0$ as a first column vector
  is Gale dual to $C$ and the associated polytope $\Delta_P$ is nonempty and bounded. Also note that $\Delta_P$ 
  has a facet for each row vector $i_j$ of $\tilde{D}$, each one in the $j$-quadrant of $\R^2$, for $j=1,\dots,4$. 
  Then, if we have a $2\times 2$ mixed submatrix of $B$, it does not correspond to a submatrix $B_L$, with 
$L\in\mathcal{F}(\Delta_P)$ (and any row of $\tilde{D}$ corresponding to $i\in I_C$ is not equal to zero). 
Then, all the conditions of Theorem~\ref{basicmainresult} are satisfied and $n_{\mathcal{A}}(C)>0$.
 
\end{proof}

\section {Algebraic conditions and real solutions of integer configurations}
\label{sec:integer}

In this section we will consider integer configurations $\mathcal A$ and thus, integer 
matrices $A$.  Interestingly,  in Corollary~\ref{cor:ci} we will relate Lemma~\ref{lemma:nominormixed} with known algebraic results
in the study of toric ideals~\cite[Ch.4]{sturmfels}. Indeed, we summarize in \S~\ref{ssec:alg}
some known algebraic results that show the existence of a  mixed dominating
Gale dual matrix is equivalent to the fact that there is a full dimensional
sublattice of the integer kernel ${\rm ker}_\Z(A)$ whose associated lattice ideal~\eqref{eq:li} is a complete
intersection. This means that it can be generated by as many polynomials as the codimension 
of its zero set. In the opposite spectrum, when an ideal is not Cohen-Macaulay   there is no such direct 
relation between algebra and geometry (see for instance~\cite{Eisenbud}, or Chapter 1 in~\cite{St} in the graded case).
When $k=2$, we
also consider lattice ideals  which are not Cohen-Macaulay. Proposition~\ref{prop:nCM}  shows how to deal 
with this more complicated algebraic case. Also, in \S~\ref{ssec:reals} we naturally extend the search for positive solutions 
to the search for real solutions with nonzero coordinates.

\subsection{Algebraic conditions} \label{ssec:alg}
 A polynomial ideal is called \emph{binomial} if it can be generated with polynomials with
at most two terms. A subgroup $\mathcal{L}\subset \Z^n$ is called a \emph{lattice}. 
We associate to a lattice $\mathcal L$ the following binomial
ideal:
\begin{equation}\label{eq:li}
I_{\mathcal{L}}=\langle x^{u^+}-x^{u^-} \, : \, u\in\mathcal{L} \rangle \subset \R[x_1, \dots,x_n],\end{equation}
 where $u=u^+-{u}^-$ is the decomposition in positive and negative components. 
 For example, if $u=(1,-2,1,0)\in\Z^4$, then $x^{u^+}-x^{u^-}=x_1x_3-x_2^2$. 

Given a configuration 
$\mathcal{A}=\{a_1,\dots,a_{n}\}\subset \Z^d$ 
of integral points, and the associated matrix $A\in\Z^{(d+1)\times n}$, let $B\in \Z^{n\times k}$ be a Gale dual matrix of $A$, and 
denote by $B_{1},\dots,B_k$ the column vectors of $B$. 
Note that $\{B_1,\dots,B_k\}$ is a $\Q$-basis of $\ker_{\Z}(A)$, but it is not necessarily a 
$\Z$-basis unless the greatest common divisor of the maximal minors of $B$ is equal to $1$. 
When this is the case, we will say that $B$ is a $\Z$-Gale dual of $A$.
We associate to any choice of Gale dual $B$ of $A$ the following lattice: 
\[\mathcal{L}_B=\Z B= \Z B_1\oplus\cdots\oplus\Z B_k\subset\Z^n,\] 
and its corresponding lattice ideal $I_{\mathcal{L_B}}$.
In particular,  when $\mathcal{L}_B=\ker_\Z(A)$, then the lattice ideal $I_{\mathcal{L}_B}$ is known as the \emph{toric ideal} $I_A$.
We have the following known result from \cite{FS}. See also Theorem~2.1 
of~\cite{MatusevichSobieska}, where the notation is similar to the notation of this paper.

\begin{thm}[\cite{FS}, Theorem 2.9]\label{th:FS} The lattice ideal $I_{\mathcal{L}_{B}}$ is a complete
intersection if and only if $\mathcal{L}_{B} = \mathcal{L}_{B'}$ for some dominating matrix 
$B'\in\Z^{n\times k}$. In this case, $I_{\mathcal{L}_{B}}=\langle x^{u^+}-x^{u^-} \, : \, u \text{ is a column of } B'\rangle$.
\end{thm}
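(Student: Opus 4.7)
The plan is to prove the two directions separately, relying on two standard facts from the theory of lattice ideals: any generating set of $I_{\mathcal L}$ can be chosen to be binomial with exponent vectors in $\mathcal L$, and $I_{\mathcal L}$ has Krull codimension equal to $\operatorname{rank}_\Z(\mathcal L)=k$. The first implication is comparatively soft, while the converse carries essentially all the weight of the theorem.

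For the implication \emph{dominating $B'$ with $\mathcal{L}_{B'}=\mathcal{L}_{B}$ $\Rightarrow$ $I_{\mathcal{L}_{B}}$ is a complete intersection}, I would let $u_1,\ldots,u_k$ be the columns of $B'$ and set $J=\langle x^{u_i^+}-x^{u_i^-} : 1\le i\le k\rangle\subseteq I_{\mathcal{L}_B}$. Since $J$ is generated by $k$ elements, $\operatorname{ht}(J)\le k$; combined with $\operatorname{ht}(I_{\mathcal{L}_B})=k$ this forces $J$ to be unmixed of height $k$. To upgrade $J\subseteq I_{\mathcal{L}_B}$ to equality I would exploit Eisenbud--Sturmfels's combinatorial description of the primary decomposition of a binomial ideal: the associated primes of $J$ that are not associated primes of $I_{\mathcal{L}_B}$ are supported on proper coordinate subspaces $V(x_\ell : \ell\in L)$ and their existence would manifest as an $s\times s$ mixed submatrix $B'_{L',J'}$ of $B'$; the dominating assumption rules this out, yielding $J=I_{\mathcal{L}_B}$, and also giving the explicit generator formula.

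For the implication \emph{$I_{\mathcal{L}_B}$ complete intersection $\Rightarrow$ existence of a dominating $B'$ with $\mathcal{L}_{B'}=\mathcal{L}_B$}, pick a minimal binomial generating set $\{f_i=x^{v_i^+}-x^{v_i^-}\}_{i=1}^{k}$ and let $B'$ be the matrix with columns $v_1,\ldots,v_k$. The lattice equality $\mathcal{L}_{B'}=\mathcal{L}_B$ is the mild part: the inclusion $\mathcal{L}_{B'}\subseteq\mathcal{L}_B$ is immediate, and a strict inclusion would make $I_{\mathcal{L}_{B'}}$ a proper subideal of $I_{\mathcal{L}_B}$ (one passes from $I_{\mathcal{L}_{B'}}$ to $I_{\mathcal{L}_B}$ by saturation at $x_1\cdots x_n$, which acts nontrivially only if $\mathcal{L}_B/\mathcal{L}_{B'}$ is nontrivial), contradicting the equality of their generating sets.

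The main obstacle, and where the technical core of the argument sits, is showing that $B'$ is dominating. I would argue by contradiction: assuming there exist $J\subset\{1,\ldots,k\}$ and $L\subset\{1,\ldots,n\}$ with $|J|=|L|=s$ such that $B'_{L,J}$ is mixed, I would perform the monomial specialization $x_i\mapsto 1$ for $i\notin L$. This sends the regular sequence $\{f_j\}_{j\in J}$ to binomials $\tilde f_j$ in the $s$ variables $\{x_i:i\in L\}$ that are mixed in every variable. Flatness of the localization at $\prod_{i\notin L}x_i$ shows these specializations still form a regular sequence locally at the point $\mathbf 1$. On the other hand, a mixed binomial $\tilde f_j$ vanishes at $\mathbf 1$, and the tangent cone / Jacobian analysis at $\mathbf 1$ shows that the $s$ differentials are the rows of the mixed submatrix $B'_{L,J}$, which is singular over $\mathbb R$ precisely because its columns sum to zero when multiplied by the all-ones vector (this is forced by the mixed property together with the column structure coming from $\ker A$). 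This produces a height drop that contradicts regularity. Wrapping up the bookkeeping on Koszul/depth dimensions at $\mathbf 1$ is the delicate point; once it is in place, the contradiction is immediate and the theorem follows.
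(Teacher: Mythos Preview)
The paper does not prove this statement: Theorem~\ref{th:FS} is quoted verbatim from Fischer--Shapiro \cite{FS} as a known external result (``We have the following known result from \cite{FS}''), and no proof or sketch is given in the paper. So there is no in-paper argument to compare your proposal against.

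That said, your sketch has a genuine gap in the converse direction. You argue that if $B'_{L,J}$ is an $s\times s$ mixed submatrix, then after specializing $x_i\mapsto 1$ for $i\notin L$ the Jacobian of $\{\tilde f_j\}_{j\in J}$ at $\mathbf 1$ equals $B'_{L,J}$ and is singular ``because its columns sum to zero when multiplied by the all-ones vector (this is forced by the mixed property together with the column structure coming from $\ker A$).'' This is false: the full columns of $B'$ sum to zero because $A$ has a row of ones, but a proper row-submatrix $B'_{L,J}$ inherits no such relation, and mixed square matrices are not singular in general (e.g.\ $\begin{pmatrix}1&-1\\-1&2\end{pmatrix}$). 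So the tangent-cone contradiction does not fire. The actual mechanism in \cite{FS} is different and more elementary: if $B'_{L,J}$ is mixed, then for every $j\in J$ both monomials $x^{v_j^+}$ and $x^{v_j^-}$ are divisible by some variable $x_i$ with $i\in L$, so setting $x_i=0$ for all $i\in L$ annihilates every $f_j$ with $j\in J$; this produces an excess component of the zero locus on a coordinate subspace and contradicts the regular-sequence/height count. Your forward direction is also too loose: from $J\subseteq I_{\mathcal L_B}$, $\operatorname{ht}(J)\le k$, and $\operatorname{ht}(I_{\mathcal L_B})=k$ you cannot conclude $\operatorname{ht}(J)=k$, let alone that $J$ is unmixed; you need an independent argument (again of the ``no mixed submatrix $\Rightarrow$ no spurious coordinate-subspace components'' type) before height considerations become relevant.
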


The following result is a direct consequence of Lemma~\ref{lemma:nominormixed} and Theorem~\ref{th:FS}.

\begin{corollary}\label{cor:ci}
 If $A\in\Z^{(d+1)\times n}$ and $C\in \R^{d\times n}$ are 
 uniform matrices and $B\in\Z^{n\times k}$ is a Gale dual matrix of $A$ such that the lattice ideal $I_{\mathcal{L}_{B}}$ 
is a complete intersection, then there exists a Gale dual  matrix $B'\in\Z^{n\times k}$ of $A$ which satisfies the condition (1) of 
Theorem~\ref{basicmainresult}.
\end{corollary}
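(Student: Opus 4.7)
The proof plan is a direct two-step combination of Theorem~\ref{th:FS} and Lemma~\ref{lemma:nominormixed}; there is essentially no geometric or combinatorial work to be done, and the only point needing a moment of verification is that the dominating matrix produced by Theorem~\ref{th:FS} is indeed a Gale dual matrix of $A$ in the sense used throughout the paper (i.e.\ its columns form a $\Q$-basis of $\ker(A)$).

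First, I would apply Theorem~\ref{th:FS} to the given $B$: since by hypothesis the lattice ideal $I_{\mathcal{L}_B}$ is a complete intersection, this furnishes a dominating integer matrix $B'\in\Z^{n\times k}$ with $\mathcal{L}_{B'}=\mathcal{L}_B$. Note that Theorem~\ref{th:FS} even describes $I_{\mathcal{L}_B}$ explicitly as being generated by the binomials associated with the columns of $B'$, but that extra information is not needed for the corollary.

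Next I would verify that $B'$ is a Gale dual matrix of $A$. Because $A$ has a row of ones by \eqref{eq:overlineA}, every vector of $\ker(A)$ has coordinates summing to zero, so in particular each (nonzero) column of $B'$ is already mixed; combined with the hypothesis that $B'$ is dominating, Proposition~\ref{prop:mixeddominatingli} then gives that the $k$ columns of $B'$ are $\Q$-linearly independent. On the other hand, since $B$ is a Gale dual matrix of $A$, its columns form a $\Q$-basis of $\ker_\Q(A)$, so the $\Q$-span of $\mathcal{L}_B$ equals $\ker_\Q(A)$; as $\mathcal{L}_{B'}=\mathcal{L}_B$, the columns of $B'$ also $\Q$-span $\ker_\Q(A)$, and being $k$ linearly independent vectors in a $k$-dimensional space they form a $\Q$-basis of $\ker(A)$. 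Hence $B'\in\Z^{n\times k}$ is a Gale dual matrix of $A$ with integer entries.

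Finally, since $A$ and $C$ are uniform and $B'$ is a dominating Gale dual matrix of $A$, Lemma~\ref{lemma:nominormixed} applies (the standing condition ${\bf 0}\in\mathcal{C}^\circ$ of Section~\ref{sec:main} is needed for $\Delta_P$ and condition~(1) to be meaningful, and is implicit here) and gives at once that condition~(1) of Theorem~\ref{basicmainresult} holds for $B'$, which is the desired conclusion.
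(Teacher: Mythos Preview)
Your proof is correct and follows exactly the route the paper intends: the paper simply states that the corollary is a direct consequence of Lemma~\ref{lemma:nominormixed} and Theorem~\ref{th:FS}, and you have filled in precisely those two steps. Your extra verification that the dominating matrix $B'$ produced by Theorem~\ref{th:FS} is genuinely a Gale dual of $A$ is a reasonable detail to spell out (and could in fact be shortened, since $\mathcal{L}_{B'}=\mathcal{L}_B$ already forces the $k$ columns of $B'$ to $\Q$-span the $k$-dimensional space $\ker_\Q(A)$, hence to be independent, without invoking Proposition~\ref{prop:mixeddominatingli}).
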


Given $A$, let $B\in\Z^{n\times k}$ a Gale dual matrix of $A$, and consider the lattice $\mathcal{L}_B=\Z B$.
The set of rows of $B$, $\{b_1,\dots,b_n\}\subset \Z^k$ is called a Gale diagram of $\mathcal{L}_B$.  
Any other $\Z$-basis for $\mathcal{L}_B$ yields a Gale diagram, which means that Gale diagrams are unique 
up to transformation by an invertible integer matrix.

The following proposition from~\cite{PeevaSturmfels} relates Gale diagrams with algebraic 
properties of the lattice ideal $\mathcal{L}_{ B}$ when $k=2$:

\begin{prop}[\cite{PeevaSturmfels}, Proposition 4.1]\label{th:latticek=2} Given $A\in\Z^{(d+1)\times (d+3)}$, 
let $B\in\Z^{n\times 2}$ be a Gale dual matrix of $A$.
The lattice ideal $I_{\mathcal{L}_B}$ is not Cohen-Macaulay if and only if it has a Gale diagram which
intersects all the four open quadrants of $\R^2$.
\end{prop}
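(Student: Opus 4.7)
The plan is to compute the projective dimension of $R/I_{\mathcal{L}_B}$, where $R=\R[x_1,\dots,x_n]$, from the Gale diagram $\{b_1,\dots,b_n\}\subset\Z^2$. Since $I_{\mathcal{L}_B}$ has codimension $2$, by the Auslander--Buchsbaum formula $R/I_{\mathcal{L}_B}$ is Cohen--Macaulay if and only if $\mathrm{pd}_R(R/I_{\mathcal{L}_B})=2$, equivalently the third Betti number $\beta_3$ vanishes. The goal is then to show that $\beta_3=0$ if and only if no basis of $\mathcal{L}_B$ gives a Gale diagram whose rows hit all four open quadrants of $\R^2$.

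For the direction ``some Gale diagram hits all four open quadrants $\Rightarrow$ not CM'', suppose rows $b_{i_1}\in Q_1,\ b_{i_2}\in Q_2,\ b_{i_3}\in Q_3,\ b_{i_4}\in Q_4$ of some basis $B'$ cyclically enclose the origin in the Gale plane. I would realize the minimal $\Z^n$-graded free resolution of $R/I_{\mathcal{L}_B}$ via the hull complex of Bayer--Sturmfels (equivalently, the Scarf complex after a generic deformation of $\mathcal{L}_B$). The quadrilateral $b_{i_1}b_{i_2}b_{i_3}b_{i_4}$ lifts to a non-trivial $2$-cycle in the hull complex, producing a non-zero class in $\mathrm{Tor}^R_3(R/I_{\mathcal{L}_B},\R)$, so $\mathrm{pd}_R(R/I_{\mathcal{L}_B})\ge 3$ and $R/I_{\mathcal{L}_B}$ is not Cohen--Macaulay.

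For the converse, assume every Gale diagram of $\mathcal{L}_B$ misses at least one open quadrant. Applying a suitable $\mathrm{GL}(2,\Z)$ change of basis (including $90^\circ$ rotations and integer shears), one may assume the rows of $B'$ lie in the three-quadrant region $\overline{Q_1\cup Q_2\cup Q_3}$ and hence fail to wind around the origin from the $Q_4$ side. The cellular cycle structure of the hull complex then collapses: no $2$-cycle detects a non-trivial enclosure of the origin, so $\beta_3=0$, $\mathrm{pd}_R(R/I_{\mathcal{L}_B})=2$, and $R/I_{\mathcal{L}_B}$ is Cohen--Macaulay.

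The main obstacle will be the explicit homological bookkeeping in the hull complex: one must rigorously correlate ``winding of the Gale diagram around the origin'' with non-triviality of a specific class in $\mathrm{Tor}^R_3$, and conversely show that failing to enclose the origin kills every potentially non-trivial $2$-cycle. A complementary strategy is to combine Theorem~\ref{th:FS}, which characterizes complete intersections via dominating Gale bases, with a direct argument that any codimension-$2$ lattice ideal which is Cohen--Macaulay but not a complete intersection has every Gale diagram confined, up to $\mathrm{GL}(2,\Z)$, to a pair of diagonally opposite open quadrants; this configuration indeed never simultaneously hits all four open quadrants, consistently with the stated equivalence.
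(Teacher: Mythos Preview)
The paper does not prove this proposition; it is quoted verbatim from Peeva--Sturmfels \cite{PeevaSturmfels} and used as a black box. So there is no ``paper's own proof'' to compare against beyond the citation.

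On its own merits, your sketch points toward the right machinery---Peeva and Sturmfels do compute the minimal free resolution of a codimension-$2$ lattice ideal from the combinatorics of the Gale diagram, and the Cohen--Macaulay criterion does reduce to $\beta_3=0$. But what you have written is a plan, not a proof: the sentence ``the quadrilateral \dots\ lifts to a non-trivial $2$-cycle in the hull complex'' and the converse ``the cellular cycle structure then collapses'' are exactly the steps that require work, and you acknowledge this yourself. In Peeva--Sturmfels the argument goes through an explicit description of the syzygies (their chain of degrees along the staircase in the Gale plane), not through a bare winding-number heuristic; to make your version rigorous you would need to reproduce that structure.

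Your ``complementary strategy'' contains an incorrect claim. It is not true that a Cohen--Macaulay, non-complete-intersection codimension-$2$ lattice ideal has every Gale diagram confined to a pair of diagonally opposite open quadrants. A Gale diagram that misses, say, $Q_4$ can still occupy $Q_1\cup Q_2\cup Q_3$; such configurations yield CM ideals that are typically not complete intersections (the dominating criterion of Theorem~\ref{th:FS} fails once three quadrants are hit). So this alternative route, as stated, does not close the equivalence.
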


The following result follows from Proposition~\ref{th:latticek=2} and Lemma~\ref{lemma:4quadrants}.

\begin{prop}\label{prop:nCM} Given $A\in\Z^{(d+1)\times (d+3)}$ uniform, let $B\in\Z^{n\times 2}$ 
be a Gale dual matrix of $A$. Suppose that the lattice ideal $I_{\mathcal{L}_B}$ is not 
Cohen-Macaulay and let $B'$ be any other Gale Dual matrix of $A$ such that the columns $B'_1, B'_2$
of $B'$ form a $\Z$-basis of $\mathcal{L}_B$ and such that the corresponding Gale diagram 
$\{b'_1,\dots,b'_n\}$ intersects all the four open quadrants of $\R^2$. Let $b'_{i_j}$, 
with $1\leq j \leq 4$,  be rows of $B'$ each lying in the interior of a different
open quadrant in $\R^2$.
 Let $C\in\R^{d\times n}$ uniform satisfying ${\bf 0}\in {\mathcal C}^{\circ}$. Assume moreover
  that given a Gale dual matrix of $C$, the row vectors $P_{i_1}, \dots, P_{i_4}$ define
 normals to facets of the closure of the cone ${\mathcal C}_P^\nu$ in~\eqref{eq:Cnu}.
 
 Then, if $\sign((B'_j)_{I_C})\in\sign((\ker (C))_{I_C})$ for $j=1,2$, then $n_{\mathcal{A}}(C)>0$.
\end{prop}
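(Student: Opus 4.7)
The plan is to reduce the statement to a direct application of Lemma~\ref{lemma:4quadrants}, using Proposition~\ref{th:latticek=2} only to make sense of the choice of $B'$ in the hypothesis.

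First I would verify that $B'$ is in fact a Gale dual matrix of $A$ in the sense used throughout the paper. Although the lattice $\mathcal{L}_B=\Z B$ may be a strict sublattice of $\ker_\Z(A)$, its $\Q$-span coincides with $\ker_\Q(A)$, so any $\Z$-basis $\{B'_1,B'_2\}$ of $\mathcal{L}_B$ is also a $\Q$-basis, hence an $\R$-basis, of $\ker_\R(A)$. Consequently $B'\in\Z^{n\times 2}$ is a matrix of maximal rank whose columns form a basis of $\ker(A)$, and so it qualifies as a Gale dual matrix in the sense of Section~\ref{sec:back}.

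Second, I would invoke Proposition~\ref{th:latticek=2} to note that the hypothesis is nonvacuous: since $I_{\mathcal{L}_B}$ is not Cohen-Macaulay, there exists a Gale diagram of $\mathcal{L}_B$ (equivalently, a $\Z$-basis of $\mathcal{L}_B$) whose rows meet all four open quadrants of $\R^2$. The matrix $B'$ in the statement is exactly such a choice, and thus indices $i_1,i_2,i_3,i_4\in\{1,\dots,n\}$ with $b'_{i_j}$ in the $j$-th open quadrant are guaranteed to exist.

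Finally, I would check that all hypotheses of Lemma~\ref{lemma:4quadrants} are in place with $B$ replaced by $B'$: $A$ is uniform of the required size, $B'$ is a Gale dual matrix of $A$ whose rows $b'_{i_1},\dots,b'_{i_4}$ realize all four open quadrants, $C$ is uniform with $\mathbf{0}\in\mathcal{C}^\circ$, the vectors $P_{i_1},\dots,P_{i_4}$ are assumed to be inward normals of facets of $\overline{\mathcal{C}}_P^{\nu}$, and the sign condition $\sign((B'_j)_{I_C})\in\sign((\ker C)_{I_C})$ for $j=1,2$ holds by hypothesis. Lemma~\ref{lemma:4quadrants} then yields $n_{\mathcal{A}}(C)>0$. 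There is no substantial obstacle here; the proof is essentially a packaging of Proposition~\ref{th:latticek=2} (to justify the existence of the $\Z$-basis of $\mathcal{L}_B$ that hits every open quadrant) with Lemma~\ref{lemma:4quadrants} (which does all the actual degree-theoretic work through Theorem~\ref{basicmainresult}).
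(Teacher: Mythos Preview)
Your proposal is correct and matches the paper's approach: the paper simply states that the result follows from Proposition~\ref{th:latticek=2} and Lemma~\ref{lemma:4quadrants}, and you have unpacked exactly that reduction, verifying that $B'$ is a genuine Gale dual of $A$ and that all hypotheses of Lemma~\ref{lemma:4quadrants} are met.
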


\subsection{Real solutions}\label{ssec:reals}

When $A$ has integer entries,~\eqref{E:system} is a system of Laurent polynomials with real coefficients, which are defined
over the real torus $(\R^*)^d$. 
In this subsection, we are interested on the existence of real  solutions of ~\eqref{E:system} 
with nonzero coordinates for integer matrices $A$ of
exponents.  Our main result is Theorem~\ref{thm:positive impliesreal}.

Given any $s=(s_1,\ldots,s_d) \in \Z^d$, denote by $\R^d_{s}$ the orthant 
$$\R^d_{s} \, = \, \{x \in \R^d \, : \, (-1)^{s_i}x_i >0, i=1,\ldots,d\}.$$
In particular, $\R^d_{s}=\R_{>0}^d$ if $s \in 2\Z^d$.
Let $x \in (\R^*)^d$ be a solution of ~\eqref{E:system}. Then $x \in \R^d_s$ for some 
$s  \in \Z^d$ (which is unique up to adding a vector in $2\Z^d$).
Setting $z_i=(-1)^{s_i}x_i$, we get that $z=(z_1,\ldots,z_d)$ is a positive solution of the 
system with exponent matrix $A$ and coefficient matrix $C_s$ defined by $(C_s)_{ij}= (-1)^{\langle s, a_j\rangle}c_{ij}$. 
Moreover, if $D$ is a Gale dual matrix of $C$, then the matrix $D_s$ defined by
$(D_s)_{ij}=(-1)^{\langle s, a_i\rangle}d_{ij}$ is a Gale dual matrix of $C_s$. Denote by $P_{i,s}$ the $i$-th row vector of $D_s$.
Thus $P_{i,0}=P_i$ ($i$-th row of $D$) and $P_{i,s}=(-1)^{\langle s, a_i \rangle}P_{i}$, $i=1,\ldots,n$. Denote by
${{\mathcal C}}_{P_s}$ the positive cone  generated by $P_{i,s}$ 
for $i=1,\ldots,n$.
%

Let ${\mathcal M}_P$ denote  the complement in $\R^{k+1}$ of the hyperplane arrangement given 
by the hyperplanes $\{y \in \R^{k+1} \, : \, \langle P_i, y \rangle=0\}$, $i=1,\ldots,n$. 
For any $\varepsilon \in \Z^{n}$ denote by  ${\mathcal C}^{\nu}_{\varepsilon}$ the connected component of 
${\mathcal M}_P$ defined by 
$${\mathcal C}_{\varepsilon}^\nu \, = \, \{ y \in \R^{k+1} \, : \, (-1)^{\varepsilon_i} \langle P_i,y \rangle >0, \, i=1,\ldots, n\}.$$
 Note that ${\mathcal C}_0^\nu={\mathcal C}_{P}^{\nu}$.
 
Write $A'$ for the matrix with column vectors $a_1,\ldots,a_{n}$ ($A'$ is obtained by removing the first row of $A$). 
 It is convenient to introduce the map
$\psi:  \Z^{d} \rightarrow  \Z^{1 \times n}$ defined by 
\begin{equation}\label{def:psimap}
\psi(s)=s \cdot A',
\end{equation}
  where $s \in \Z^{d}$ is considered a
 row vector, i.e, an element of $\Z^{1 \times d}$. 
Then, for any integer vector $b \in \ker(A)$ we have:
$$\prod_{i=1}^{n}\langle P_{i,s}, y \rangle^{b_i}= (-1)^{\langle s, A'b \rangle} \prod_{i=1}^{n} 
\langle P_i, y \rangle^{b_i}=\prod_{i=1}^{n} \langle P_i, y \rangle^{b_i}.$$
Thus, applying Theorem \ref{Galesystems} to the system with coefficient matrix $C_s$ and exponent matrix $A$,
we obtain that the real solutions of ~\eqref{E:system} contained in the orthant $\R^d_s$ are in bijection 
with the solutions of \eqref{homogeneous Gale system} in the quotient 
$\P{\mathcal C}^{\nu}_{\psi(s)}$ of the open cone $\mathcal C_{\psi(s)}^\nu$
by the equivalence relation $\sim$   defined in Section~\ref{sec:back}:
$y \sim y'$ if and only if there exists $\alpha >0$ 
such that $y= \alpha y'$.
In fact, such a bijection is given by the map which associates to any solution
$x \in \R^d_{s}$ of the system~\eqref{E:system} the unique $y\in \R^{k+1}$ such that $x^{a_i}=\langle P_i, y \rangle$ for $i=1,\ldots,n$.
We have proven the following result:

\begin{prop}\label{Galesystemsreal}
For any $s \in \Z^d$, there is a bijection between the real solutions of~\eqref{E:system} contained in $\R_s^d$
and the solutions of~\eqref{homogeneous Gale system} in $\P{\mathcal C}^{\nu}_{\psi(s)}$. This induces
a bijection between the solutions of
~\eqref{E:system} in $\R_s^d$ and the solutions of  \eqref{homogeneous Gale system} 
in $\Delta_{P_s}={\mathcal C}^{\nu}_{\psi(s)} \cap \{y_0=1\}$ when $(1,0,\ldots,0)$ lies in the closure of the cone ${{\mathcal C}}_{P_s}$.
\end{prop}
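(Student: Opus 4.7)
The plan is to reduce the statement to Theorem~\ref{Galesystems} via the orthant-changing substitution already indicated in the preceding paragraph. Concretely, for a fixed $s \in \Z^d$, I would introduce the diffeomorphism $\R_s^d \to \R_{>0}^d$ given by $z_i = (-1)^{s_i} x_i$, and observe that under this change of variables one has $x^{a_j} = (-1)^{\langle s, a_j\rangle}\, z^{a_j}$ for each $j=1,\ldots,n$. Substituting into~\eqref{E:system} shows that $x \in \R_s^d$ is a solution of~\eqref{E:system} if and only if $z \in \R_{>0}^d$ is a positive solution of the system with exponent matrix $A$ and coefficient matrix $C_s$ defined by $(C_s)_{ij}=(-1)^{\langle s, a_j\rangle} c_{ij}$.

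Next I would verify that $D_s$, obtained from $D$ by multiplying its $i$-th row by $(-1)^{\langle s, a_i\rangle}$, is a Gale dual of $C_s$. This is a straightforward computation: $C_s D_s = \mathrm{diag}((-1)^{\langle s, a_j\rangle}) \cdot C\, D \cdot \mathrm{diag}((-1)^{\langle s, a_i\rangle}) = 0$, and $D_s$ has the same rank as $D$ since the sign-scaling is invertible. By definition the row vectors of $D_s$ are precisely the $P_{i,s}$, so the open cone $\mathcal{C}_{P_s}^{\nu}$ coincides with $\mathcal{C}^{\nu}_{\psi(s)}$, and the associated polytope with $\Delta_{P_s} = \mathcal{C}^{\nu}_{\psi(s)} \cap \{y_0 = 1\}$.

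I would then apply Theorem~\ref{Galesystems} to the system with coefficient matrix $C_s$ and Gale duals $B$, $D_s$: this gives a bijection between positive solutions and solutions of the associated Gale dual system~\eqref{homogeneous Gale system} (with the $P_{i,s}$ replacing the $P_i$) in $\P \mathcal{C}^{\nu}_{P_s}$, and, under the bounded-ness hypothesis $(1,0,\ldots,0) \in \overline{\mathcal{C}}_{P_s}$, a bijection with the solutions in $\Delta_{P_s}$. The main verification is that the modified Gale dual system is, monomial by monomial, identical to the original one in the $y$ variables: for each column $b_j=(b_{1j},\ldots,b_{nj})$ of $B$ one has $A b_j = 0$, hence $\sum_i b_{ij} = 0$ and $A' b_j = 0$, so $\langle s, A' b_j\rangle = 0$, and therefore
\[
\prod_{i=1}^{n}\langle P_{i,s}, y\rangle^{b_{ij}} = (-1)^{\langle s, A' b_j\rangle}\prod_{i=1}^{n}\langle P_i, y\rangle^{b_{ij}} = \prod_{i=1}^{n}\langle P_i, y\rangle^{b_{ij}}.
\]
Composing the two bijections yields the desired statement.

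The only subtlety, which I do not anticipate to be a real obstacle, is keeping track of the identifications $\mathcal{C}_{P_s}^{\nu} = \mathcal{C}^{\nu}_{\psi(s)}$ and checking that the condition $(1,0,\ldots,0) \in \overline{\mathcal{C}}_{P_s}$ is exactly what is needed so that the projectivization map $(y_0,\ldots,y_k) \mapsto (1, y_1/y_0, \ldots, y_k/y_0)$ restricts to a bijection from $\P \mathcal{C}^{\nu}_{\psi(s)}$ to $\Delta_{P_s}$, exactly as in Remark~\ref{proj}. Once those identifications are in place, the proof is essentially an application of Theorem~\ref{Galesystems} to the auxiliary system~$f_i(z) = \sum_j (C_s)_{ij} z^{a_j} = 0$ together with the sign cancellation displayed above.
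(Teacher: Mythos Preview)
Your proposal is correct and follows essentially the same route as the paper: the paper's argument (given in the discussion immediately preceding the proposition) performs the substitution $z_i=(-1)^{s_i}x_i$, passes to the system with coefficient matrix $C_s$ and Gale dual $D_s$, uses $A'b=0$ to see that the Gale dual system is unchanged, and then invokes Theorem~\ref{Galesystems}. One small slip: your factorization $C_s D_s = \mathrm{diag}(\cdots)\, C D\, \mathrm{diag}(\cdots)$ has the diagonal matrices on the wrong sides (in fact $C_s = C\cdot\mathrm{diag}$ and $D_s = \mathrm{diag}\cdot D$, so $C_s D_s = C\cdot\mathrm{diag}^2\cdot D = CD = 0$), but the conclusion is unaffected.
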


If $M$ is any matrix or vector with integer entries, we denote by $[M]_2$ the matrix or vector 
with coefficients in the field $\Z / 2 \Z$ obtained by taking the image of each entry by the 
quotient map $\Z \rightarrow \Z / 2 \Z$.  
Note that the following relation between the ranks holds: ${\rm rk}([A]_2)={\rm rk}([A']_2)$  if $[(1,1,\ldots,1)]_2$
belongs to the row span of $[A']_2$ and ${\rm rk}([A]_2)={\rm rk}([A']_2)+1$ otherwise.
The following result is straightforward.

\begin{lemma}\label{L:image}
For any $s,s' \in \Z^d$, we have
${\mathcal C}^{\nu}_{\psi(s)}={\mathcal C}^{\nu}_{\psi(s')}$ if and only if $[s'-s]_2$ belongs to the left kernel of $[A']_2$.
For each $s \in \Z^d$, there are  $2^{d-\rk([A']_2)}$ distinct orthants $\R^d_{s'}$ such that 
${\mathcal C}^{\nu}_{\psi(s)}={\mathcal C}^{\nu}_{\psi(s')}$.
\end{lemma}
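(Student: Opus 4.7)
My plan is to reduce the whole statement to linear algebra modulo $2$. First I would observe that the defining inequalities of $\mathcal{C}^{\nu}_{\psi(s)}$ involve $\psi(s)$ only through the signs $(-1)^{\psi(s)_{i}}$, so the cone depends only on the class $[\psi(s)]_{2}=[s]_{2}\cdot[A']_{2}\in(\Z/2\Z)^{n}$. This handles the ``if'' direction of the first equivalence at once: if $[s'-s]_{2}$ lies in the left kernel of $[A']_{2}$, then $[\psi(s)]_{2}=[\psi(s')]_{2}$ and the two cones are defined by the same inequalities.

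For the converse I would use a witness. Whenever $\mathcal{C}^{\nu}_{\psi(s)}=\mathcal{C}^{\nu}_{\psi(s')}$ is nonempty, any $y$ in it satisfies $\sign\langle P_{i},y\rangle=(-1)^{\psi(s)_{i}}=(-1)^{\psi(s')_{i}}$ for every $i$, forcing $[\psi(s)]_{2}=[\psi(s')]_{2}$, i.e.\ $[s'-s]_{2}\cdot[A']_{2}=0$. Nonemptiness at $s=0$ is given by the standing hypothesis $\mathbf{0}\in\mathcal{C}^{\circ}$, which gives $\mathcal{C}^{\nu}_{\psi(0)}=\mathcal{C}^{\nu}_{P}\neq\emptyset$, and the same witness argument applies at any other nonempty coincidence. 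I expect the main subtlety to be a clean treatment of the empty-cone case, which I would address by noting that each chamber of $\mathcal{M}_{P}$ is uniquely indexed by its sign vector, so the map $\varepsilon\mapsto\mathcal{C}^{\nu}_{\varepsilon}$ is injective on realizable sign vectors and the equivalence of cones translates into equality of sign vectors on the image of $\psi$.

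Once the first equivalence is in hand, the second statement reduces to counting cosets. Orthants $\R^{d}_{s'}$ are in bijection with classes $[s']_{2}\in(\Z/2\Z)^{d}$, and the condition $\mathcal{C}^{\nu}_{\psi(s)}=\mathcal{C}^{\nu}_{\psi(s')}$ cuts out the coset $[s]_{2}+\ker_{\mathrm{left}}[A']_{2}\subset(\Z/2\Z)^{d}$; by rank-nullity this has cardinality $2^{d-\rk([A']_{2})}$.

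Finally, the image of $\Z^{d}$ under $s\mapsto\mathcal{C}^{\nu}_{\psi(s)}$ is in bijection with $(\Z/2\Z)^{d}/\ker_{\mathrm{left}}[A']_{2}$, which has cardinality $2^{d}/2^{d-\rk([A']_{2})}=2^{\rk([A']_{2})}$. Each value in the image is a connected component of $\mathcal{M}_{P}$ because the chambers of the complement of the arrangement $\{\langle P_{i},y\rangle=0\}_{i=1}^{n}$ are exactly the nonempty fibers of the sign map $y\mapsto(\sign\langle P_{1},y\rangle,\dots,\sign\langle P_{n},y\rangle)$.
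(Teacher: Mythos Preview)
The paper gives no proof of this lemma; it is simply labeled ``straightforward.'' Your reduction to linear algebra over $\Z/2\Z$ via the factorization $s\mapsto [s]_2\mapsto [\psi(s)]_2\mapsto \mathcal C^\nu_{\psi(s)}$ is exactly the intended argument, and the counting of orthants and of components by rank--nullity is correct once the first equivalence is established.

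There is one genuine gap, which you yourself flag but do not close. In the ``only if'' direction you argue via a witness point when $\mathcal C^\nu_{\psi(s)}$ is nonempty, and then try to dispose of the empty case by saying that $\varepsilon\mapsto \mathcal C^\nu_\varepsilon$ is injective on \emph{realizable} sign vectors. That is true, but it does not help: nothing in the standing hypotheses guarantees that every $[\psi(s)]_2$ is realizable. The assumption $\mathbf 0\in\mathcal C^\circ$ ensures $\mathcal C^\nu_{\psi(0)}=\mathcal C^\nu_P\neq\emptyset$, but for $s\neq 0$ the sign-flipped system with coefficient matrix $C_s$ need not satisfy $\mathbf 0\in\mathcal C_s^\circ$, and then $\mathcal C^\nu_{\psi(s)}$ can be empty. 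If two distinct classes $[\psi(s)]_2\neq[\psi(s')]_2$ both give the empty cone, then $\mathcal C^\nu_{\psi(s)}=\mathcal C^\nu_{\psi(s')}=\emptyset$ while $[s'-s]_2\notin\ker_{\mathrm{left}}[A']_2$, and your argument does not rule this out. Likewise, the final count ``$2^{\rk([A']_2)}$ connected components of $\mathcal M_P$'' tacitly assumes none of these cones is empty.

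The cleanest fix is to read the lemma at the level of sign vectors rather than sets: replace the equality $\mathcal C^\nu_{\psi(s)}=\mathcal C^\nu_{\psi(s')}$ by $[\psi(s)]_2=[\psi(s')]_2$ throughout (this is how the lemma is actually used in Proposition~\ref{twocases} and Theorem~\ref{thm:positive impliesreal}, where only cones containing a solution---hence nonempty---matter). With that reading your proof is complete and matches what the paper presumably has in mind.
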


Recall that since $A$ contains a row of ones, each polynomial in \eqref{homogeneous Gale system} 
is homogeneous of degree $0$, which implies the following fact.

\begin{lemma} \label{pair}
For any $\varepsilon \in \Z^{n}$, the map $y \mapsto -y$ induces a bijection between the solutions 
of \eqref{homogeneous Gale system} in ${\mathcal C}^\nu_{\varepsilon}$ and the  solutions of 
\eqref{homogeneous Gale system} in ${\mathcal C}^{\nu}_{\varepsilon+(1,1,\ldots,1)}$.
\end{lemma}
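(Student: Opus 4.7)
The plan is to exploit the homogeneity of degree zero of the Gale system, which was already noted just before the lemma statement, combined with the involutive nature of the map $y \mapsto -y$.

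First I would verify that the map $y \mapsto -y$ sends the open cone ${\mathcal C}^{\nu}_{\varepsilon}$ bijectively onto ${\mathcal C}^{\nu}_{\varepsilon+(1,1,\ldots,1)}$. This is immediate from the definition of these cones: for each $i$, $(-1)^{\varepsilon_i}\langle P_i, -y\rangle = -(-1)^{\varepsilon_i}\langle P_i, y\rangle = (-1)^{\varepsilon_i+1}\langle P_i, y\rangle$, so the defining inequalities of one cone are exchanged with those of the other. Since $y \mapsto -y$ is an involution, it is automatically a bijection between the two cones.

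Next I would check that this map sends solutions of \eqref{homogeneous Gale system} to solutions of the same system. For any $y$ with all $\langle P_i,y\rangle \neq 0$, and any integer exponent $b_{ij}$, we have $\langle P_i,-y\rangle^{b_{ij}} = (-1)^{b_{ij}}\langle P_i,y\rangle^{b_{ij}}$ (valid for negative exponents as well, since $(-1)^{-n}=(-1)^n$). Therefore
\[
\prod_{i=1}^{n}\langle P_i,-y\rangle^{b_{ij}} \;=\; (-1)^{\sum_{i=1}^{n} b_{ij}}\prod_{i=1}^{n}\langle P_i,y\rangle^{b_{ij}}.
\]
Because $B$ is Gale dual to $A$ and $A$ has a row of ones, the entries of each column of $B$ sum to zero, so $\sum_{i} b_{ij}=0$ for every $j$ and the sign factor equals $1$. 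Hence $G_j(-y)=G_j(y)$ for $j=1,\ldots,k$, and the equations $G_j(y)=1$ are preserved. Combined with the first step, this produces the claimed bijection. I do not foresee a genuine obstacle here; the only subtlety worth flagging explicitly is the integrality of the exponents $b_{ij}$, which is why one works with $\Z$-Gale dual matrices as the paper declared at the beginning of \S\ref{ssec:reals}.
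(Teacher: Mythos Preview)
Your proof is correct and matches the paper's approach exactly: the paper does not give a separate proof but simply notes that the lemma follows from the degree-zero homogeneity of the Gale system (equivalently, $\sum_i b_{ij}=0$), which is precisely the computation you carry out. Your explicit flagging of the integrality of the $b_{ij}$ is appropriate and consistent with the paper's standing assumption in \S\ref{ssec:reals} that $B$ is a $\Z$-Gale dual of $A$.
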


Choose a $\Z$-Gale dual matrix $B \in \Z^{n \times k}$  of $A$ and 
consider the Gale dual system~\eqref{homogeneous Gale system} it defines, for a given Gale dual matrix $D$ 
of a full rank matrix $C$. Since $B$ is a $\Z$-Gale dual matrix of $A$, we have $[A]_{2} \cdot [B]_{2}=0$ and furthermore the column vectors of 
$[B]_{2}$ are linearly independent over $\Z / 2\Z$ because the greatest common divisor of the maximal 
minors of $B$ is equal to $1$. Therefore, $[B]_{2}$ is a Gale dual matrix of $[A]_{2}$ if $\rk([A]_2)=d+1$.
If the corank $c=d+1-\rk([A]_2)$ is positive, then a Gale matrix of $[A]_{2}$ can be obtained by adding $c$ column vectors to
$[B]_{2}$.

Define the \emph{feasible} set $\Omega$ as follows: 
$$\Omega=\{\varepsilon \in \Z^n  \, :  \, \langle [\varepsilon]_{2}, \beta \rangle=0 \mbox{\, for  all \,} \beta \in \mbox{Ker} ([A]_{2})\}.$$
Equivalently, $\Omega$ is the set of all $\varepsilon \in \Z^n$ such that $[\varepsilon]_{2}$ belongs to the left kernel of any Gale dual matrix of $[A]_{2}$.

\begin{prop}\label{twocases}
Let $\varepsilon \in \Z^{n}$. If $\varepsilon=\psi(s)$ for some $s \in \Z^d$, then  $\varepsilon \in \Omega$. 
Conversely, assume that \eqref{homogeneous Gale system} has a solution in ${\mathcal C}_{\varepsilon}^\nu$.
If $\rk([A]_2)=d+1$ then $\varepsilon \in \Omega$. Assume
that $\varepsilon \in \Omega$ if $c=d+1-\rk([A]_2)>0$. Then, the following holds.
\begin{enumerate} 
\item If $\rk([A]_2)=\rk([A']_2)$,  there exists $s \in \Z^{d}$ such that $[\varepsilon]_2= [\psi(s)]_2$.
\item If $\rk([A]_2)>\rk([A']_2)$, either there exists an integer vector $s \in \Z^{d}$ such that $[\varepsilon]_2= [\psi(s)]_2$,
or there exists $s \in \Z^d$  such that $[\varepsilon]_2+[(1,1,\ldots,1)]_2= [\psi(s)]_2$. 
Moreover, there do not exist $s,s' \in \Z^{d}$ such that $[\psi(s')]_2=[(1,1,\ldots,1)]_2+[\psi(s)]_2$, 
so that only one of the two previous cases occurs.
\end{enumerate}
\end{prop}

\begin{proof}
Let $y$ be a solution of \eqref{homogeneous Gale system} such that $y  \in {\mathcal C}^{\nu}_{\varepsilon}$. Then, there exist
positive real numbers $d_{i}$ such that $\langle P_i,y \rangle = (-1)^{\varepsilon_i}d_{i}$ for $i=1,\ldots,n$ and 
using \eqref{homogeneous Gale system} this gives that $\sum_{i=1}^{n} \varepsilon_i b_{ij}$ is an even integer
 number for $j=1,\ldots,k$. Therefore, $[\varepsilon]_{2}$ belongs to the left kernel of $[B]_{2}$, which means 
 that $\epsilon \in \Omega$ when $\rk([A]_2)=d+1$. Assume that $\varepsilon \in \Omega$ if $c=d+1-\rk([A]_2)>0$. 
 Then, we get that $[\varepsilon]_{2}$ belongs to the left kernel of any Gale dual matrix $B_{2}$ for $[A]_{2}$.
This left kernel is the image of the map
 $\Z^{d+1} \rightarrow  {(\Z/ 2\Z)}^{1 \times n}$ 
sending $(s_{0},s_{1},\ldots,s_{d})$ to $[(s_{0},s_{1},\ldots,s_{d})]_2 \cdot [A]_2=[s_{0}(1,1,\ldots,1)]_2+[\psi(s)]_2$, 
where $s=(s_{1},\ldots,s_{d})$. The image of this map is equal to the image of the map $s \mapsto [\psi(s)]_2$ 
precisely when $\rk([A]_2)=\rk([A']_2)$. Thus item (1) and the first part of item (2) are proved. We also get that if
$\epsilon=\psi(s)$ then $[\epsilon]_{2}$ belongs to the left kernel of $B_{2}$, and thus $\varepsilon \in \Omega$.
Finally, note that if $\rk([A]_2)>\rk([A']_2)$ then there do not exist distinct $s,s' \in \Z^{1 \times (d)}$ such that $[\psi(s')]_2=[(1,1,\ldots,1)+[\psi(s)]_2$  for otherwise $[(1,\ldots,1)]_2$ would belong to the row span of $[A']_2$.
\end{proof}

\begin{example}
If $a_{1},\ldots,a_{n} \in 2\Z^{d}$, then $\rk([A']_2)=0$ and $\rk([A]_2)=1$.
Moreover, the number of real solutions of ~\eqref{E:system} is $2^d$ times its number of positive solutions. This number 
is equal to the number of solutions of \eqref{homogeneous Gale system} 
in the quotient cone $\P{\mathcal C}^{\nu}_0=\P{\mathcal C}_{P}^{\nu}$  by Theorem \ref{Galesystems}.
\end{example}

As a direct consequence of Proposition~\ref{Galesystemsreal}, Proposition~\ref{twocases}, Lemma~\ref{L:image}
and Lemma~\ref{pair}, we get the following result.

\begin{thm}\label{thm:positive impliesreal}
If $\rk([A]_2)=d+1$, then there exists a solution of~\eqref{E:system}  in $(\R^*)^d$
 if and only if there exists a solution of~\eqref{homogeneous Gale system} in ${\mathcal M}_P$.
 More generally, when $\rk[A]_2$ is not necessarily equal to $d+1$, there exists a solution of~\eqref{E:system}  in $(\R^*)^d$
if and only if there exists a solution of~\eqref{homogeneous Gale system} in a cone ${\mathcal C}_{\varepsilon}^\nu$ with
$\varepsilon \in \Omega$. 

Moreover, the number of solutions of~\eqref{E:system}  in $(\R^*)^d$ is equal to
$2^{d-\rk([A]_2)}$ times the sum of the number of solutions of~\eqref{homogeneous Gale system} in the 
quotient cones $\P{\mathcal C}^{\nu}_{\varepsilon}$,  over all $\varepsilon \in \Omega$.

 \end{thm}
 
Recall that if $\varepsilon \in \Omega$, then $\varepsilon +(1,\ldots,1) \in \Omega$ because $(1,\dots,1)$ is assumed to be in the row span of $A$ (in fact,
we are assuming that this is a row of $A$).
Thus, the total number of solutions
of~\eqref{homogeneous Gale system} in the quotient cones $\P{\mathcal C}^{\nu}_{\varepsilon}$ over all 
$\varepsilon \in \Omega$ is even due to Lemma~\ref{pair} (the solutions come in pairs of opposite real numbers). 

Therefore, the number of solutions of~\eqref{E:system} in $(\R^*)^d$ is always even if $\rk([A]_2) \leq d$ by Theorem~\ref{thm:positive impliesreal}.
But note that when $\rk([A]_2)=d+1$,  then $2^{d-\rk([A]_2)} = \frac 1 2$,  and so  in this case 
 the number of solutions of~\eqref{E:system} in $(\R^*)^d$ is equal to half the sum of the number of solutions 
 of~\eqref{homogeneous Gale system} in the quotient cones $\P{\mathcal C}^{\nu}_{\varepsilon}$  over all $\varepsilon \in \Z^n$. 
 In fact, by the second item in Proposition \ref{twocases}, when $\rk([A]_2)=d+1$  there is a bijection between the 
 solutions of~\eqref{E:system} in $(\R^*)^d$ and the solutions of~\eqref{homogeneous Gale system} outside the 
 hyperplanes $\{y \in \R^{k+1} \, : \, \langle P_i, y \rangle=0\}$ in the real projective space $\R P^{k}$. 
 The latter result is a consequence of~\cite[Theorem 2.1]{BS08}.

%
%

Given $A$ and $C$ and a choice of Gale dual matrices $B,D$, we saw in the proof of Theorem~\ref{basicmainresult}, 
that under the hypotheses of the theorem, it follows from  Theorem~\ref{Galesystems} that $n_{\mathcal A}(C) >0$ is indeed
equivalent to the existence of a solution to~\eqref{nothomogeneous Gale system} 
in  $\Delta_P$.
In the previous sections, we have given different sufficient conditions on $D$ and $B$ such that  system 
\eqref{nothomogeneous Gale system} has at least one solution in $\Delta_P$. 
When $A$ has integer entries it is then enough to check if these sufficient conditions are satisfied by $B$ and 
any matrix $D_{\varepsilon}$ obtained by multipliying the $i$-th row of $D$ by $(-1)^{\varepsilon_i}$ for 
some  $\varepsilon \in  \Omega$. In this case,  \eqref{E:system}  has at least one solution
 in $(\R^*)^d$ by  Theorem \ref{thm:positive impliesreal}.

\section*{Acknowledgement}
We are grateful to Alessandro A. Grande for raising a question
that lead to the improvement of Section~\ref{sec:main}.

\providecommand{\bysame}{\leavevmode\hbox to3em{\hrulefill}\thinspace}
\providecommand{\MR}{\relax\ifhmode\unskip\space\fi MR }
\providecommand{\MRhref}[2]{%
  \href{http://www.ams.org/mathscinet-getitem?mr=#1}{#2}
}
\providecommand{\href}[2]{#2}

\end{document}